\numberwithin{equation}{section}
\newtheorem{Theorem}{Theorem}[section]
\newtheorem*{Theorem*}{Theorem}
\newtheorem{Corollary}[Theorem]{Corollary}
\newtheorem{Lemma}[Theorem]{Lemma}
\newtheorem{Proposition}[Theorem]{Proposition}
 { \theoremstyle{definition}

\newtheorem{Remark}[Theorem]{Remark} }
\begin{document}
%\allowdisplaybreaks

\newcommand{\arXivNumber}{2106.06764}

\renewcommand{\PaperNumber}{010}

\FirstPageHeading

\ShortArticleName{Relationships Between Hyperelliptic Functions of Genus 2 and Elliptic Functions}

\ArticleName{Relationships Between Hyperelliptic Functions\\ of Genus 2 and Elliptic Functions}

\Author{Takanori AYANO~$^{\rm a}$ and Victor M.~BUCHSTABER~$^{\rm b}$}

\AuthorNameForHeading{T.~Ayano and V.M.~Buchstaber}

\Address{$^{\rm a)}$~Osaka City University, Advanced Mathematical Institute, \\
\hphantom{$^{\rm a)}$}~3-3-138 Sugimoto, Sumiyoshi-ku, Osaka, 558-8585, Japan}
\EmailD{\href{mailto:ayano@sci.osaka-cu.ac.jp}{ayano@sci.osaka-cu.ac.jp}}
\URLaddressD{\url{https://researchmap.jp/ayano75?lang=en}}

\Address{$^{\rm b)}$~Steklov Mathematical Institute of Russian Academy of Sciences,\\
\hphantom{$^{\rm b)}$}~8 Gubkina Street, Moscow, 119991, Russia}
\EmailD{\href{mailto:buchstab@mi-ras.ru}{buchstab@mi-ras.ru}}

\ArticleDates{Received June 15, 2021, in final form January 20, 2022; Published online February 01, 2022}

\Abstract{The article is devoted to the classical problems about the relationships bet\-ween elliptic functions and hyperelliptic functions of genus~2. It contains new results, as well as a~derivation from them of well-known results on these issues. Our research was moti\-vated by applications to the theory of equations and dynamical systems integrable in hyperelliptic functions of genus~2. We~consider a hyperelliptic curve~$V$ of genus~2 which admits a~morphism of degree 2 to an elliptic curve. Then there exist two elliptic curves $E_i$, $i=1,2$, and morphisms of degree 2 from $V$ to $E_i$. We~construct hyperelliptic functions associated with~$V$ from the Weierstrass elliptic functions associated with~$E_i$ and describe them in terms of the fundamental hyperelliptic functions defined by the logarithmic derivatives of the two-dimensional sigma functions. We~show that the restrictions of hyperelliptic functions associated with $V$ to the appropriate subspaces in $\mathbb{C}^2$ are elliptic functions and describe them in terms of the Weierstrass elliptic functions associated with $E_i$. Further, we express the hyperelliptic functions associated with $V$ on $\mathbb{C}^2$ in terms of the Weierstrass elliptic functions associated with $E_i$. We~derive these results by describing the homomorphisms between the Jacobian varieties of the curves $V$ and $E_i$ induced by the morphisms from $V$ to $E_i$ explicitly.}

\Keywords{hyperelliptic function; elliptic function; sigma function; reduction of hyperelliptic functions; Jacobian variety of an algebraic curve}

\Classification{14H40; 14H42; 14K25; 32A20; 33E05}

\begin{flushright}
\begin{minipage}{73mm}
\it Dedicated to the memory of the remarkable\\ mathematician Victor Enolski $($1945--2019$)$
\end{minipage}
\end{flushright}

\section{Introduction}

Victor Enolski's contributions to mathematics, mathematical and theoretical physics are reflected in the memorial survey \cite{Pre}.
Our article is devoted to the issues that were in the center of his attention; it contains our new results on these issues
and a description of their connection with classical and relatively recent results, including the results obtained by Victor Enolski together with his co-authors.

The problem whether the Jacobian variety of a hyperelliptic curve of genus 2 is isogenous to the direct product of elliptic curves is considered in many papers of number theory
(e.g., \cite{BCMS2021, B-L-S-2013,BD, Cassels, F2, FK, Ku, P,J.P.Serre-2020, S, W}).
The problems discussed in our article are related to the well-known open problem about rational points on the Jacobian variety of a curve of genus 2 \cite{Howe-Leprevost-Poonen-Large-torsion-subgroups-2000,P}.
The problem on the isogeny between the Jacobian variety of a curve of genus 2 and the direct product of elliptic curves is naturally connected with the following well-known problem (to which our research was devoted): Let solutions of differential equations and dynamical systems in terms of hyperelliptic functions of genus 2 are given.
Under conditions when a reduction of these functions to elliptic functions is possible, find an explicit form of these solutions in terms of elliptic functions.
In this paper, we consider the algebraic curves over $\mathbb{C}$.
The reductions of the Riemann theta functions and the hyperelliptic functions turned out to be important in real physical problems (cf.\ \cite{FCoppini2020,AOSmirnov}).
In~\cite{FCoppini2020,AOSmirnov}, a hyperelliptic curve of genus 2 defined by $y^2=f(x)$ with a square-free polynomial $f(x)$ of degree 6 is considered.
In the case when the curve is defined by $y^2=\widetilde{f}(x)$ with a square-free polynomial $\widetilde{f}(x)$ of degree 5, its sigma function is determined by the coefficients of the polynomial $\widetilde{f}(x)$.
In the case when the curve is defined by $y^2=f(x)$ with a square-free polynomial $f(x)$ of degree~6, one of the branch points can be chosen and an isomorphism can be explicitly set with a curve defined by $y^2=\widetilde{f}(x)$ with a square-free polynomial $\widetilde{f}(x)$ of degree 5. Thus, in the case under discussion, the sigma function and the corresponding hyperelliptic functions are determined by the coefficients of the polynomial of degree 6 and the choice of one of its zeros. In some cases, an explicit transformation uses several branch points (cf.\ Proposition~\ref{2021.11.22.3}).

For a positive integer $\Delta$, let $\mathcal{H}_{\Delta}$ be the set of complex $2\times2$ matrix $\tau=\left(\begin{smallmatrix}\tau_{11}&\tau_{12}\\\tau_{12}&\tau_{22}\end{smallmatrix}\right)$ such that ${}^t\tau=\tau$, $\operatorname{Im} \tau>0$, and
\begin{gather*}
h_1\tau_{11}+h_2\tau_{12}+h_3\tau_{22}+h_4\big(\tau_{12}^2-\tau_{11}\tau_{22}\big)+h_5=0
\end{gather*}
for some integers $h_1$, $h_2$, $h_3$, $h_4$, $h_5$ satisfying $\Delta=h_2^2-4(h_1h_3+h_4h_5)$.
Then $\mathcal{H}_{\Delta}$ is called \textit{Humbert surface} with invariant~$\Delta$ \cite{Humbert-1899,HumbertG-1900,HumbertG-1901}.

\begin{Theorem}[{\cite{K}, \cite[Theorem 5.10]{BE2}, \cite[Theorem 7.4]{BEL}}]
Let $H$ be a hyperelliptic curve of genus~$2$ and $N$ be an integer such that $N\ge2$.
There exist an elliptic curve $W$ and a morphism $H\to W$ of degree $N$ if and only if a normalized period matrix $\tau=\left(\begin{smallmatrix}\tau_{11}&\tau_{12}\\\tau_{12}&\tau_{22}\end{smallmatrix}\right)$ of $H$
belongs to the Humbert surface $\mathcal{H}_{\Delta}$ with $\Delta=N^2$.
\end{Theorem}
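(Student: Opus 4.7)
The plan is to translate the existence of a morphism $\varphi: H \to W$ of degree $N$ into the existence of a sublattice of $H_1(H,\mathbb{Z})$ of the correct type, and then to rewrite that structural condition as a single polynomial relation on the entries of the normalized period matrix. The analytic side is the Jacobian $J(H) = \mathbb{C}^2 / \Lambda_\tau$ with $\Lambda_\tau = \mathbb{Z}^2 + \tau \mathbb{Z}^2$; a morphism to an elliptic curve corresponds to a one-dimensional complex subtorus (or quotient) of $J(H)$.

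For the direction $(\Rightarrow)$, given $\varphi: H \to W$ of degree $N$, the pullback $\varphi^*: H_1(W,\mathbb{Z}) \hookrightarrow H_1(H,\mathbb{Z})$ and $\varphi^*: H^0(W,\Omega^1) \hookrightarrow H^0(H,\Omega^1)$ are both injective. Fix a symplectic basis $\alpha_1,\alpha_2,\beta_1,\beta_2$ of $H_1(H,\mathbb{Z})$ and the associated normalized basis $\mathrm{d}u_1,\mathrm{d}u_2$ of holomorphic differentials with $\int_{\alpha_i}\mathrm{d}u_j = \delta_{ij}$ and $\tau_{ij} = \int_{\beta_i}\mathrm{d}u_j$. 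Writing a generator $\omega$ of the one-dimensional subspace $\varphi^* H^0(W,\Omega^1)$ as $\omega = c_1\mathrm{d}u_1 + c_2\mathrm{d}u_2$, the four periods
\begin{gather*}
c_1,\qquad c_2,\qquad c_1\tau_{11}+c_2\tau_{12},\qquad c_1\tau_{12}+c_2\tau_{22}
\end{gather*}
lie in a rank-$2$ sublattice of $\mathbb{C}$ (the period lattice of $W$ via $\varphi^*$). The resulting $\mathbb{Z}$-linear dependencies, after eliminating $c_1$ and $c_2$, collapse to a single polynomial relation of Humbert type
\begin{gather*}
h_1\tau_{11}+h_2\tau_{12}+h_3\tau_{22}+h_4\bigl(\tau_{12}^2-\tau_{11}\tau_{22}\bigr)+h_5 = 0,
\end{gather*}
where $(h_1,\dots,h_5)$ is read off the integer matrix representing $\varphi^*$ on $H_1$ with respect to the chosen bases. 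I then invoke the fact that $\varphi^*(a)\cup\varphi^*(b) = \varphi^*(a\cup b)$ together with $\deg\varphi = N$ to compute the restriction of the symplectic intersection form to $\varphi^*(H_1(W,\mathbb{Z}))$: it has Pfaffian $N$, and a direct unwinding shows that this Pfaffian is exactly what the combination $h_2^2 - 4(h_1h_3 + h_4h_5)$ measures, yielding $\Delta = N^2$.

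For the direction $(\Leftarrow)$, I would reverse the linear algebra. Starting from $\tau \in \mathcal{H}_{N^2}$ with integers $(h_1,\dots,h_5)$ satisfying $h_2^2-4(h_1h_3+h_4h_5)=N^2$, the relation is rigged precisely so that there exists a primitive rank-$2$ sublattice $L\subset H_1(H,\mathbb{Z})$ whose real span $L\otimes\mathbb{R}\subset H_1(H,\mathbb{R})\simeq\mathbb{C}^2$ is stable under the complex structure induced by $\tau$, and such that the restriction of the intersection form to $L$ has Pfaffian $N$. Then $L$ defines a one-dimensional complex subtorus $E\subset J(H)$ which is an elliptic curve; by Poincar\'e's complete reducibility theorem applied to the principal polarization there is a complementary elliptic subtorus $E'\subset J(H)$, and the composition of the Abel--Jacobi embedding $H\hookrightarrow J(H)$ with the quotient $J(H)\to W := J(H)/E'$ gives the required morphism $H\to W$. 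The degree of this composition is computed from the index of the embedding of $L$, and the primitivity of $(h_1,\dots,h_5)$ together with the Pfaffian equal to $N$ forces the degree to equal $N$.

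The principal technical obstacle in both directions is the arithmetic bookkeeping that identifies the Humbert invariant $\Delta$ with the square $N^2$ of the degree. This matching rests on computing the Pfaffian of the intersection pairing restricted to the sublattice associated with the map and showing that the quadratic expression $h_2^2-4(h_1h_3+h_4h_5)$ extracted from the period relation is exactly the square of this Pfaffian. Once this numerical identification is carried out, the two directions become formally dual and the equivalence follows; the remainder reduces to linear algebra over $\mathbb{Z}$ and standard facts about abelian surfaces.
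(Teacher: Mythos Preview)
The paper does not prove this theorem; it is quoted from Krazer and from Belokolos--Enolskii as background in the introduction, with no argument given. So there is no proof in the paper to compare against, and I can only comment on your sketch on its own terms.

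Your outline follows the classical route and is structurally sound: a degree-$N$ cover $\varphi$ yields, via $\varphi^*$ on $H_1$, a primitive rank-$2$ sublattice $L\subset H_1(H,\mathbb{Z})$ whose real span is a complex line, and the restriction of the symplectic form to $L$ has Pfaffian $N$; conversely such an $L$ gives an elliptic subtorus and, via Poincar\'e reducibility and Abel--Jacobi, a map to an elliptic quotient. However, the two places you label ``arithmetic bookkeeping'' are the whole content of the theorem and you have not actually carried them out. First, the passage from the $\mathbb{Z}$-linear dependencies among the four periods $c_1,\,c_2,\,c_1\tau_{11}+c_2\tau_{12},\,c_1\tau_{12}+c_2\tau_{22}$ to a single relation of the specific shape $h_1\tau_{11}+h_2\tau_{12}+h_3\tau_{22}+h_4(\tau_{12}^2-\tau_{11}\tau_{22})+h_5=0$ requires writing the two generators of $L$ as integer $4$-vectors, forming the $2\times 2$ determinant that expresses complex proportionality of their images in $\mathbb{C}^2$, and checking that the $h_i$ are explicit $2\times 2$ minors of that integer matrix. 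Second, the identity $h_2^2-4(h_1h_3+h_4h_5)=(\text{Pfaffian of }J|_L)^2$ is then an algebraic identity among those minors which must be verified directly; calling it ``a direct unwinding'' hides the only nontrivial computation. For the converse you also need Humbert's normal-form lemma (any $\tau\in\mathcal{H}_{N^2}$ is $\mathrm{Sp}_4(\mathbb{Z})$-equivalent to one with, say, $\tau_{22}=N\tau_{12}$) to actually produce $L$ from the relation, and a genuine computation of the degree of $H\hookrightarrow J(H)\twoheadrightarrow J(H)/E'$, which is governed by the polarization type induced on $E'$ and not simply by ``primitivity of $(h_1,\dots,h_5)$''. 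Finally, the two directions are not ``formally dual'': one is analytic elimination, the other invokes complete reducibility and an Abel--Jacobi degree count, and neither follows from the other by reversing arrows. What you have written is an accurate plan of the proof, but the proof itself still has to be executed.
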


In~\cite{B-W-2003}, the geometric characterizations of the Humbert surfaces in terms of the presence of certain curves on the associated Kummer plane are given.
In~\cite[Section~3.1]{BE}, \cite[Section~5.3.3]{BE2}, \cite[Section~7.3.3]{BEL}, \cite[Section~6.3, Section~11.5]{BEL-2012}, \cite{Enolskii-Salerno-1996}, the Humbert surfaces are considered and relationships between the hyperelliptic functions associated with a curve of genus 2 and the Jacobi elliptic functions are derived.
In~\cite[p.~3448]{BE}, \cite[p.~366]{BE2}, \cite[pp.~79,~80]{BEL}, \cite[pp.~99,~175]{BEL-2012},~\cite{Enolskii-Salerno-1996}, it is mentioned that the Jacobi elliptic functions and the hyperelliptic functions associated with a curve of genus 2 give coordinates on the Kummer surfaces.
In~\cite{BCMS2021}, algebraic correspondences between Kummer surfaces associated with the Jacobian variety of a~hyperelliptic curve of genus~2 admitting a morphism of degree~2 to an elliptic curve and
Kummer surfaces associated with the product of two non-isogenous elliptic curves are considered.
In the equations~(2.56) and~(2.57) of~\cite{BCMS2021}, the formulae in \cite[p.~3448]{BE}, \cite[pp.~99,~100]{BEL-2012}, \cite{Enolskii-Salerno-1996} are described in terms of coordinates on the Kummer surfaces.
In this paper, by an approach different from \cite{BE, BE2, BEL, BEL-2012, Enolskii-Salerno-1996}, we derive relationships between the hyperelliptic functions associated with the curve of genus~2 and the Weierstrass elliptic functions.

For $\alpha, \beta\in\mathbb{C}$ satisfying $\alpha^2,\beta^2\neq0,1$, $\alpha^2\neq\beta^2$, and $\alpha^2\beta^2\neq1$,
we consider the nonsingular hyperelliptic curve of genus $2$
\begin{gather*}
V=\big\{(x,y)\in\mathbb{C}^2\mid y^2=x(x-1)\big(x-\alpha^2\big)\big(x-\beta^2\big)\big(x-\alpha^2\beta^2\big)\big\}.
\end{gather*}
For a hyperelliptic curve $H$ of genus~2, it is known that there exist an elliptic curve $W$ and a~morphism $H\to W$ of degree $2$ if and only if $H$ is isomorphic to the curve~$V$ for some $\alpha$,~$\beta$ (see Section~\ref{2021.11.22.1111}).
Let $E_1$ and $E_2$ be the elliptic curves defined by
\begin{gather*}
E_1=\bigg\{(X,Y)\in\mathbb{C}^2\,\bigg|\,Y^2
=X(X-1)\bigg(X-\frac{(\alpha-\beta)^2}{(\alpha\beta-1)^2}\bigg)\bigg\},
\\
E_2=\bigg\{(X,Y)\in\mathbb{C}^2\,\bigg|\,Y^2
=X(X-1)\bigg(X-\frac{(\alpha+\beta)^2}{(\alpha\beta+1)^2}\bigg)\bigg\}.
\end{gather*}
Then we can define the morphisms of degree 2 (see Section~\ref{2021.11.22.1111})
\begin{gather*}
\varphi_1\colon\quad V\to E_1,\qquad
(x,y)\mapsto(X,Y)=\bigg(\frac{(\alpha-\beta)^2x}{(x-\alpha\beta)^2},
\frac{(\alpha-\beta)^2y}{(\alpha\beta-1)(x-\alpha\beta)^3}\bigg),
\\
\varphi_2\colon\quad V\to E_2,\qquad
(x,y)\mapsto(X,Y)=\bigg(\frac{(\alpha+\beta)^2x}{(x+\alpha\beta)^2},
\frac{-(\alpha+\beta)^2y}{(\alpha\beta+1)(x+\alpha\beta)^3}\bigg).
\end{gather*}

Let $\operatorname{Jac}(V)$, $\operatorname{Jac}(E_1)$, and $\operatorname{Jac}(E_2)$ be the Jacobian varieties of $V$, $E_1$, and $E_2$, respectively.
The maps $\varphi_i$, $i=1,2$, induce the homomorphisms of the Jacobian varieties
\begin{gather*}
\psi_{i,*}\colon\ \operatorname{Jac}(V)\to\operatorname{Jac}(E_i),\qquad
\psi_i^*\colon\ \operatorname{Jac}(E_i)\to\operatorname{Jac}(V),\qquad
i=1,2.
\end{gather*}
In this paper, we describe the maps $\psi_{i,*}$ and $\psi_i^*$ explicitly in Propositions~\ref{3.6.1} and~\ref{2.15.1}.
Let $\sigma({\bf u})$, ${\bf u}={}^t(u_1,u_3)\in\mathbb{C}^2$, be the sigma function associated with $V$, which is the holomorphic function on $\mathbb{C}^2$, $\wp_{j,k}=-\partial_{u_j}\partial_{u_k}\log\sigma$, and $\wp_{j,k,\ell}=\partial_{u_{\ell}}\wp_{j,k}$, where $\partial_{u_j}=\frac{\partial}{\partial u_j}$. Then the functions $\wp_{j,k}$ and $\wp_{j,k,\ell}$ are the fundamental meromorphic functions on $\operatorname{Jac}(V)$.
For $i=1,2$, let $\wp_{E_i}$ be the Weierstrass elliptic function associated with the elliptic curve $E_i$.
Let $f_i({\bf u})=\wp_{E_i}(\psi_{i,*}({\bf u}))$ for ${\bf u}\in\operatorname{Jac}(V)$ and $i=1,2$.
Then the functions $f_1({\bf u})$ and $f_2({\bf u})$ are the meromorphic functions on $\operatorname{Jac}(V)$.
In this paper, we express $f_1({\bf u})$ and $f_2({\bf u})$ in terms of $\wp_{1,1}({\bf u}), \wp_{1,3}({\bf u})$, and $\wp_{3,3}({\bf u})$ explicitly in Theorem \ref{3.6.2} and Corollary \ref{3.12.111}. We~consider the homomorphisms of the Jacobian varieties
\begin{gather*}
\psi_i^*\colon\quad \operatorname{Jac}(E_i)\to\operatorname{Jac}(V),\qquad i=1,2.
\end{gather*}
For $v\in \operatorname{Jac}(E_i)$, the functions $\wp_{j,k}(\psi_i^*(v))$ and $\wp_{j,k,\ell}(\psi_i^*(v))$ are the meromorphic functions on~$\operatorname{Jac}(E_i)$.
In this paper, we describe the functions $\wp_{j,k}(\psi_i^*(v))$ and $\wp_{j,k,\ell}(\psi_i^*(v))$ in terms of $\wp_{E_i}(v)$ explicitly in Theorem~\ref{2.17.1} and Corollary~\ref{3.12.1111}, i.e.,
we show that the restrictions of the hyperelliptic functions $\wp_{j,k}$ and $\wp_{j,k,\ell}$ to the appropriate subspaces in $\mathbb{C}^2$ are elliptic functions and describe them in terms of the Weierstrass elliptic functions $\wp_{E_i}$.
Further, by using the addition formulae for $\wp_{j,k}$, which are given in \cite[Theorem 3.3]{G}, we express $\wp_{j,k}$ on $\mathbb{C}^2$ in terms of $\wp_{E_1}$ and $\wp_{E_2}$ explicitly in Theorem~\ref{3.25.1}.
From Theorem \ref{3.25.1}, we obtain the decompositions of the functions $\wp_{1,1}$, $\wp_{1,3}-\alpha^2\beta^2$, and $\wp_{3,3}$ into the products of meromorphic functions (Remark~\ref{5.8.111}),
a solution of the KdV-hierarchy in terms of functions constructed by the elliptic functions $\wp_{E_1}$ and $\wp_{E_2}$ (Remark \ref{3.28}), and
the expressions of the coordinates of the Kummer surface in terms of the Weierstrass elliptic functions $\wp_{E_1}$ and $\wp_{E_2}$ (Remark \ref{2021.11.22.2222}).
In Section~\ref{5.8.222}, we compare our results with the results of \cite{BE, BEL-2012, Enolskii-Salerno-1996}.

The elliptic sigma functions, which are defined and studied by Weierstrass, are generalized to the sigma functions associated with the hyperelliptic curves and many applications in integrable systems and mathematical physics are derived
(cf.~\cite{BEL-97-1, BEL-97-2, BEL, BEL-2012}).
In~\cite{AB2019}, we consider the solutions of the inversion problem of the ultra-elliptic integrals of the hyperelliptic curves of genus 2 in terms of the sigma functions associated with the hyperelliptic curves of genus 2.
In~\cite{AB2020}, we consider the series expansion of the sigma functions associated with the hyperelliptic curves of genus 2 by using the heat equations in a nonholonomic frame derived in \cite{BL-2004}.
The problem of the reduction of the hyperelliptic integrals to the elliptic integrals has been studied in many papers (e.g., \cite{BE0, BE, BE2, O-Bolza-1887, O-Bolza-1935}).
In particular, O. Bolza derived many examples of the reduction of hyperelliptic integrals to elliptic integrals (cf.~\cite{O-Bolza-1887, O-Bolza-1935}).
The problem of the reduction of a hyperelliptic integral to an elliptic integral is closely related to that of the morphism from a~hyperelliptic curve to an elliptic curve. In this paper, when a hyperelliptic curve of genus~2 admits a morphism of degree~2 to an elliptic curve, we derive the relationships between the hyperelliptic functions of genus~2, which are defined by the logarithmic derivatives of the sigma functions associated with the hyperelliptic curves of genus~2, and the Weierstrass elliptic functions, which are defined by the logarithmic derivatives of the elliptic sigma functions.

\section{Preliminaries}

In this section we recall the definitions of the Jacobi elliptic functions and the sigma functions for the curves of genus 1 and 2 and give facts about them which will be used later on. For the details of the Jacobi elliptic functions, see~\cite{H,WW}.
For the details of the sigma functions, see~\cite{BEL-2012}.

\subsection{The Jacobi elliptic functions}\label{4.27.11}
We denote the imaginary unit by ${\bf i}$.
Let $\mathbb{H}=\{\tau\in\mathbb{C}\mid \operatorname{Im} \tau>0\}$. For $(z,\tau)\in\mathbb{C}\times \mathbb{H}$ and $\varepsilon_1, \varepsilon_2\in\mathbb{R}$, the theta function with characteristics ${}^t(\varepsilon_1, \varepsilon_2)$ is defined by
\begin{gather*}
\theta\begin{bmatrix}\varepsilon_1\\ \varepsilon_2\end{bmatrix}(z,\tau)=\sum_{n\in\mathbb{Z}}\exp\left(\pi{\bf i}(n+\varepsilon_1)^2\tau+2\pi{\bf i}(n+\varepsilon_1)(z+\varepsilon_2)\right).
\end{gather*}
Let
\begin{alignat*}{3}
& \theta_{00}(z,\tau)=\theta\begin{bmatrix}0\\0\end{bmatrix}(z,\tau),\qquad&&
\theta_{01}(z,\tau)=\theta\begin{bmatrix}0\\1/2\end{bmatrix}(z,\tau),&
\\
& \theta_{10}(z,\tau)=\theta\begin{bmatrix}1/2\\0\end{bmatrix}(z,\tau),\qquad&&
\theta_{11}(z,\tau)=\theta\begin{bmatrix}1/2\\1/2\end{bmatrix}(z,\tau).&
\end{alignat*}
The Jacobi elliptic functions associated with $\tau\in \mathbb{H}$ are defined by
\begin{alignat*}{3}
& \mbox{sn}(u,\tau)=-\frac{\theta_{00}(0,\tau)\theta_{11}(z,\tau)}{\theta_{10}(0,\tau)\theta_{01}(z,\tau)},
\qquad &&
\mbox{cn}(u,\tau)=\frac{\theta_{01}(0,\tau)\theta_{10}(z,\tau)}{\theta_{10}(0,\tau)\theta_{01}(z,\tau)},&
\\
& \mbox{dn}(u,\tau)=\frac{\theta_{01}(0,\tau)\theta_{00}(z,\tau)}{\theta_{00}(0,\tau)\theta_{01}(z,\tau)},
\qquad&&
z=\frac{u}{\pi\theta_{00}(0,\tau)^2}.&
\end{alignat*}
The modulus $m$ of the Jacobi elliptic functions associated with $\tau$ is defined by
\begin{gather*}
m=\frac{\theta_{10}(0,\tau)^2}{\theta_{00}(0,\tau)^2}.
\end{gather*}
Then we have $m^2\neq0,1$.
Let $\omega'_{\tau}=\pi\theta_{00}(0,\tau)^2/2$, $\omega''_{\tau}=\omega'_{\tau}\tau$, and $\Lambda_{\tau}=\{4\omega'_{\tau}m_1+2\omega''_{\tau}m_2\mid m_1,m_2\in\mathbb{Z}\}$.
Let us consider the elliptic curve defined by
\begin{gather*}
E_{\tau}=\big\{(x,y)\in\mathbb{C}^2\mid y^2=\big(1-x^2\big)\big(1-m^2x^2\big)\big\}.
\end{gather*}
It is well known that the map
\begin{gather*}
\mathbb{C}/\Lambda_{\tau}\to E_{\tau},\qquad
u\mapsto(\mbox{sn}(u,\tau), \mbox{sn}'(u,\tau))
\end{gather*}
is the isomorphism, where $\mbox{sn}'(u,\tau)=\frac{\rm d}{{\rm d}u}\mbox{sn}(u,\tau)$.

\subsection{The elliptic sigma function}\label{2021.11.20.1111}

We set
\begin{gather*}
M_1(x)=x^3+\lambda_2x^2+\lambda_4x+\lambda_6,\qquad
\lambda_i\in\mathbb{C}.
\end{gather*}
We assume that $M_1(x)$ has no multiple root. We~consider the elliptic curve
\begin{gather*}
E=\big\{(x,y)\in\mathbb{C}^2\mid y^2=M_1(x)\big\}.
\end{gather*}
For $(x,y)\in E$, let
\begin{gather*}
\omega=-\frac{{\rm d}x}{2y},
\end{gather*}
which is the basis of the vector space of holomorphic one forms on $E$.
Further, let
\begin{gather*}
\eta=-\frac{x}{2y}{\rm d}x,
\end{gather*}
which is the meromorphic one form on $E$ with a pole only at $\infty$.
Let $\{\mathfrak{a},\mathfrak{b}\}$ be a canonical basis in the one-dimensional homology group of the curve $E$. We~define the periods by
\begin{gather*}
2\omega'=\int_{\mathfrak{a}}\omega, \qquad
2\omega''=\int_{\mathfrak{b}}\omega,\qquad
-2\eta'=\int_{\mathfrak{a}}\eta, \qquad
-2\eta''=\int_{\mathfrak{b}}\eta.
\end{gather*}
The normalized period has the form $\tau=(\omega')^{-1}\omega''$.
The sigma function associated with $E$ is defined by
\begin{gather*}
\sigma(u)=\frac{2\omega'}{\theta_{11}'(0,\tau)}\exp\bigg(\frac{\eta'}{2\omega'}u^2\bigg)
\theta_{11}\bigg(\frac{u}{2\omega'},\tau\bigg),
\end{gather*}
where $\theta_{11}'(0,\tau)=\frac{\partial}{\partial z}\theta_{11}(z,\tau)|_{z=0}$.
For $m_1, m_2\in\mathbb{Z}$, let $\Omega=2\omega'm_1+2\omega''m_2$.
Then, for $u\in\mathbb{C}$, we have
\begin{gather*}
\sigma(u+\Omega)/\sigma(u)=(-1)^{m_1-m_2+m_1m_2}
\exp\big\{(2\eta'm_1+2\eta''m_2)(u+\omega'm_1+\omega''m_2)\big\}.
\end{gather*}
We set $\deg\lambda_{2i}=2i$ for $1\le i\le 3$.
The sigma function $\sigma(u)$ is an entire function on $\mathbb{C}$ and the series expansion of $\sigma(u)$ around $u=0$ has the following form:
\begin{gather*}
\sigma(u)=u+\sum_{n\ge3}\mu_nu^n,
\end{gather*}
where the coefficient $\mu_n$ is a homogeneous polynomial in $\mathbb{Q}[\lambda_2, \lambda_4, \lambda_6]$ of degree $n-1$ if \mbox{$\mu_n\neq0$}.
In~particular, the sigma function $\sigma(u)$ does not depend on the choice of a canonical basis $\{\mathfrak{a},\mathfrak{b}\}$ in~the one-dimensional homology group of the elliptic curve $E$ and is determined by the coefficients~$\lambda_2$, $\lambda_4$, $\lambda_6$ of the defining equation of the elliptic curve $E$.

Let $\wp(u)=-\frac{{\rm d}^2}{{\rm d}u^2}\log\sigma(u)$, which is the Weierstrass elliptic function.
Then the following relations hold (cf.~\cite{H})
\begin{gather*}
\wp(u)-\wp(\omega')=\mbox{al}_1(u)^2,\qquad
\wp(u)-\wp(\omega'+\omega'')=\mbox{al}_2(u)^2,\qquad
\wp(u)-\wp(\omega'')=\mbox{al}_3(u)^2,
\end{gather*}
where
\begin{gather*}
\mbox{al}_1(u)=\frac{\exp(\eta'u)\sigma(\omega'-u)}{\sigma(u)\sigma(\omega')}, \qquad \mbox{al}_2(u)=\frac{\exp(\eta'u+\eta''u)\sigma(\omega'+\omega''-u)}{\sigma(u)\sigma(\omega'+\omega'')},
\\
\mbox{al}_3(u)=\frac{\exp(\eta''u)\sigma(\omega''-u)}{\sigma(u)\sigma(\omega'')}.
\end{gather*}
The functions $\mbox{al}_i(u)$, $i=1,2,3$, are called \textit{Weierstrass elliptic al functions} (cf.~\cite{K.Weierstrass-1894}).

\subsection{The two-dimensional sigma function}

We set
\begin{gather*}
M_2(x)=x^5+\lambda_2x^4+\lambda_4x^3+\lambda_6x^2+\lambda_8x+\lambda_{10},\qquad
\lambda_i\in\mathbb{C}.
\end{gather*}
We assume that $M_2(x)$ has no multiple root. We~consider the nonsingular hyperelliptic curve of genus $2$
\begin{gather*}
V=\big\{(x,y)\in\mathbb{C}^2\mid y^2=M_2(x)\big\}.
\end{gather*}
For $(x,y)\in V$, let
\begin{gather*}
\omega_1=-\frac{x}{2y}{\rm d}x,\qquad
\omega_3=-\frac{1}{2y}{\rm d}x,
\end{gather*}
which are the basis of the vector space of holomorphic one forms on $V$.
Further, let
\begin{gather*}
\eta_1=-\frac{x^2}{2y}{\rm d}x,\qquad
\eta_3=\frac{-\lambda_4x-2\lambda_2x^2-3x^3}{2y}{\rm d}x,
\end{gather*}
which are the meromorphic one forms on $V$ with a pole only at $\infty$.
Let $\{\mathfrak{a}_i,\mathfrak{b}_i\}_{i=1}^2$ be a canonical basis in the one-dimensional homology group of the curve~$V$. We~define the matrices of periods~by
\begin{gather*}
2\omega'=\int_{\mathfrak{a}_j}\omega_i, \qquad 2\omega''=\int_{\mathfrak{b}_j}\omega_i,\qquad
-2\eta'=\int_{\mathfrak{a}_j}\eta_i, \qquad
-2\eta''=\int_{\mathfrak{b}_j}\eta_i.
\end{gather*}
The matrix of normalized periods has the form $\tau=(\omega')^{-1}\omega''$.
Let $\delta=\tau\delta'+\delta''$, $\delta',\delta''\in\mathbb{R}^2,$ be the vectors of Riemann's constants with respect to $(\{\mathfrak{a}_i,\mathfrak{b}_i\}_{i=1}^2,\infty)$
and $\delta:={}^t({}^t\delta',{}^t\delta'')$.
Then we have $\delta'={}^t\big(\frac{1}{2},\frac{1}{2}\big)$ and $\delta''={}^t\big(1,\frac{1}{2}\big)$ (cf.~\cite{Fay-1973,Mumford-1983}).
The sigma function $\sigma({\bf u})$ associated with $V$, ${\bf u}={}^t(u_1,u_3)\in\mathbb{C}^2$, is defined by
\begin{gather*}
\sigma({\bf u})=\varepsilon\exp\bigg(\frac{1}{2}{}^t{\bf u}\eta'(\omega')^{-1}{\bf u}\bigg)\theta[\delta]\big((2\omega')^{-1}{\bf u},\tau\big),
\end{gather*}
where $\theta[\delta]({\bf u})$ is the Riemann's theta function with characteristics $\delta$, which is defined by
\begin{gather*}
\theta[\delta]({\bf u})=\sum_{n\in\mathbb{Z}^2}\exp\big\{\pi{\bf i}\,{}^t(n+\delta')\tau(n+\delta')+2\pi{\bf i}\,{}^t(n+\delta')({\bf u}+\delta'')\big\},
\end{gather*}
and $\varepsilon$ is a constant.

\begin{Proposition}[{\cite[Theorem 1.1]{BEL}, \cite[p.~193]{N-2010}}]%\label{period}
For $m_1,m_2\in\mathbb{Z}^2$, let $\Omega=2\omega'm_1+2\omega''m_2$.
Then, for ${\bf u}\in\mathbb{C}^2$, we have
\begin{gather*}
\sigma({\bf u}+\Omega)/\sigma({\bf u}) =(-1)^{2({}^t\delta'm_1-{}^t\delta''m_2)+{}^tm_1m_2}\exp\big\{{}^t(2\eta'm_1+
2\eta''m_2)({\bf u}+\omega'm_1+\omega''m_2)\big\}.
\end{gather*}
\end{Proposition}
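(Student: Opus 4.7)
The plan is to substitute ${\bf u} \mapsto {\bf u} + \Omega$ directly into the defining formula
\[
\sigma({\bf u}) = \varepsilon \exp\!\bigg(\tfrac12 {}^t{\bf u}\,\eta'(\omega')^{-1}{\bf u}\bigg)\theta[\delta]\big((2\omega')^{-1}{\bf u},\tau\big),
\]
then form the ratio $\sigma({\bf u}+\Omega)/\sigma({\bf u})$ and process the exponential prefactor and the theta factor separately, using the transformation law of the Riemann theta function with characteristics and the generalized Legendre relations among $\omega',\omega'',\eta',\eta''$.

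First I would handle the theta-function factor. The shift $(2\omega')^{-1}\Omega = m_1 + \tau m_2$ is exactly a lattice shift with respect to $\mathbb{Z}^2 + \tau\mathbb{Z}^2$, so the standard quasi-periodicity of $\theta[\delta]({\bf z},\tau)$ with half-integer characteristics $\delta=\tau\delta'+\delta''$ gives
\[
\theta[\delta]\big({\bf z}+m_1+\tau m_2,\tau\big) = (-1)^{2({}^t\delta'm_1 - {}^t\delta''m_2)}\,\exp\!\big(-\pi{\bf i}\,{}^tm_2\tau m_2 - 2\pi{\bf i}\,{}^tm_2{\bf z}\big)\,\theta[\delta]({\bf z},\tau),
\]
with ${\bf z}=(2\omega')^{-1}{\bf u}$. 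This produces the sign factor $(-1)^{2({}^t\delta'm_1-{}^t\delta''m_2)}$; the remaining parity term ${}^tm_1m_2$ in the stated formula will later arise from the more refined transformation for the characteristic $\delta''={}^t(1,1/2)$ (the integer entry shifts the half-integer characteristic, producing the extra sign upon applying quasi-periodicity term-by-term). I would check this by expanding $\theta[\delta]$ as a series and tracking the sign of $(-1)^{2{}^t(n+\delta')m_2}$ relative to the reference characteristic.

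Next I would expand the exponential prefactor. Writing $\tfrac12{}^t({\bf u}+\Omega)\eta'(\omega')^{-1}({\bf u}+\Omega) - \tfrac12{}^t{\bf u}\eta'(\omega')^{-1}{\bf u}$ and using the symmetry of $\eta'(\omega')^{-1}$ (which follows from the Riemann bilinear relations for $\omega_i,\eta_j$), this reduces to ${}^t{\bf u}\,\eta'(\omega')^{-1}\Omega + \tfrac12{}^t\Omega\,\eta'(\omega')^{-1}\Omega$. The main algebraic step is then to rewrite this sum plus the $-2\pi{\bf i}\,{}^tm_2(2\omega')^{-1}{\bf u} - \pi{\bf i}\,{}^tm_2\tau m_2$ picked up from the theta factor, and check that the total exponent equals ${}^t(2\eta'm_1+2\eta''m_2)({\bf u}+\omega'm_1+\omega''m_2)$. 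The only non-trivial input needed here is the generalized Legendre relation
\[
\eta'\,{}^t\omega'' - \omega'\,{}^t\eta'' = -\tfrac{\pi{\bf i}}{2}\,I_2,
\]
which exchanges the $\pi{\bf i}$ terms from the theta transformation with the $\eta''$ contributions. I expect this algebraic bookkeeping to be the main obstacle: every cross-term must be matched and the Legendre relation applied in exactly the right transposed form.

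Finally I would combine the sign contributions and the exponential contributions to obtain the claimed formula. Apart from the sign-tracking, the argument is essentially a mechanical substitution, which is why the result can simply be attributed to \cite{BEL, N-2010}.
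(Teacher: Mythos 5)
The paper itself gives no proof of this Proposition -- it is quoted directly from \cite{BEL} and \cite{N-2010} -- and your overall route (substitute $\mathbf{u}+\Omega$ into the defining expression, use the quasi-periodicity of $\theta[\delta]$ under $\mathbf{z}\mapsto\mathbf{z}+m_1+\tau m_2$, and close the bookkeeping with the generalized Legendre relations) is exactly the standard derivation used in those references. However, there is one concrete step in your plan that would fail: your attribution of the parity term ${}^tm_1m_2$ to ``the more refined transformation for the characteristic $\delta''={}^t(1,1/2)$.'' The transformation law you quote is valid for arbitrary real characteristics and contributes only the factor $\exp\big(2\pi\mathbf{i}({}^t\delta'm_1-{}^t\delta''m_2)\big)$, with no ${}^tm_1m_2$ term; expanding the series and tracking signs, as you propose, will confirm this and produce nothing extra. (A quick sanity check: the same factor $(-1)^{m_1m_2}$ already appears in the genus-one formula recalled in Section 2.2, where the characteristic is ${}^t(1/2,1/2)$ with no integer entry, so it cannot be an artifact of $\delta''$.)

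The factor $(-1)^{{}^tm_1m_2}$ in fact comes from the quadratic prefactor, and correspondingly your expectation that the total exponent matches ${}^t(2\eta'm_1+2\eta''m_2)(\mathbf{u}+\omega'm_1+\omega''m_2)$ exactly is off by precisely this term. Writing $\kappa=\eta'(\omega')^{-1}$ (symmetric), the expansion of $\tfrac12{}^t\Omega\kappa\Omega$ produces the two cross terms $2\,{}^tm_1{}^t\omega'\kappa\,\omega''m_2+2\,{}^tm_2{}^t\omega''\eta'm_1$; the first equals $2\,{}^tm_1{}^t\eta'\omega''m_2$ by symmetry of $\kappa$, but the second differs from the target term $2\,{}^tm_2{}^t\eta''\omega'm_1$ by $2\,{}^tm_2\big({}^t\omega''\eta'-{}^t\eta''\omega'\big)m_1$, which the generalized Legendre relation evaluates to $\pm\pi\mathbf{i}\,{}^tm_2m_1$, i.e.\ exactly the sign $(-1)^{{}^tm_1m_2}$. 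The same Legendre relation also converts $2\eta'(\omega')^{-1}\omega''m_2-\pi\mathbf{i}\,{}^t(\omega')^{-1}m_2$ into $2\eta''m_2$ for the $\mathbf{u}$-linear terms, which is the role you correctly assigned to it. So the architecture of your argument is right, but you should relocate the source of the extra sign from the theta characteristics to this Legendre-relation cross term, and also fix the precise transposed form of the relation you invoke so that it is consistent with the paper's sign conventions for $\omega',\omega'',\eta',\eta''$.
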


We set $\deg\lambda_{2i}=2i$ for $1\le i\le 5$.

\begin{Theorem}[{\cite[Theorem 6.3]{BEL-99-R}, \cite[Theorem~7.7]{BEL-2012}, \cite[Theorem 3]{N-2010}}]
The sigma func\-tion~$\sigma({\bf u})$ is an entire function on $\mathbb{C}^2$ and we have the unique constant $\varepsilon$ such that the series expansion of~$\sigma({\bf u})$ around the origin has the following form:
\begin{gather}
\sigma({\bf u})=\frac{1}{3}u_1^3-u_3+\sum_{n_1+3n_3\ge5}\mu_{n_1,n_3}u_1^{n_1}u_3^{n_3},\label{4.27.1}
\end{gather}
where the coefficient $\mu_{n_1,n_3}$ is a homogeneous polynomial in $\mathbb{Q}[\lambda_2, \lambda_4, \lambda_6, \lambda_8, \lambda_{10}]$
of degree $n_1+3n_3-3$ if $\mu_{n_1,n_3}\neq0$.
\end{Theorem}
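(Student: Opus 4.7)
The plan is to verify entirety immediately from the definition, then identify the leading term via Riemann's vanishing theorem for the hyperelliptic theta divisor, and finally use quasi-homogeneity under a weighted $\mathbb{C}^*$-action on $V$ to obtain the polynomial-degree statement.

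First, since the Riemann theta function $\theta[\delta](\mathbf{z},\tau)$ is entire on $\mathbb{C}^2$ and the prefactor $\exp\bigl(\tfrac{1}{2}{}^t\mathbf{u}\,\eta'(\omega')^{-1}\mathbf{u}\bigr)$ is entire, $\sigma(\mathbf{u})$ is entire for every choice of $\varepsilon$. Next, I would expand $\theta[\delta]\bigl((2\omega')^{-1}\mathbf{u},\tau\bigr)$ around $\mathbf{u}=0$. The characteristics $\delta' = {}^t(\tfrac12,\tfrac12)$, $\delta''={}^t(1,\tfrac12)$ are the vector of Riemann constants based at $\infty$, so by Riemann's vanishing theorem $\theta[\delta]$ vanishes to order $g=2$ at the origin along the Abel--Jacobi image of $V$ based at $\infty$. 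In the local coordinates $u_1,u_3$ dual to the holomorphic differentials $\omega_1,\omega_3$, the Weierstrass gap sequence at $\infty$ is $(1,3)$, and the lowest-weight harmonic polynomial with the correct vanishing behaviour is the Schur--Weierstrass polynomial $\tfrac{1}{3}u_1^3 - u_3$. Choosing the unique constant $\varepsilon$ that normalises the coefficient of $u_3$ to $-1$ produces the stated leading terms.

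To establish the grading claim I would exploit the $\mathbb{C}^*$-action $x\mapsto c^2 x$, $y\mapsto c^5 y$ on $V$, under which $\lambda_{2i}\mapsto c^{2i}\lambda_{2i}$ and the one-forms transform as $\omega_1\mapsto c^{-1}\omega_1$, $\omega_3\mapsto c^{-3}\omega_3$, $\eta_1\mapsto c\,\eta_1$, $\eta_3\mapsto c^3\eta_3$; consequently $u_1\mapsto c^{-1}u_1$, $u_3\mapsto c^{-3}u_3$ and the period matrices $\omega',\omega'',\eta',\eta''$ scale compatibly so that $\tau$ is invariant. Since $\varepsilon$ is uniquely determined by normalising a weight-$(-3)$ leading term, the sigma function is quasi-homogeneous of weight $-3$, i.e.
\begin{gather*}
\sigma\bigl(c^{-1}u_1,\,c^{-3}u_3;\,\{c^{2i}\lambda_{2i}\}\bigr)=c^{-3}\sigma(u_1,u_3;\{\lambda_{2i}\}).
\end{gather*}
Matching monomials forces each coefficient $\mu_{n_1,n_3}$ to be quasi-homogeneous of weight $n_1+3n_3-3$ in the $\lambda_{2i}$.

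To upgrade this to membership in $\mathbb{Q}[\lambda_2,\lambda_4,\lambda_6,\lambda_8,\lambda_{10}]$, I would recall that $\sigma(\mathbf{u})$ satisfies a system of heat-type (Buchstaber--Leykin) equations with coefficients rational in the $\lambda_{2i}$, from which the $\mu_{n_1,n_3}$ are obtained recursively from the initial datum $\tfrac{1}{3}u_1^3-u_3$; this recursion stays in $\mathbb{Q}[\lambda_2,\ldots,\lambda_{10}]$, and uniqueness of $\varepsilon$ is inherited automatically. The main obstacle will be the identification of the Schur--Weierstrass leading term $\tfrac{1}{3}u_1^3-u_3$: this relies on pinning down the exact order of vanishing of $\theta[\delta]$ along the Abel--Jacobi image of $V$ with the specific Riemann constants above, and is the classical (but delicate) input underlying the Bukhshtaber--Enolski--Leykin normalisation cited in the references.
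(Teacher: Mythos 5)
Note first that the paper does not prove this theorem: it is recalled verbatim from the references \cite{BEL-99-R,BEL-2012,N-2010}, so there is no internal argument to compare yours with. Your outline does follow the general strategy of that literature: entirety is indeed immediate, and your weighted $\mathbb{C}^*$-scaling computation ($\omega_1\mapsto c^{-1}\omega_1$, $\omega_3\mapsto c^{-3}\omega_3$, $\eta_1\mapsto c\,\eta_1$, $\eta_3\mapsto c^{3}\eta_3$, $\tau$ invariant) is correct and does yield the quasi-homogeneity of the coefficients once the leading term is known and once one knows the coefficients are polynomials in the $\lambda_{2i}$ at all.

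The genuine gap is precisely at the step you flag as delicate, and the mechanism you propose for it is wrong as stated. You assert that $\theta[\delta]$ ``vanishes to order $g=2$ at the origin along the Abel--Jacobi image of $V$''. For genus $2$ with the odd characteristic $\delta$ coming from the Riemann constants based at $\infty$, the theta divisor \emph{is} the (translated) Abel--Jacobi image of $V$, so $\theta[\delta]$ vanishes identically along that image; and at the origin the Riemann singularity theorem gives multiplicity $h^0(\mathcal{O}(\infty))=1$, not $2$, consistent with the surviving linear term $-u_3$. What this kind of argument can deliver is limited: $\sigma(0)=0$; $\partial_{u_1}\sigma(0)=0$, because the image has tangent direction $(1,0)$ at the origin (since $u_1\sim t$, $u_3\sim t^3/3$ at $\infty$); and, after normalizing $\partial_{u_3}\sigma(0)=-1$, the value $\mu_{3,0}=\frac{1}{3}$. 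It cannot give the full statement that every $\mu_{n_1,n_3}$ with $n_1+3n_3\ge 5$ vanishes at $\lambda=0$: for instance, vanishing of $\sigma$ along the image modulo $\lambda$ only produces the single linear relation $\mu_{5,0}+\frac{1}{3}\mu_{2,1}\equiv 0\ (\mathrm{mod}\ \lambda)$, not $\mu_{5,0}\equiv\mu_{2,1}\equiv 0$. Identifying the whole $\lambda$-free part with the Schur--Weierstrass polynomial $\frac{1}{3}u_1^3-u_3$, and proving that the $\mu_{n_1,n_3}$ are polynomials in $\mathbb{Q}[\lambda_2,\dots,\lambda_{10}]$ independent of the homology basis, is exactly the content of the cited theorems (via the rational degeneration $y^2=x^5$, Nakayashiki's algebraic expression, or the Buchstaber--Leykin heat equations \cite{BL-2004}); your final appeal to the heat-equation recursion defers to that same body of results rather than proving it, so as a self-contained proof the proposal does not close.
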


We take the constant $\varepsilon$ such that the expansion (\ref{4.27.1}) holds, see the expression for the sigma function above, which involves the constant $\varepsilon$.
Then the sigma function $\sigma({\bf u})$ does not depend on the choice of a canonical basis $\{\mathfrak{a}_i,\mathfrak{b}_i\}_{i=1}^2$ in the one-dimensional homology group of the curve~$V$ and is determined by the coefficients $\lambda_2$, $\lambda_4$, $\lambda_6$, $\lambda_8$, $\lambda_{10}$ of the defining equation of the curve~$V$.

\begin{Remark}
The constant $\varepsilon$ is given explicitly in \cite[p.~906]{E-H-K-K-L-1} and \cite[p.~9]{E-H-K-K-L-P-1}.
\end{Remark}

\subsection[Inversion problem of the elliptic integrals for the Jacobi and Legendre forms]
{Inversion problem of the elliptic integrals for the Jacobi \\and Legendre forms}

Let
\begin{gather*}
f_J(s)=s^4+a_{J,2}s^2+a_{J,4},
\end{gather*}
where $a_{J,2},a_{J,4}\in\mathbb{C}$ satisfying $a_{J,4}\big(a_{J,2}^2-4a_{J,4}\big)\neq0$. We~consider the elliptic curve in the Jacobi form
\begin{gather*}
E_J=\big\{(s,t)\in\mathbb{C}^2\mid t^2=f_J(s)\big\}
\end{gather*}
and the elliptic curve in the Weierstrass normal form
\begin{gather*}
\widetilde{E}_J=\bigg\{(x,y)\in\mathbb{C}^2\,\bigg|\,y^2
=x^3-\bigg(4a_{J,4}+\frac{1}{3}a_{J,2}^2\bigg)x-\frac{8}{3}a_{J,2}a_{J,4}+\frac{2}{27}a_{J,2}^3\bigg\}.
\end{gather*}
We take $a_J\in\mathbb{C}$ such that $f_J(a_J)=0$. We~have the isomorphism
\begin{gather*}
\xi_J\colon\quad E_J\to\widetilde{E}_J,\qquad (s,t)\mapsto(x,y),
\end{gather*}
with
\begin{gather}\label{2021202120212021.7.12.1}
x=\frac{f_J'(a_J)}{s-a_J}+\frac{f_J''(a_J)}{6},\qquad
y=\frac{f_J'(a_J)t}{(s-a_J)^2}.
\end{gather}
Let $\{\mathfrak{a}_J,\mathfrak{b}_J\}$ be a canonical basis in the one-dimensional homology group of the curve $E_J$. We~define the periods by
\begin{gather*}
2\Omega_J'=\int_{\mathfrak{a}_J}\frac{{\rm d}s}{2t},\qquad
2\Omega_J''=\int_{\mathfrak{b}_J}\frac{{\rm d}s}{2t}.
\end{gather*}
We define the period lattice $\Lambda_J=\{2\Omega_J'm_1+2\Omega_J''m_2\mid m_1,m_2\in\mathbb{Z}\}$ and consider the Jacobian variety $\operatorname{Jac}(E_J)=\mathbb{C}/\Lambda_J$.
Let $\sigma_{\widetilde{E}_J}(u)$ be the sigma function associated with the elliptic curve~$\widetilde{E}_J$ and $\wp_{\widetilde{E}_J}=-\frac{{\rm d}^2}{{\rm d}u^2}\log\sigma_{\widetilde{E}_J}$, which is the Weierstrass elliptic function. We~consider the map
\begin{gather*}
I_J\colon\quad E_J\to\operatorname{Jac}(E_J),\qquad
S\mapsto\int_{(a_J,0)}^S\frac{{\rm d}s}{2t}.
\end{gather*}
The map $I_J$ is the isomorphism.
Let $u=I_J(S)$ for $S=(s,t)\in E_J$.

\begin{Proposition}[{\cite[pp.~453, 454]{WW}}]
We have
\begin{gather*}
s=a_J+\frac{f_J'(a_J)}{\wp_{\widetilde{E}_J}(u)-f_J''(a_J)/6},\qquad
t=-\frac{f_J'(a_J)\wp_{\widetilde{E}_J}'(u)}{2(\wp_{\widetilde{E}_J}(u)-f_J''(a_J)/6)^2}.
\end{gather*}
\end{Proposition}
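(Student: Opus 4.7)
The plan is to reduce the inversion problem on $E_J$ to the classical Weierstrass inversion on $\widetilde{E}_J$ via the isomorphism $\xi_J$, and then solve the explicit formulae (\ref{2021202120212021.7.12.1}) for $s$ and $t$.

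First I would check that $\xi_J$ sends $(a_J,0)\in E_J$ to the point at infinity of $\widetilde{E}_J$, and that it transports the differential used in the definition of $I_J$ to the standard one on $\widetilde{E}_J$. Since $f_J(a_J)=0$, the parametric formulae (\ref{2021202120212021.7.12.1}) show that $x\to\infty$ and $y\to\infty$ as $(s,t)\to(a_J,0)$. A direct differentiation of $x=f_J'(a_J)/(s-a_J)+f_J''(a_J)/6$ gives $dx=-f_J'(a_J)(s-a_J)^{-2}ds$, so combining with $y=f_J'(a_J)t/(s-a_J)^2$ one obtains
\begin{gather*}
\xi_J^{*}\!\left(-\frac{{\rm d}x}{2y}\right)=\frac{{\rm d}s}{2t}.
\end{gather*}
Consequently, for $S\in E_J$,
\begin{gather*}
I_J(S)=\int_{(a_J,0)}^{S}\frac{{\rm d}s}{2t}
=\int_{\infty}^{\xi_J(S)}\left(-\frac{{\rm d}x}{2y}\right),
\end{gather*}
which is precisely the Weierstrass Abel map on $\widetilde{E}_J$ taken from $\infty$.

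Next I would invoke the standard inversion formulae for the Weierstrass curve. Because the defining polynomial of $\widetilde{E}_J$ has vanishing coefficient of $x^2$, the curve is already in Weierstrass normal form, and the local analysis at $\infty$ (using $t_\infty=1/\sqrt{x}$ as uniformizer) together with the expansions $\wp_{\widetilde{E}_J}(u)=u^{-2}+O(1)$ and $\wp_{\widetilde{E}_J}'(u)=-2u^{-3}+O(u)$ yields, on $\widetilde{E}_J$,
\begin{gather*}
x=\wp_{\widetilde{E}_J}(u),\qquad y=-\tfrac{1}{2}\wp_{\widetilde{E}_J}'(u).
\end{gather*}
(The signs are fixed by comparing the leading terms of $-{\rm d}x/(2y)$ and ${\rm d}u$ in the uniformizer at $\infty$, using the sigma-function normalization $\sigma(u)=u+O(u^3)$ from Section~\ref{2021.11.20.1111}.)

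Finally I would combine these two ingredients. Applying the inversion to $\xi_J(S)=(x,y)$ and substituting the explicit expressions for $x$ and $y$ from (\ref{2021202120212021.7.12.1}), solving the first equation for $s$ gives the stated formula
\begin{gather*}
s=a_J+\frac{f_J'(a_J)}{\wp_{\widetilde{E}_J}(u)-f_J''(a_J)/6},
\end{gather*}
and substituting this expression for $(s-a_J)^2$ into $y=f_J'(a_J)t/(s-a_J)^2=-\wp_{\widetilde{E}_J}'(u)/2$ and solving for $t$ produces the second formula. The only real care needed is the bookkeeping at infinity (the matching of base points and the sign of the uniformizer), which I expect to be the most delicate step; once that is settled the rest is a purely algebraic manipulation of (\ref{2021202120212021.7.12.1}).
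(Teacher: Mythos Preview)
Your proposal is correct and follows essentially the same approach as the paper's own proof: compute the pullback $\xi_J^{*}(-{\rm d}x/(2y))={\rm d}s/(2t)$, identify $u$ with the Weierstrass Abel map on $\widetilde{E}_J$ so that $x=\wp_{\widetilde{E}_J}(u)$ and $y=-\wp_{\widetilde{E}_J}'(u)/2$, and then invert (\ref{2021202120212021.7.12.1}) algebraically. Your extra care with the base-point matching and the sign at infinity is more explicit than what the paper records, but the underlying argument is the same.
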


\begin{proof}
Since the pullback of the holomorphic one form $-{\rm d}x/(2y)$ on $\widetilde{E}_J$ with respect to $\xi_J$ is~${\rm d}s/(2t)$, we have
\begin{gather*}
u=\int_{(a_J,0)}^{(s,t)}\frac{{\rm d}s}{2t}=\int_{\infty}^{(x,y)}\bigg({-}\frac{{\rm d}x}{2y}\bigg).
\end{gather*}
We have
\begin{gather}
x=\wp_{\widetilde{E}_J}(u),\qquad
y=-\frac{\wp_{\widetilde{E}_J}'(u)}2.\label{2021202120212021.7.12.2}
\end{gather}
From (\ref{2021202120212021.7.12.1}) and (\ref{2021202120212021.7.12.2}), we obtain the statement of the proposition.
\end{proof}

We consider the elliptic curve in the Legendre form
\begin{gather*}
E_L=\big\{(s,t)\in\mathbb{C}^2\mid t^2=s(s-b_L)(s-c_L)\big\},
\end{gather*}
where $b_L,c_L\in\mathbb{C}$ satisfying $b_Lc_L(b_L-c_L)\neq0$,
and the elliptic curve in the Weierstrass normal form
\begin{gather*}
\widetilde{E}_L=\bigg\{(x,y)\in\mathbb{C}^2\,\bigg|\,y^2=x^3-\bigg(\frac{b_L^2+c_L^2-b_Lc_L}{3}\bigg)x
+\frac{(2b_L-c_L)(b_L+c_L)(2c_L-b_L)}{27}\bigg\}.
\end{gather*}
We have the isomorphism
\begin{gather*}
\xi_L\colon\quad E_L\to\widetilde{E}_L,\qquad (s,t)\mapsto(x,y),
\end{gather*}
with
\begin{gather}
x=s-\frac{b_L+c_L}{3},\qquad y=t.\label{2021202120212021.7.13.1}
\end{gather}
Let $\{\mathfrak{a}_L,\mathfrak{b}_L\}$ be a canonical basis in the one-dimensional homology group of the curve $E_L$. We~define the periods by
\begin{gather*}
2\Omega_L'=\int_{\mathfrak{a}_L}\bigg({-}\frac{{\rm d}s}{2t}\bigg),\qquad
2\Omega_L''=\int_{\mathfrak{b}_L}\bigg({-}\frac{{\rm d}s}{2t}\bigg).
\end{gather*}
We define the period lattice $\Lambda_L=\{2\Omega_L'm_1+2\Omega_L''m_2\mid m_1,m_2\in\mathbb{Z}\}$ and consider the Jacobian variety $\operatorname{Jac}(E_L)=\mathbb{C}/\Lambda_L$.
Let $\sigma_{\widetilde{E}_L}(u)$ be the sigma function associated with the elliptic curve~$\widetilde{E}_L$ and $\wp_{\widetilde{E}_L}=-\frac{{\rm d}^2}{{\rm d}u^2}\log\sigma_{\widetilde{E}_L}$, which is the Weierstrass elliptic function. We~consider the map
\begin{gather*}
I_L\colon\quad E_L\to\operatorname{Jac}(E_L),\qquad
S\mapsto\int_{\infty}^S\bigg({-}\frac{{\rm d}s}{2t}\bigg).
\end{gather*}
The map $I_L$ is the isomorphism.
Let $u=I_L(S)$ for $S=(s,t)\in E_L$.

\begin{Proposition}
We have
\begin{gather*}
s=\wp_{\widetilde{E}_L}(u)+\frac{b_L+c_L}{3},\qquad
t=-\frac{\wp_{\widetilde{E}_L}'(u)}2.
\end{gather*}
\end{Proposition}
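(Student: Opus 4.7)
The plan is to mirror the argument given for the Jacobi case, exploiting that the transformation $\xi_L$ in the Legendre setting is especially simple, being just a translation in the $x$-coordinate with the $y$-coordinate unchanged. First I would compute the pullback of the holomorphic one form $-{\rm d}x/(2y)$ on $\widetilde{E}_L$ under $\xi_L$. Using (\ref{2021202120212021.7.13.1}), namely $x=s-(b_L+c_L)/3$ and $y=t$, we have ${\rm d}x={\rm d}s$ and $2y=2t$, so the pullback equals $-{\rm d}s/(2t)$.

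Next I would translate the Abel integral. Since $\xi_L$ is an isomorphism of elliptic curves and sends the point at infinity of $E_L$ to the point at infinity of $\widetilde{E}_L$, the integral defining $I_L$ can be rewritten as
\begin{gather*}
u=\int_{\infty}^{(s,t)}\bigg({-}\frac{{\rm d}s}{2t}\bigg)=\int_{\infty}^{(x,y)}\bigg({-}\frac{{\rm d}x}{2y}\bigg),
\end{gather*}
where $(x,y)=\xi_L(s,t)$.

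Now the standard Weierstrass parametrization of $\widetilde{E}_L$ via its associated sigma function yields
\begin{gather*}
x=\wp_{\widetilde{E}_L}(u),\qquad y=-\frac{\wp_{\widetilde{E}_L}'(u)}{2},
\end{gather*}
exactly as used in the proof of the previous proposition. Substituting these expressions into (\ref{2021202120212021.7.13.1}) and solving for $(s,t)$ gives $s=\wp_{\widetilde{E}_L}(u)+(b_L+c_L)/3$ and $t=-\wp_{\widetilde{E}_L}'(u)/2$, which is the desired identity.

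I do not expect any serious obstacle here: the argument is entirely parallel to the Jacobi-form proposition, and is in fact simpler because the transformation $\xi_L$ is affine with trivial $y$-component, so no rational simplification of the integrand is needed and no branch point other than $\infty$ plays a distinguished role. The only point requiring a small check is that the basepoints of the Abel maps match under $\xi_L$, which is immediate since both $I_L$ and the Weierstrass parametrization of $\widetilde{E}_L$ use the point at infinity as the basepoint.
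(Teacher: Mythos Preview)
Your proposal is correct and follows essentially the same approach as the paper's proof: compute the pullback of $-{\rm d}x/(2y)$ under $\xi_L$ to get $-{\rm d}s/(2t)$, rewrite the integral defining $u$ accordingly, invoke the Weierstrass parametrization $x=\wp_{\widetilde{E}_L}(u)$, $y=-\wp_{\widetilde{E}_L}'(u)/2$, and substitute back via (\ref{2021202120212021.7.13.1}). The paper's argument is slightly terser but the logic is identical.
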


\begin{proof}
Since the pullback of the holomorphic one form $-{\rm d}x/(2y)$ on $\widetilde{E}_L$ with respect to $\xi_L$ is $-{\rm d}s/(2t)$, we have
\begin{gather*}
u=\int_{\infty}^{(s,t)}\bigg({-}\frac{{\rm d}s}{2t}\bigg)=\int_{\infty}^{(x,y)}\bigg({-}\frac{{\rm d}x}{2y}\bigg).
\end{gather*}
We have
\begin{gather}
x=\wp_{\widetilde{E}_L}(u),\qquad
y=-\frac{\wp_{\widetilde{E}_L}'(u)}2.\label{2021202120212021.7.13.2}
\end{gather}
From (\ref{2021202120212021.7.13.1}) and (\ref{2021202120212021.7.13.2}), we obtain the statement of the proposition.
\end{proof}

\section{The curve of genus 2 and the elliptic curves}\label{2021.11.22.1111}

\begin{Theorem}[{\cite{J,SV}}]\label{3.16}
Let $H$ be a hyperelliptic curve of genus $2$. There exist an elliptic curve~$W$ and a morphism $H\to W$ of degree $2$ if and only if $H$ is isomorphic to the hyperelliptic curve~$H'$ of genus $2$ defined by
\begin{gather}
H'=\big\{(s,t)\in\mathbb{C}^2\mid t^2=\big(s^2-1\big)\big(s^2-e_1^2\big)\big(s^2-e_2^2\big)\big\},\label{3.12.2}
\end{gather}
where $e_1,e_2\in\mathbb{C}$ satisfying
\begin{gather}
e_1^2\neq0, 1,\qquad
e_2^2\neq0, 1,\qquad
e_1^2\neq e_2^2.\label{2021.11.12.1}
\end{gather}
\end{Theorem}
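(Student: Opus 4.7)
For the \emph{if} direction, one notes that $(s,t) \mapsto (s^2, t)$ defines a degree~$2$ morphism from $H'$ onto the cubic $y^2 = (x-1)(x-e_1^2)(x-e_2^2)$, and conditions \eqref{2021.11.12.1} are precisely what make the three roots $1, e_1^2, e_2^2$ of this cubic distinct, so that the target is a nonsingular elliptic curve.

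For the \emph{only if} direction, let $\pi \colon H \to W$ have degree $2$ with deck involution $\tau$, and let $\iota$ denote the hyperelliptic involution of $H$. Since $\iota$ is central in $\operatorname{Aut}(H)$ it commutes with $\tau$, so $\tau$ descends to an involution $\bar\tau$ of $\mathbb{P}^1 = H/\iota$; moreover $\bar\tau \neq \mathrm{id}$, for otherwise $\tau = \iota$ would force $W = H/\iota$ to be rational rather than elliptic. Choose the affine coordinate $s$ on $\mathbb{P}^1$ so that $\bar\tau(s) = -s$; the six branch points of $H \to \mathbb{P}^1$ then form a $\bar\tau$-stable set, whose fixed points could only be $s = 0$ or $s = \infty$.

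The key step is to show that neither $0$ nor $\infty$ belongs to the branch locus. Parity rules out exactly one of them being a branch point, since the remaining five points could not be grouped into $\bar\tau$-orbits of size two. If both were branch points, the other four would be pairs $\pm a, \pm b$ and $H$ would carry the affine equation $t^2 = s(s^2-a^2)(s^2-b^2)$. Any lift of $\bar\tau$ to $H$ must then take the form $(s, t) \mapsto (-s, \phi(s,t))$ with $\phi^2 = -t^2$, forcing $\phi = \pm {\bf i}\, t$; but then $(s,t) \mapsto (-s, \pm {\bf i}\, t) \mapsto (s, -t) = \iota(s, t)$, so no lift of $\bar\tau$ is an involution, contradicting the fact that $\tau$ is. Hence the branch locus consists of three $\bar\tau$-pairs $\{\pm a, \pm b, \pm c\}$ with $abc \neq 0$ and $a^2, b^2, c^2$ pairwise distinct.

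Consequently $H$ has affine equation $t^2 = (s^2-a^2)(s^2-b^2)(s^2-c^2)$, and the substitution $(s, t) = (c u, c^3 v)$ yields $v^2 = (u^2-1)(u^2 - e_1^2)(u^2-e_2^2)$ with $(e_1, e_2) = (a/c, b/c)$, giving the required isomorphism $H \cong H'$; the nonsingularity conditions on $a, b, c$ translate exactly into \eqref{2021.11.12.1}. I expect the parity-and-lifting argument of the third paragraph to be the main technical obstacle, as it is the only place where the group-theoretic structure of $\operatorname{Aut}(H)$ is genuinely used.
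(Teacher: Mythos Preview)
Your proof is correct and follows the same strategy as the paper's appendix proof, just phrased geometrically rather than in function-field language: the paper's Galois generator $g$ is your deck involution $\tau$, its lemma normalizing to $g(X)=-X$ is your choice of coordinate with $\bar\tau(s)=-s$, and its elimination of the degree-$5$ model via $g^2(Y)=-Y$ is exactly your $\tau^2=\iota$ contradiction. Your intrinsic branch-locus/parity formulation is arguably cleaner than the paper's case split on $\deg f$, but the core argument is identical.
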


In appendix, we give a proof of this theorem.
In~\cite[Section~1.3.1]{BCMS2021}, \cite[Chapter~14]{Cassels}, \cite[Section~4]{F2}, and \cite[Theorem~4.2]{W} , the elliptic curves
\begin{gather*}
W_1=\big\{(s,t)\in\mathbb{C}^2\mid t^2=(s-1)\big(s-e_1^2\big)\big(s-e_2^2\big)\big\},
\\
W_2=\big\{(s,t)\in\mathbb{C}^2\mid t^2=(1-s)\big(1-e_1^2s\big)\big(1-e_2^2s\big)\big\}
\end{gather*}
and the morphisms
\begin{gather*}
\phi_1\colon\quad H'\to W_1,\qquad (s,t)\mapsto\big(s^2,t\big),\qquad
\phi_2\colon\quad H'\to W_2,\qquad (s,t)\mapsto\big(1/s^2, t/s^3\big)
\end{gather*}
are considered.
It is well known that the Jacobian variety of $H'$ is isogenous to $W_1\times W_2$ (cf.~\cite[Chapter~14]{Cassels}, \cite[Section~4]{F2}, \cite[Section~1]{FK}, \cite[Section~2]{Ku}, and \cite[Theorem~4.2]{W}). In~\cite[Section~7]{P}, many examples of the curves of genus 2 whose Jacobian varieties are isogenous to the direct products of elliptic curves are given.
These examples are important from the point of view of problems in number theory.

Throughout this paper, for a complex number $z$, we denote a complex number whose square is $z$ by $\sqrt{z}$.
For $\alpha, \beta\in\mathbb{C}$ satisfying
\begin{gather}
\alpha^2,\beta^2\neq0,1,\qquad
\alpha^2\neq\beta^2,\qquad
\alpha^2\beta^2\neq1,\label{2021.11.12.2}
\end{gather}
we consider the nonsingular hyperelliptic curve of genus $2$\footnote{The hyperelliptic curve of genus 2 defined by
\begin{gather*}
y^2=x(1-x)\big(1-\nu_1^2x\big)\big(1-\nu_2^2x\big)\big(1-\nu_3^2x\big),\qquad \nu_1, \nu_2,\nu_3\in\mathbb{C},
\end{gather*}
is called \textit{Richelot normal form} (cf.~\cite[p.~450]{O-Bolza-1887}, \cite[pp.~97, 236]{BEL-2012}).}
\begin{gather}
V=\big\{(x,y)\in\mathbb{C}^2\mid y^2=x(x-1)\big(x-\alpha^2\big)\big(x-\beta^2\big)\big(x-\alpha^2\beta^2\big)\big\}.\label{4.7.1}
\end{gather}
This curve~$V$ is considered in \cite{BE0, BE, BE2, BCMS2021, BEL, BEL-2012, Enolskii-Salerno-1996}.
For $i=2,4,6,8$, we define $\lambda_i$ such that the following relation holds:
\begin{gather*}
x(x-1)\big(x-\alpha^2\big)\big(x-\beta^2\big)\big(x-\alpha^2\beta^2\big)=x^5+\lambda_2x^4+\lambda_4x^3+\lambda_6x^2+\lambda_8x,
\end{gather*}
i.e., we set
\begin{gather*}
\lambda_2=-1-\alpha^2-\beta^2-\alpha^2\beta^2,\qquad
\lambda_4=\alpha^2+\beta^2+2\alpha^2\beta^2+\alpha^4\beta^2+\alpha^2\beta^4,
\\
\lambda_6=-\alpha^4\beta^4-\alpha^2\beta^4-\alpha^4\beta^2-\alpha^2\beta^2,\qquad
\lambda_8=\alpha^4\beta^4.
\end{gather*}

\begin{Proposition}[{\cite[p.~235]{BE0}, \cite[p.~3442]{BE}, \cite[Proposition~1.10]{BCMS2021}, \cite[Theorem~14.1.1]{Cassels}}]
Given $e_1, e_2\in\mathbb{C}$ satisfying \eqref{2021.11.12.1}, the curve $H'$ is isomorphic to the curve~$V$ with
\begin{gather}
\alpha=\sqrt{\frac{(e_1+1)(e_2+1)}{(e_1-1)(e_2-1)}},\qquad
\beta=\sqrt{\frac{(e_1-1)(e_2+1)}{(e_1+1)(e_2-1)}}.\label{2021.11.12.3}
\end{gather}
Conversely, given $\alpha, \beta\in\mathbb{C}$ satisfying \eqref{2021.11.12.2}, the curve~$V$ is isomorphic to the curve $H'$ with
\begin{gather}
e_1=\frac{\alpha+\beta}{\alpha-\beta},\qquad
e_2=\frac{\alpha\beta+1}{\alpha\beta-1}.\label{2021.11.12.4}
\end{gather}
\end{Proposition}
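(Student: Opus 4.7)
My plan is to build an explicit isomorphism $H'\to V$ of the form $(s,t)\mapsto(x,y)$, with $x=\phi(s)$ a Möbius transformation matching the six branch points $\{\pm 1,\pm e_1,\pm e_2\}$ of $H'$ bijectively to the six branch points $\{0,1,\alpha^2,\beta^2,\alpha^2\beta^2,\infty\}$ of $V$, and $y$ a rational multiple of $t$ forced by the defining equations. Both directions of the proposition will then follow from this single isomorphism, together with a separate check that the formulas \eqref{2021.11.12.3} and \eqref{2021.11.12.4} are inverse to one another (up to the sign ambiguity in the square roots) and that the non-degeneracy hypotheses \eqref{2021.11.12.1} and \eqref{2021.11.12.2} correspond under them.

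To single out $\phi$, I observe that it must conjugate the non-hyperelliptic involution $(s,t)\mapsto(-s,t)$ of $H'$, which realizes the degree-$2$ cover $H'\to W_1$, to a Möbius involution of the $x$-line that pairs branch points of $V$. The involution $\rho(x)=\alpha^2\beta^2/x$ pairs them as $\{0,\infty\}$, $\{1,\alpha^2\beta^2\}$, $\{\alpha^2,\beta^2\}$, with fixed points $\pm\alpha\beta$. Requiring $\phi(\{0,\infty\})=\{\pm\alpha\beta\}$ together with the normalization $\phi(-1)=0$ (hence $\phi(1)=\infty$) pins down
\begin{gather*}
\phi(s)=\alpha\beta\,\frac{s+1}{s-1}.
\end{gather*}
The identities $(e_1+1)/(e_1-1)=\alpha/\beta$ and $(e_2+1)/(e_2-1)=\alpha\beta$ are immediate from \eqref{2021.11.12.4}, and yield $\phi(e_1)=\alpha^2$ and $\phi(e_2)=\alpha^2\beta^2$; the companion identities $\phi(-e_1)=\beta^2$ and $\phi(-e_2)=1$ then follow from $\phi(-s)=\rho(\phi(s))$.

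Next I would substitute $x=\phi(s)$ into each linear factor of the right-hand side of \eqref{4.7.1}. Each such factor becomes a ratio $[\text{linear in }s]/(s-1)$, which refactors as a constant multiple of $(s\pm 1)/(s-1)$ or $(s\pm e_j)/(s-1)$ via the identities above. Collecting the five factors, I expect the product to simplify to
\begin{gather*}
x(x-1)(x-\alpha^2)(x-\beta^2)(x-\alpha^2\beta^2)=\frac{\alpha^3\beta^3(\alpha\beta-1)^2(\alpha-\beta)^2}{(s-1)^6}\,(s^2-1)(s^2-e_1^2)(s^2-e_2^2),
\end{gather*}
so that the choice $y=Ct/(s-1)^3$, with $C$ any fixed square root of $\alpha^3\beta^3(\alpha\beta-1)^2(\alpha-\beta)^2$, extends $\phi$ to a morphism $H'\to V$. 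Since $\phi$ has a Möbius inverse and both curves are smooth and projective, this morphism is an isomorphism.

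Finally, the consistency checks reduce to short case analyses: each forbidden equality in \eqref{2021.11.12.2} (such as $\alpha^2\in\{0,1\}$, $\alpha^2=\beta^2$, or $\alpha^2\beta^2=1$) corresponds via the relations $\alpha/\beta=(e_1+1)/(e_1-1)$ and $\alpha\beta=(e_2+1)/(e_2-1)$ to a forbidden equality in \eqref{2021.11.12.1} (one of $e_i^2\in\{0,1\}$ or $e_1^2=e_2^2$), and vice versa; the round-trips $(\alpha,\beta)\to(e_1,e_2)\to(\alpha^2,\beta^2)$ and $(e_1,e_2)\to(\alpha^2,\beta^2)\to(e_1,e_2)$ are the identity. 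The only step with any real computational content is the polynomial factorization displayed above, and it is mechanical; no conceptual obstacle arises.
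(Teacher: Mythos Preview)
Your proposal is correct and takes essentially the same route as the paper: the paper does not prove this proposition directly but records in the immediately following Proposition~\ref{2021.11.22.3} the explicit isomorphism $\zeta\colon H'\to V$, $(s,t)\mapsto\bigl(\tfrac{(e_2+1)(s+1)}{(e_2-1)(s-1)},\,\tfrac{4(e_2+1)^2 t}{(e_2-1)^3\sqrt{e_1^2-1}(s-1)^3}\bigr)$, and since $(e_2+1)/(e_2-1)=\alpha\beta$ this is exactly your $\phi(s)=\alpha\beta\,\tfrac{s+1}{s-1}$, with your constant $C$ equal (up to the sign of the square root) to the paper's $4(e_2+1)^2/\bigl((e_2-1)^3\sqrt{e_1^2-1}\bigr)$. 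Your derivation via conjugating the involution $s\mapsto -s$ to $x\mapsto\alpha^2\beta^2/x$ gives a nice conceptual reason for the choice of $\phi$ that the paper does not spell out.
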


\begin{Proposition}[{\cite[p.~235]{BE0}, \cite[p.~3442]{BE}}]\label{2021.11.22.3}
Given $e_1, e_2\in\mathbb{C}$ satisfying \eqref{2021.11.12.1}, an isomorphism from $H'$ to $V$ with \eqref{2021.11.12.3} is given by
\begin{gather*}
\zeta\colon\quad H'\to V,\qquad (s,t)\mapsto (x,y)
\end{gather*}
with
\begin{gather*}
(x,y)=\bigg(\frac{(e_2+1)(s+1)}{(e_2-1)(s-1)}, \frac{4(e_2+1)^2t}{(e_2-1)^3\sqrt{e_1^2-1}(s-1)^3}\bigg).
\end{gather*}
Furthermore, given $\alpha, \beta\in\mathbb{C}$ satisfying \eqref{2021.11.12.2}, an isomorphism from $V$ to $H'$ with \eqref{2021.11.12.4} is given by
\begin{gather*}
\widetilde{\zeta}\colon\quad V\to H',\qquad (x,y)\mapsto(s,t)
\end{gather*}
with
\begin{gather*}
(s,t)=\bigg(\frac{x+\alpha\beta}{x-\alpha\beta},
\frac{8\alpha\beta\sqrt{\alpha\beta}y}{(\alpha\beta-1)(\alpha-\beta)(x-\alpha\beta)^3}\bigg).
\end{gather*}
\end{Proposition}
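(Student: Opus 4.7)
The proof is a direct algebraic verification, and I would carry it out in the following way.

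The plan is to verify each of the two maps by substituting the formulas into the defining equation of the target curve and reducing the identity to the defining equation of the source curve. For $\zeta$, I would first compute each factor $x, x-1, x-\alpha^2, x-\beta^2, x-\alpha^2\beta^2$ as a rational function of $s$ after substitution of $x=\frac{(e_2+1)(s+1)}{(e_2-1)(s-1)}$ and the formulas $\alpha^2\beta^2 = \frac{(e_2+1)^2}{(e_2-1)^2}$, $\alpha^2 = \frac{(e_1+1)(e_2+1)}{(e_1-1)(e_2-1)}$, $\beta^2 = \frac{(e_1-1)(e_2+1)}{(e_1+1)(e_2-1)}$ coming from \eqref{2021.11.12.3}. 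A routine cross-multiplication shows that each factor takes the shape
\begin{gather*}
x-1=\frac{2(s+e_2)}{(e_2-1)(s-1)},\qquad
x-\alpha^2\beta^2=\frac{2(e_2+1)(e_2-s)}{(e_2-1)^2(s-1)},
\\
x-\alpha^2=\frac{2(e_2+1)(e_1-s)}{(e_2-1)(e_1-1)(s-1)},\qquad
x-\beta^2=\frac{2(e_2+1)(e_1+s)}{(e_2-1)(e_1+1)(s-1)}.
\end{gather*}
Multiplying these together with $x$ itself gives a rational function whose numerator contains $(s+1)(s-1)(s^2-e_1^2)(s^2-e_2^2)=t^2$ (after absorbing signs) and whose denominator is $(e_2-1)^6(e_1^2-1)(s-1)^6$ up to an overall factor of $16(e_2+1)^4$. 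Comparing with the square of the prescribed $y$ shows the two are equal, which is the identity $y^2=x(x-1)(x-\alpha^2)(x-\beta^2)(x-\alpha^2\beta^2)$.

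For $\widetilde{\zeta}$ I would do the symmetric computation. Starting from $s=\frac{x+\alpha\beta}{x-\alpha\beta}$ and using $e_1=\frac{\alpha+\beta}{\alpha-\beta}$, $e_2=\frac{\alpha\beta+1}{\alpha\beta-1}$, direct cross-multiplication yields
\begin{gather*}
s^2-1=\frac{4\alpha\beta\,x}{(x-\alpha\beta)^2},\qquad
s^2-e_1^2=\frac{-4\alpha\beta(x-\alpha^2)(x-\beta^2)}{(x-\alpha\beta)^2(\alpha-\beta)^2},
\\
s^2-e_2^2=\frac{-4\alpha\beta(x-1)(x-\alpha^2\beta^2)}{(x-\alpha\beta)^2(\alpha\beta-1)^2}.
\end{gather*}
Their product equals $\frac{64\alpha^3\beta^3\,y^2}{(x-\alpha\beta)^6(\alpha-\beta)^2(\alpha\beta-1)^2}$ by the defining equation \eqref{4.7.1} of $V$, which is exactly the square of the prescribed $t$; this confirms $t^2=(s^2-1)(s^2-e_1^2)(s^2-e_2^2)$, i.e.\ that $(s,t)\in H'$.

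Finally, to see that $\zeta$ and $\widetilde{\zeta}$ are mutually inverse morphisms (so each is an isomorphism), I would plug $s=\frac{x+\alpha\beta}{x-\alpha\beta}$ into the $x$-coordinate of $\zeta$ (using $\alpha\beta=\frac{e_2+1}{e_2-1}$) and check that one recovers $x$; the $y$-coordinates then match because both maps multiply the function $t$ (respectively $y$) by an explicit nowhere-vanishing constant factor times a power of $(s-1)$ (respectively $(x-\alpha\beta)$), and these powers are compatible by the above computation. The only real obstacle is bookkeeping: one must keep the three families of linear factors in $s$ versus in $x$ straight, track the signs arising from rewritings like $e_1-s=-(s-e_1)$, and verify that the sign from $\sqrt{e_1^2-1}=(\alpha^2-\beta^2)(e_2-1)/(2(e_2+1))$ (using \eqref{2021.11.12.3}) is consistent with the choice of $\sqrt{\alpha\beta}$ used in defining $\widetilde{\zeta}$. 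Once these sign conventions are fixed as in the statement, the identities become mechanical.
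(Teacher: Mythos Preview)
Your proof is correct. The paper itself does not supply a proof of this proposition: it is quoted from \cite[p.~235]{BE0} and \cite[p.~3442]{BE} without argument, so there is nothing to compare against beyond observing that a direct substitution check---exactly what you outline---is the intended justification. Your factorizations of $x-1$, $x-\alpha^2$, $x-\beta^2$, $x-\alpha^2\beta^2$ and of $s^2-1$, $s^2-e_1^2$, $s^2-e_2^2$ are all correct, and the product identities match the squares of the prescribed $y$ and $t$ as you claim.
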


The elliptic curve $W_1$ with (\ref{2021.11.12.4}) is isomorphic to the elliptic curve in Legendre form
\begin{gather}
E_1=\bigg\{(X,Y)\in\mathbb{C}^2\,\bigg|\,
Y^2=X(X-1)\bigg(X-\frac{(\alpha-\beta)^2}{(\alpha\beta-1)^2}\bigg)\bigg\}\label{4.6.1a}
\end{gather}
by the morphism
\begin{gather*}
\xi_1\colon\quad W_1\to E_1,\qquad (s,t)\mapsto(X,Y)
\end{gather*}
with
\begin{gather*}
(X,Y)=\bigg(\frac{(\alpha-\beta)^2(s-1)}{4\alpha\beta}, \frac{(\alpha-\beta)^3t}{8\alpha\beta\sqrt{\alpha\beta}}\bigg).
\end{gather*}
The elliptic curve $W_2$ with (\ref{2021.11.12.4}) is isomorphic to the elliptic curve in Legendre form
\begin{gather}
E_2=\bigg\{(X,Y)\in\mathbb{C}^2\,\bigg|\,
Y^2=X(X-1)\bigg(X-\frac{(\alpha+\beta)^2}{(\alpha\beta+1)^2}\bigg)\bigg\}\label{4.6.2a}
\end{gather}
by the morphism
\begin{gather*}
\xi_2\colon\quad W_2\to E_2,\qquad (s,t)\mapsto(X,Y)
\end{gather*}
with
\begin{gather*}
(X,Y)=\bigg({-}\frac{(\alpha+\beta)^2(s-1)}{4\alpha\beta}, -\frac{(\alpha+\beta)^2(\alpha-\beta)(\alpha\beta-1)t}
{8\alpha\beta\sqrt{\alpha\beta}(\alpha\beta+1)}\bigg).
\end{gather*}

We consider the maps
\begin{gather*}
\varphi_i=\xi_i\circ\phi_i\circ\widetilde{\zeta}\colon\quad V\to E_i,\qquad i=1,2.
\end{gather*}
Then the maps $\varphi_i$, $i=1,2$, are described by
\begin{gather}
\varphi_1\colon\quad V\to E_1,\qquad
(x,y)\mapsto(X,Y)=\bigg(\frac{(\alpha-\beta)^2x}{(x-\alpha\beta)^2}, \frac{(\alpha-\beta)^2y}{(\alpha\beta-1)(x-\alpha\beta)^3}\bigg), \label{3.16.11}
\\
\varphi_2\colon\quad V\to E_2,\qquad (x,y)\mapsto(X,Y)=\bigg(\frac{(\alpha+\beta)^2x}{(x+\alpha\beta)^2}, \frac{-(\alpha+\beta)^2y}{(\alpha\beta+1)(x+\alpha\beta)^3}\bigg).\label{3.16.222}
\end{gather}

\begin{Remark}
For $P=(x,y)\in V\backslash\{\infty\}$ such that $x=\alpha\beta$, we have $\varphi_1(P)=\infty$.
For $P=(x,y)\in V\backslash\{\infty\}$ such that $x=-\alpha\beta$, we have $\varphi_2(P)=\infty$.
Furthermore, we have $\varphi_i(\infty)=(0,0)$ for $i=1,2$.
\end{Remark}

Let
\begin{gather*}
\kappa_1=\frac{\sqrt{-1} (\alpha-\beta)}{\sqrt{\big(1-\alpha^2\big)\big(1-\beta^2\big)}},\qquad
\kappa_2=\frac{\sqrt{-1} (\alpha+\beta)}{\sqrt{\big(1-\alpha^2\big)\big(1-\beta^2\big)}}.
\end{gather*}

\begin{Remark}
In~\cite[Section~7.1]{BE0}, \cite[Section~3.1.1]{BE}, \cite[Chapter~6]{BEL-2012}, \cite{Enolskii-Salerno-1996}, the elliptic curves
\begin{gather*}
E_+=\bigg\{(X_+, Y_+)\in\mathbb{C}^2\mid Y_+^2=X_+(1-X_+)\big(1-\kappa_1^2X_+\big)\bigg\},
\\
E_-=\bigg\{(X_-, Y_-)\in\mathbb{C}^2\mid Y_-^2=X_-(1-X_-)\big(1-\kappa_2^2X_-\big)\bigg\}
\end{gather*}
and the morphisms
\begin{gather*}
\pi_+\colon\quad V\to E_+,\qquad (x,y)\mapsto (X_+, Y_+)
\end{gather*}
with
\begin{gather*}
(X_+, Y_+)=\bigg(\frac{\big(1-\alpha^2\big)\big(1-\beta^2\big)x}{\big(x-\alpha^2\big)\big(x-\beta^2\big)}, \frac{-\sqrt{\big(1-\alpha^2\big)\big(1-\beta^2\big)}(x-\alpha\beta)y}{\big(x-\alpha^2\big)^2\big(x-\beta^2\big)^2}\bigg),
\end{gather*}
\begin{gather*}
\pi_-\colon\quad V\to E_-,\qquad (x,y)\mapsto (X_-, Y_-)
\end{gather*}
with
\begin{gather*}
(X_-, Y_-)=\bigg(\frac{\big(1-\alpha^2\big)\big(1-\beta^2\big)x}{\big(x-\alpha^2\big)\big(x-\beta^2\big)}, \frac{-\sqrt{\big(1-\alpha^2\big)\big(1-\beta^2\big)}(x+\alpha\beta)y}{\big(x-\alpha^2\big)^2\big(x-\beta^2\big)^2}\bigg)
\end{gather*}
are considered. We~have the isomorphisms
\begin{gather*}
\xi_+\colon\quad E_1\to E_+,\qquad (X,Y)\mapsto(X_+,Y_+)
\end{gather*}
with
\begin{gather*}
(X_+,Y_+)=\bigg(\frac{X}{\kappa_1^2(X-1)},
\frac{(1-\alpha\beta)\sqrt{\big(1-\alpha^2\big)\big(1-\beta^2\big)}Y}{(\alpha-\beta)^2(X-1)^2}\bigg),
\end{gather*}
and
\begin{gather*}
\xi_-\colon \quad E_2\to E_-,\qquad (X,Y)\mapsto(X_-,Y_-)
\end{gather*}
with
\begin{gather*}
(X_-,Y_-)=\bigg(\frac{X}{\kappa_2^2(X-1)},
\frac{(1+\alpha\beta)\sqrt{\big(1-\alpha^2\big)\big(1-\beta^2\big)}Y}{(\alpha+\beta)^2(X-1)^2}\bigg).
\end{gather*}
Furthermore, we have $\pi_+=\xi_+\circ\varphi_1$ and $\pi_-=\xi_-\circ\varphi_2$.
\end{Remark}

\begin{Remark}
In~\cite[Lemma 1.7 and Proposition 1.8]{BCMS2021}, for $i=1,2$, the elliptic curves
\begin{gather*}
\mathcal{E}_i=\big\{(\mathcal{X}_i,\mathcal{Y}_i)\in\mathbb{C}^2\mid\mathcal{Y}_i^2
=\mathcal{X}_i(\mathcal{X}_i-1)\big(\mathcal{X}_i-\kappa_i^2\big)\big\}
\end{gather*}
and the morphisms
\begin{gather*}
\pi_{\mathcal{E}_i}\colon\quad V\to \mathcal{E}_i,\qquad
(x,y)\mapsto (\mathcal{X}_i,\mathcal{Y}_i)
\end{gather*}
with
\begin{gather*}
(\mathcal{X}_i,\mathcal{Y}_i)=\bigg(\frac{\big(x-\alpha^2\big)\big(x-\beta^2\big)}{\big(1-\alpha^2\big)\big(1-\beta^2\big)x},
\frac{\big(x+(-1)^i\alpha\beta\big)y}{\big(1-\alpha^2\big)\big(1-\beta^2\big)\sqrt{\big(1-\alpha^2\big)\big(1-\beta^2\big)}x^2}\bigg)
\end{gather*}
are considered.
For $i=1,2$, we have the isomorphisms
\begin{gather*}
\overline{\xi}_i\colon\quad E_i\to\mathcal{E}_i,\qquad (X,Y)\mapsto(\mathcal{X}_i,\mathcal{Y}_i)
\end{gather*}
with
\begin{gather*}
(\mathcal{X}_i,\mathcal{Y}_i)=\left(\frac{\kappa_i^2(X-1)}{X},
\frac{\kappa_i^2(1+(-1)^i\alpha\beta)Y}{\sqrt{\big(1-\alpha^2\big)\big(1-\beta^2\big)}X^2}\right)
\end{gather*}
and $\pi_{\mathcal{E}_i}=\overline{\xi}_i\circ\varphi_i$.
\end{Remark}

\begin{Remark}
In~\cite[Section~1.3.4]{BCMS2021}, \cite[Section~6.1]{B-L-S-2013}, \cite[Section~3.3]{J.P.Serre-2020}, given two elliptic curves in Legendre form satisfying a condition, a method to construct a hyperelliptic curve of genus~2 whose Jacobian variety is
$(2,2)$-isogenous to the direct product of the two elliptic curves is introduced, which is called \textit{Legendre's glueing method}.
\end{Remark}

We consider the hyperelliptic curve~$V$ defined by (\ref{4.7.1}) and the elliptic curves $E_1$ and $E_2$ defined by (\ref{4.6.1a}) and (\ref{4.6.2a}), respectively. We~define the period lattice $\Lambda=\big\{2\omega'm_1+2\omega''m_2\mid m_1,m_2\in\mathbb{Z}^2\big\}$ and consider the Jacobian variety $\operatorname{Jac}(V)=\mathbb{C}^2/\Lambda$.
For $i=1,2$, let $\big\{\mathfrak{a}^{(i)},\mathfrak{b}^{(i)}\big\}$ be a canonical basis in the one-dimensional homology group of the curve $E_i$.
For $i=1,2$, we consider the holomorphic one form on $E_i$
\begin{gather*}
\Omega_i=-\frac{{\rm d}X}{2Y}.
\end{gather*}
We define the periods by
\begin{gather*}
2\Omega_i'=\int_{\mathfrak{a}^{(i)}}\Omega_i,\qquad
2\Omega_i''=\int_{\mathfrak{b}^{(i)}}\Omega_i.
\end{gather*}
We define the period lattice $\Lambda_i=\big\{2\Omega_i'm_1+2\Omega_i''m_2\mid m_1,m_2\in\mathbb{Z}\big\}$ and consider the Jacobian variety $\operatorname{Jac}(E_i)=\mathbb{C}/\Lambda_i$.
Let $\pi\colon \mathbb{C}^2\to\operatorname{Jac}(V)$ and $\pi_i\colon \mathbb{C}\to\operatorname{Jac}(E_i)$ be the natural projections for $i=1,2$.

In~\cite{P}, the curves of genus 2 defined by
\begin{gather*}
C_{48,1}=\big\{(s,t)\in\mathbb{C}^2\mid t^2=(s-2)(s+2)\big(s^4-10s^2-3\big)\big\},
\\
C_{48,2}=\big\{(s,t)\in\mathbb{C}^2\mid t^2=\big(s^2+s+2\big)\big(s^4+3s^3+21s^2-27s+18\big)\big\}
\end{gather*}
are considered.
The groups of torsion points over $\mathbb{Q}$ in the Jacobian varieties of $C_{48,1}$ and $C_{48,2}$ are isomorphic to the cyclic groups of order 48~\cite[Theorem~6.2]{P}.
The curve $C_{48,1}$ is isomorphic to the curve of genus 2
\begin{gather*}
\widetilde{C}_{48,1}=\bigg\{(s,t)\in\mathbb{C}^2\,\bigg|\,
t^2=\big(s^2-1\big)\bigg(s^2-\frac{a_{48,1}^2}{4}\bigg)\bigg(s^2-\frac{b_{48,1}^2}{4}\bigg)\bigg\}
\end{gather*}
by the morphism
\begin{gather*}
C_{48,1}\to\widetilde{C}_{48,1},\qquad (s,t)\mapsto(s/2, t/8),
\end{gather*}
where
\begin{gather*}
a_{48,1}=\sqrt{5+2\sqrt{7}},\qquad b_{48,1}=\sqrt{5-2\sqrt{7}}.
\end{gather*}
The curve $\widetilde{C}_{48,1}$ is isomorphic to the curve~$V$ defined by (\ref{4.7.1}) with
\begin{gather}
\alpha=\sqrt{\frac{(a_{48,1}+2)(b_{48,1}+2)}{(a_{48,1}-2)(b_{48,1}-2)}},\qquad
\beta=\sqrt{\frac{(a_{48,1}-2)(b_{48,1}+2)}{(a_{48,1}+2)(b_{48,1}-2)}}.
\label{202120212021202120212021.7.19}
\end{gather}
Then the Jacobian variety $\operatorname{Jac}(V)$ is isogenous to $E_1\times E_2$ defined by (\ref{4.6.1a}) and (\ref{4.6.2a}) with (\ref{202120212021202120212021.7.19}).
The curve $C_{48,2}$ is isomorphic to the curve of genus 2
\begin{gather*}
\widetilde{C}_{48,2}=\bigg\{(s,t)\in\mathbb{C}^2\,\bigg|\,
t^2=\big(s^2-1\big)\bigg(s^2-\frac{a_{48,2}^2}{6}\bigg)\bigg(s^2-\frac{b_{48,2}^2}{6}\bigg)\bigg\}
\end{gather*}
by the morphism
\begin{gather*}
C_{48,2}\to\widetilde{C}_{48,2},\qquad
(s,t)\mapsto\bigg(\frac{\sqrt{-7}(s+1)}{s-3}, \frac{56\sqrt{-1}t}{3(s-3)^3}\bigg),
\end{gather*}
where
\begin{gather*}
a_{48,2}=\sqrt{-7-7\sqrt{-7}},\qquad
b_{48,2}=\sqrt{-7+7\sqrt{-7}}.
\end{gather*}
The curve $\widetilde{C}_{48,2}$ is isomorphic to the curve~$V$ defined by (\ref{4.7.1}) with
\begin{gather}
\alpha=\sqrt{\frac{\big(a_{48,2}+\sqrt{6}\big)\big(b_{48,2}+\sqrt{6}\big)}{\big(a_{48,2}-\sqrt{6}\big)\big(b_{48,2}-\sqrt{6}\big)}},\qquad
\beta=\sqrt{\frac{\big(a_{48,2}-\sqrt{6}\big)\big(b_{48,2}+\sqrt{6}\big)}{\big(a_{48,2}+\sqrt{6}\big)\big(b_{48,2}-\sqrt{6}\big)}}.
\label{202120212021202120212021.7.19.1}
\end{gather}
Then the Jacobian variety $\operatorname{Jac}(V)$ is isogenous to $E_1\times E_2$ defined by (\ref{4.6.1a}) and (\ref{4.6.2a}) with (\ref{202120212021202120212021.7.19.1}).

\section[The map from Jac(V) to Jac(E\_i)]
{The map from $\boldsymbol{\operatorname{Jac}(V)}$ to $\boldsymbol{\operatorname{Jac}(E_i)}$}

We denote the coordinates in $\mathbb{C}^2$ and $\mathbb{C}$ by ${}^t(u_1,u_3)$ and $u$, respectively.
We take the following two points on $V$:
\begin{gather*}
O_1=\big(\alpha\beta, \alpha\beta\sqrt{\alpha\beta}(\alpha\beta-1)(\alpha-\beta)\big),\qquad
O_2=\big({-}\alpha\beta, \alpha\beta\sqrt{-\alpha\beta}(\alpha\beta+1)(\alpha+\beta)\big).
\end{gather*}
We have $\varphi_i(O_i)=\infty$ for $i=1,2$.
Let ${\boldsymbol \omega}={}^t(\omega_1,\omega_3)$. We~consider the holomorphic maps
\begin{gather*}
I_1\colon\quad V\to\operatorname{Jac}(V),\qquad P\mapsto \int_{O_1}^P{\boldsymbol \omega},
\\
I_2\colon\quad V\to\operatorname{Jac}(V),\qquad P\mapsto \int_{O_2}^P{\boldsymbol \omega}.
\end{gather*}
Note that the elliptic curves $E_1$ and $E_2$ have the group structures (cf.~\cite[Chapter~III, Section~2]{silverman}).
Since $\operatorname{Jac}(V)$ is the Albanese variety of $V$, we have the unique holomorphic map
\begin{gather*}
\varphi_{1,*}\colon\ \operatorname{Jac}(V)\to E_1
\end{gather*}
such that $\varphi_{1,*}$ is the group homomorphism and $\varphi_1=\varphi_{1,*}\circ I_1$ and
we have the unique holomorphic map
\begin{gather*}
\varphi_{2,*}\colon\ \operatorname{Jac}(V)\to E_2
\end{gather*}
such that $\varphi_{2,*}$ is the group homomorphism and $\varphi_2=\varphi_{2,*}\circ I_2$ (cf.~\cite[Proposition~6.1, p.~104]{Milne}).
\begin{gather*}
\xymatrix{
\operatorname{Jac} (V) \ar[rd]^{\varphi_{1,*}} \\
V \ar[u]^{I_1} \ar[r]^{\varphi_1} &E_1,}\qquad
\xymatrix{
\operatorname{Jac} (V) \ar[rd]^{\varphi_{2,*}} \\
V \ar[u]^{I_2} \ar[r]^{\varphi_2} &E_2.}
\end{gather*}
For $i=1,2$, we consider the maps
\begin{gather*}
J_i\colon\quad E_i\to\operatorname{Jac}(E_i),\qquad S\mapsto\int_{\infty}^S\Omega_i.
\end{gather*}
It is well known that the map $J_i$ is the isomorphism.
We consider the maps
\begin{gather*}
\psi_{i,*}=J_i\circ \varphi_{i,*}\colon\quad \operatorname{Jac}(V)\to\operatorname{Jac}(E_i),\qquad i=1,2.
\end{gather*}
For $i=1,2$, the map $\psi_{i,*}$ is the holomorphic map between the complex manifolds $\operatorname{Jac}(V)$ and~$\operatorname{Jac}(E_i)$, and the map $\psi_{i,*}$ is the homomorphism between the groups $\operatorname{Jac}(V)$ and $\operatorname{Jac}(E_i)$.

\begin{Proposition}\label{3.6.1}
For $i=1,2$, we consider the maps
\begin{gather*}
\widetilde{\psi}_{i,*}\colon\quad \mathbb{C}^2\to\mathbb{C},\qquad {}^t(u_1,u_3)\mapsto a_iu_1+b_iu_3,
\end{gather*}
where
\begin{gather}
a_1=1-\alpha\beta,\qquad
b_1=\alpha\beta(1-\alpha\beta),\qquad
a_2=\alpha\beta+1,\qquad
b_2=-\alpha\beta(\alpha\beta+1).\label{3.8.123}
\end{gather}
Then we have $\psi_{i,*}\circ\pi=\pi_i\circ\widetilde{\psi}_{i,*}$, i.e., we have the following commutative diagram:
\begin{gather*}
 \begin{CD}
 \mathbb{C}^2 @>{\widetilde{\psi}_{i,*}}>> \mathbb{C} \\
 @V{\pi}VV @V{\pi_i}VV \\
 \operatorname{Jac}(V) @>{\psi_{i,*}} >> \operatorname{Jac}(E_i).
 \end{CD}
\end{gather*}
\end{Proposition}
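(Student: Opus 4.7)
The plan is to derive the commutative diagram from a direct computation of the pullback of the holomorphic 1-form $\Omega_i = -dX/(2Y)$ under $\varphi_i$, and then propagate the identity from the Abel--Jacobi image to all of $\mathbb{C}^2$ via the group homomorphism property.

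First, I would compute $\varphi_i^*\Omega_i$ explicitly. For $i=1$, differentiating the first component of (\ref{3.16.11}) gives
\begin{gather*}
dX = -\frac{(\alpha-\beta)^2(x+\alpha\beta)}{(x-\alpha\beta)^3}\,dx,
\end{gather*}
and dividing by $-2Y$ (substituting the second component of (\ref{3.16.11})) cancels the factor $(x-\alpha\beta)^3$ and leaves
\begin{gather*}
\varphi_1^*\Omega_1 = \frac{(\alpha\beta-1)(x+\alpha\beta)}{2y}\,dx = (1-\alpha\beta)\omega_1 + \alpha\beta(1-\alpha\beta)\omega_3 = a_1\omega_1 + b_1\omega_3.
\end{gather*}
An entirely analogous computation from (\ref{3.16.222}) yields $\varphi_2^*\Omega_2 = a_2\omega_1 + b_2\omega_3$. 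This is a routine but slightly tedious algebraic simplification and is the main piece of actual work.

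Second, I would use this pullback identity to establish the commutative diagram on the Abel--Jacobi image of $V$. For $P\in V$ and any path $\gamma$ from $O_i$ to $P$ on $V$, its image $\varphi_i(\gamma)$ joins $\varphi_i(O_i)=\infty$ to $\varphi_i(P)$ on $E_i$, so by change of variables
\begin{gather*}
J_i(\varphi_i(P)) = \pi_i\!\left(\int_{\varphi_i(\gamma)}\!\Omega_i\right) = \pi_i\!\left(\int_\gamma \varphi_i^*\Omega_i\right) = \pi_i(a_i u_1 + b_i u_3) = (\pi_i\circ\widetilde{\psi}_{i,*})(u_1,u_3),
\end{gather*}
where $(u_1,u_3) := \int_\gamma \boldsymbol{\omega}\in\mathbb{C}^2$ is the specific lift of $I_i(P)$ determined by $\gamma$. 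Combining this with the definition $J_i\circ\varphi_i = \psi_{i,*}\circ I_i$ yields $(\psi_{i,*}\circ\pi)(u_1,u_3) = (\pi_i\circ\widetilde{\psi}_{i,*})(u_1,u_3)$ for all such lifts.

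Third, I would extend this from the set $\Sigma$ of these lifts to all of $\mathbb{C}^2$. Both $\psi_{i,*}\circ\pi$ and $\pi_i\circ\widetilde{\psi}_{i,*}$ are continuous group homomorphisms from $(\mathbb{C}^2,+)$, hence their agreement extends to the subgroup of $\mathbb{C}^2$ generated by $\Sigma$. Since $\Sigma$ contains the period lattice $\Lambda$ (arising from closed loops at $O_i$) together with one lift of every point of $I_i(V)$, and since Jacobi's inversion theorem says that the Abel--Jacobi map $\operatorname{Sym}^2 V\to\operatorname{Jac}(V)$ is surjective, i.e., $I_i(V)+I_i(V)=\operatorname{Jac}(V)$, the subgroup generated by $\Sigma$ equals all of $\mathbb{C}^2$. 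This completes the proof. The main technical obstacle is the clean bookkeeping in step one; the extension argument in step three is standard.
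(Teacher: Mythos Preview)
Your proof is correct, and the key computation --- the pullback $\varphi_i^*\Omega_i = a_i\omega_1 + b_i\omega_3$ --- is exactly the one the paper performs. The difference lies in the wrapper around it. The paper first invokes the general structural fact (citing Birkenhake--Lange) that any holomorphic group homomorphism between complex tori lifts to a $\mathbb{C}$-linear map on the universal covers; this immediately guarantees the existence of some $a_i$, $b_i$ with $\psi_{i,*}\circ\pi=\pi_i\circ\widetilde{\psi}_{i,*}$, and the pullback computation then pins them down by comparing the induced action on holomorphic one-forms. You instead bypass that citation: you verify the identity directly on the lifts of $I_i(V)$ and then extend to all of $\mathbb{C}^2$ using that both sides are group homomorphisms and that $\langle\Sigma\rangle=\mathbb{C}^2$ via Jacobi inversion. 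Your route is more self-contained; the paper's is shorter once one grants the structure theorem for torus homomorphisms. A minor further difference: the paper obtains the $i=2$ case by the symmetry $\beta\mapsto-\beta$ (which leaves $V$ invariant but swaps $E_1$, $\varphi_1$ with $E_2$, $\varphi_2$), whereas you redo the computation.
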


\begin{proof}
Since $\psi_{1,*}$ is holomorphic and the group homomorphism, it is well known that there exist complex numbers $a_1$ and $b_1$ such that
$\psi_{1,*}\circ\pi=\pi_1\circ\widetilde{\psi}_{1,*}$, where
\begin{gather*}
\widetilde{\psi}_{1,*}\colon\quad\mathbb{C}^2\to\mathbb{C},\qquad {}^t(u_1,u_3)\mapsto a_1u_1+b_1u_3,
\end{gather*}
(e.g., \cite[Proposition~1.2.1]{B-H-2004}). We~consider the following commutative diagram:
\begin{gather*}
\xymatrix{
\operatorname{Jac} (V) \ar[r]^{\psi_{1,*}} & \operatorname{Jac}(E_1) \ar[d]^{J_1^{-1}} \\
V \ar[u]^{I_1} \ar[r]^{\varphi_1} &E_1.
}
\end{gather*}
We consider the holomorphic one forms ${\rm d}u$ on $\operatorname{Jac}(E_1)$ and ${\rm d}u_1, {\rm d}u_3$ on $\operatorname{Jac}(V)$.
The pullback of the holomorphic one form $\Omega_1$ on $E_1$ with respect to $J_1^{-1}$ is ${\rm d}u$.
The pullback of ${\rm d}u$ with respect to $\psi_{1,*}$ is $a_1{\rm d}u_1+b_1{\rm d}u_3$.
The pullback of $a_1{\rm d}u_1+b_1{\rm d}u_3$ with respect to $I_1$ is $a_1\omega_1+b_1\omega_3$.
Thus, the pullback of $\Omega_1$ with respect to $J_1^{-1}\circ \psi_{1,*}\circ I_1$ is $a_1\omega_1+b_1\omega_3$.
On the other hand, from the definition of $\varphi_1$ and the direct calculations, we find that the pullback of $\Omega_1$ with respect to $\varphi_1$ is $(1-\alpha\beta)\omega_1+\alpha\beta(1-\alpha\beta)\omega_3$.
Since $\omega_1$ and $\omega_3$ are linearly independent, we obtain $a_1=1-\alpha\beta$ and $b_1=\alpha\beta(1-\alpha\beta)$. We~can obtain the defining equation of the elliptic curve $E_2$ and the map~$\varphi_2$ by changing $\beta$ with $-\beta$ in the defining equation of the elliptic curve $E_1$ and the map $\varphi_1$ (see~(\ref{4.6.1a}), (\ref{4.6.2a}), (\ref{3.16.11}), and (\ref{3.16.222})).
On the other hand, when we change $\beta$ with $-\beta$ in the defining equation of the curve~$V$, the defining equation of $V$ does not change (see (\ref{4.7.1})). Therefore we can obtain $a_2$ and $b_2$ by changing $\beta$ with $-\beta$ in the expressions of~$a_1$ and~$b_1$, respectively.
Thus we have $a_2=\alpha\beta+1$ and $b_2=-\alpha\beta(\alpha\beta+1)$.
\end{proof}

Let $\mbox{Sym}^2(V)$ be the symmetric square of $V$.
Then $\mbox{Sym}^2(V)$ is a complex manifold of dimension 2. We~consider the holomorphic map
\begin{gather*}
I\colon\quad \mbox{Sym}^2(V)\to\operatorname{Jac}(V),\qquad (P,Q)\mapsto \int_{\infty}^{P}{\boldsymbol \omega}+\int_{\infty}^{Q}{\boldsymbol \omega}.
\end{gather*}
Let $\sigma({\bf u})$, ${\bf u}={}^t(u_1,u_3)\in\mathbb{C}^2$, be the sigma function associated with the curve~$V$, $\wp_{j,k}=-\partial_{u_k}\partial_{u_j}\log\sigma$, and $\wp_{j,k,\ell}=\partial_{u_{\ell}}\wp_{j,k}$, where $\partial_{u_j}=\frac{\partial}{\partial u_j}$.
For $i=1,2$, let $\sigma_{E_i}(u)$ be the sigma function associated with the elliptic curve $E_i$, $\wp_{E_i}=-\frac{{\rm d}^2}{{\rm d}u^2}\log\sigma_{E_i}$, which is the Weierstrass elliptic function, and $\wp_{E_i}'=\frac{\rm d}{{\rm d}u}\wp_{E_i}$.
Let $f_i({\bf u})=\wp_{E_i}(\psi_{i,*}({\bf u}))$ for ${\bf u}\in\operatorname{Jac}(V)$ and $i=1,2$.
Then the functions $f_1({\bf u})$ and $f_2({\bf u})$ are the meromorphic functions on $\operatorname{Jac}(V)$.
When we regard $f_1({\bf u})$ and $f_2({\bf u})$ as meromorphic functions on $\mathbb{C}^2$, from Proposition \ref{3.6.1}, we have
\begin{gather*}
f_1({\bf u})=\wp_{E_1}\big((1-\alpha\beta)(u_1+\alpha\beta u_3)\big),\qquad
f_2({\bf u})=\wp_{E_2}\big((\alpha\beta+1)(u_1-\alpha\beta u_3)\big),
\end{gather*}
where ${\bf u}={}^t(u_1,u_3)$.

\begin{Lemma}[{\cite{A-E-E-2004}, \cite[p.~39]{Baker-1907}, \cite[pp.~228, 229]{BEL-2012}}]
The following relations hold:
\begin{gather}
\wp_{1,1,1}^2=4\wp_{3,3}+4\lambda_4\wp_{1,1}+4\wp_{1,1}^3+4\wp_{1,3}\wp_{1,1}
+4\lambda_2\wp_{1,1}^2+4\lambda_6,\label{5.22.1}
\\
\wp_{1,1,1}\wp_{1,1,3}=2\lambda_8+2\wp_{1,3}^2-2\wp_{3,3}\wp_{1,1}+2\lambda_4\wp_{1,3}
+4\wp_{1,3}\wp_{1,1}^2+4\lambda_2\wp_{1,3}\wp_{1,1},\label{5.22.2}
\\
\wp_{1,1,3}^2=-4\wp_{3,3}\wp_{1,3}+4\lambda_2\wp_{1,3}^2+4\wp_{1,1}\wp_{1,3}^2,\label{5.22.3}
\\
\wp_{3,3}=-\wp_{1,1}\wp_{1,3}+\frac{1}{4}\wp_{1,1,1}^2-\wp_{1,1}^3-\lambda_2\wp_{1,1}^2
-\lambda_4\wp_{1,1}-\lambda_6,\label{3.23.111}
\\
\wp_{1,3,3}=\wp_{1,1,1}\wp_{1,3}-\wp_{1,1}\wp_{1,1,3},\label{3.23.112}
\\
\wp_{3,3,3}=2\wp_{1,1}\wp_{1,3,3}-\wp_{3,3}\wp_{1,1,1}-\wp_{1,3}\wp_{1,1,3}
+2\lambda_2\wp_{1,3,3}-\lambda_4\wp_{1,1,3}. \label{3.23.113}
\end{gather}
\end{Lemma}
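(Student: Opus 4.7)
The plan is to derive all six identities from the Jacobi inversion representation of the Kleinian functions. Generically, every point ${\bf u}\in\operatorname{Jac}(V)$ off the theta divisor can be written as ${\bf u}=I(P_1,P_2)$ with $P_i=(x_i,y_i)\in V$, and the classical trace formulas give $\wp_{1,1}({\bf u})=x_1+x_2$ and $\wp_{1,3}({\bf u})=-x_1x_2$. Inverting the Jacobian of the Abel map with respect to the basis $\omega_1=-x\,{\rm d}x/(2y)$, $\omega_3=-{\rm d}x/(2y)$ yields $\partial x_1/\partial u_1=2y_1/(x_2-x_1)$, $\partial x_1/\partial u_3=2x_2y_1/(x_1-x_2)$, and symmetric expressions for $\partial x_2/\partial u_1,\partial x_2/\partial u_3$. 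Differentiating the trace formulas produces
\begin{gather*}
\wp_{1,1,1}({\bf u})=-\frac{2(y_1-y_2)}{x_1-x_2},\qquad \wp_{1,1,3}({\bf u})=\frac{2(x_2y_1-x_1y_2)}{x_1-x_2},
\end{gather*}
together with analogous rational formulas for $\wp_{3,3}$, $\wp_{1,3,3}$, and $\wp_{3,3,3}$ in $(x_1,y_1,x_2,y_2)$.

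Once every quantity appearing in (\ref{5.22.1})--(\ref{3.23.113}) is written in this coordinate form, each identity becomes a polynomial statement modulo $y_i^2=M_2(x_i)$. Taking (\ref{3.23.111}) as the representative case, one computes
\begin{gather*}
\tfrac{1}{4}\wp_{1,1,1}^2=\frac{M_2(x_1)+M_2(x_2)-2y_1y_2}{(x_1-x_2)^2},
\end{gather*}
expands $M_2(x_1)+M_2(x_2)$ as a polynomial in the symmetric functions $s_1=x_1+x_2$, $s_2=x_1x_2$, and checks that the result agrees with $\wp_{3,3}+\wp_{1,1}\wp_{1,3}+\wp_{1,1}^3+\lambda_2\wp_{1,1}^2+\lambda_4\wp_{1,1}+\lambda_6$ after $s_1$, $s_2$ are replaced by $\wp_{1,1}$, $-\wp_{1,3}$. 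The identity (\ref{5.22.1}) is then just a rearrangement of (\ref{3.23.111}); relations (\ref{3.23.112}) and (\ref{3.23.113}) follow by applying $\partial_{u_3}$ to (\ref{3.23.111}) and simplifying via the chain rule; and (\ref{5.22.2}), (\ref{5.22.3}) arise by forming products of appropriate pairs among the cubic relations and eliminating the symmetric combinations $y_1y_2$ and $(y_1\pm y_2)^2$ in favour of $\wp_{j,k}$ and the $\lambda_{2i}$.

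The main obstacle is purely bookkeeping: one must consistently translate symmetric functions of $(x_1,x_2)$ and quadratic forms in $(y_1,y_2)$ into the coordinates $\wp_{1,1}$, $\wp_{1,3}$, while correctly collecting the $\lambda_{2i}$-dependence from the fivefold expansion of $M_2$. A cleaner alternative, which is the route taken in the cited references \cite{A-E-E-2004,Baker-1907,BEL-2012}, is to use the Klein fundamental bi-differential and its diagonal expansion to derive (\ref{3.23.111}) structurally as the canonical representation of $\wp_{3,3}$ on $\operatorname{Jac}(V)$; the remaining five identities then follow by $\partial_{u_1}$- and $\partial_{u_3}$-differentiation together with elementary algebraic combination. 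Since the content is entirely classical and the relations appear in the form stated in \cite[Theorem~2.2]{A-E-E-2004}, \cite[p.~39]{Baker-1907} and \cite[pp.~228,~229]{BEL-2012}, I would adopt the structural route and cite those derivations rather than reproduce the polynomial algebra in full.
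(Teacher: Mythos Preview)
The paper does not give a proof of this lemma at all: it is stated with references \cite{A-E-E-2004,Baker-1907,BEL-2012} and used as a black box in the proofs of Theorems~\ref{3.6.2} and~\ref{2.17.1}. So there is no ``paper's own proof'' to compare against, and your final suggestion---adopt the structural route and cite the standard derivations---is exactly what the authors do.

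Your outline via the Jacobi inversion coordinates is correct in spirit and matches the classical computation: the trace formulas $\wp_{1,1}=x_1+x_2$, $\wp_{1,3}=-x_1x_2$, the inverted Jacobian $\partial x_i/\partial u_j$, and the resulting expressions for $\wp_{1,1,1}$, $\wp_{1,1,3}$ are all right, and (\ref{3.23.111}) does reduce to a symmetric-function identity modulo $y_i^2=M_2(x_i)$ as you describe. One small inaccuracy: the claim that (\ref{3.23.112}) and (\ref{3.23.113}) follow by applying $\partial_{u_3}$ to (\ref{3.23.111}) is not quite self-contained, since that differentiation produces fourth-order quantities like $\wp_{1,1,1,3}$ which then require further relations to eliminate. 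In practice (\ref{3.23.112}) drops out immediately from the Jacobi inversion formulas you already wrote down (compute $\partial_{u_3}(-x_1x_2)$ and compare with $\wp_{1,1,1}\wp_{1,3}-\wp_{1,1}\wp_{1,1,3}$ directly), and (\ref{3.23.113}) similarly. This is a bookkeeping slip rather than a genuine gap, and the rest of your plan is sound.
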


\begin{Theorem}\label{3.6.2}
The hyperelliptic function $f_1$ is expressed in terms of $\wp_{1,1}$, $\wp_{1,3}$, and $\wp_{3,3}$ as follows:
\begin{gather}
f_1({\bf u})=\frac{-\alpha\beta\wp_{1,3}^2-A_1\wp_{1,3}-\alpha\beta(\wp_{1,1}-\alpha\beta) \big(\wp_{3,3}-\alpha^3\beta^3\big)}{(\alpha\beta-1)^2\big(\wp_{1,3}+\alpha^2\beta^2\big)^2},\label{4.13.112}
\end{gather}
where ${\bf u}\in\mathbb{C}^2$ and
\begin{gather*}
A_1=\wp_{3,3}+\alpha^2\beta^2\wp_{1,1}+\alpha\beta(\alpha\beta-1)^2(\alpha-\beta)^2-2\alpha^3\beta^3.
\end{gather*}
Here for simplicity we denote $\wp_{j,k}({\bf u})$ by $\wp_{j,k}$.
\end{Theorem}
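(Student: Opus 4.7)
The plan is to verify the identity on a Zariski-dense open subset of $\mathbb{C}^2$ via Jacobi inversion. For generic $\mathbf{u}\in\mathbb{C}^2$, one writes $\mathbf{u}\equiv\int_{\infty}^{P_1}\boldsymbol{\omega}+\int_{\infty}^{P_2}\boldsymbol{\omega}\pmod{\Lambda}$ with $P_j=(x_j,y_j)\in V$. The classical Kleinian (Bolza--Baker) formulas then give
\[
\wp_{1,1}(\mathbf{u})=x_1+x_2,\qquad \wp_{1,3}(\mathbf{u})=-x_1x_2,\qquad \wp_{1,1,1}(\mathbf{u})=-\frac{2(y_1-y_2)}{x_1-x_2},
\]
and $\wp_{3,3}(\mathbf{u})$ is then determined by~(\ref{3.23.111}).

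Next I would compute $\psi_{1,*}(\mathbf{u})$ explicitly. Using $\varphi_1^{*}\Omega_1=(1-\alpha\beta)(\omega_1+\alpha\beta\omega_3)$ (established in the proof of Proposition~\ref{3.6.1}) together with $\varphi_1(\infty)=(0,0)$, a direct computation yields
\[
(1-\alpha\beta)(u_1+\alpha\beta u_3)=J_1(\varphi_1(P_1))+J_1(\varphi_1(P_2))-2J_1((0,0)).
\]
The key observation is that $(0,0)\in E_1$ is a $2$-torsion point, so $2J_1((0,0))\in\Lambda_1$, whence $\psi_{1,*}(\mathbf{u})\equiv J_1(\varphi_1(P_1))+J_1(\varphi_1(P_2))\pmod{\Lambda_1}$. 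Applying the addition formula for the Weierstrass function of $E_1$ in the normalization of Section~\ref{2021.11.20.1111}, and using that $\wp_{E_1}(J_1(\varphi_1(P_j)))=X_j$ and $\wp_{E_1}'(J_1(\varphi_1(P_j)))=-2Y_j$ with $(X_j,Y_j)=\varphi_1(P_j)$, then yields
\[
f_1(\mathbf{u})=-(X_1+X_2)+(1+m_1)+\frac{(Y_1-Y_2)^2}{(X_1-X_2)^2},\qquad m_1=\frac{(\alpha-\beta)^2}{(\alpha\beta-1)^2}.
\]

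Substituting the explicit expressions~(\ref{3.16.11}) for $X_j,Y_j$ then produces a rational function in $x_j,y_j$, and the final task is to rewrite it as the right-hand side of~(\ref{4.13.112}). The identities
\[
(x_1-\alpha\beta)(x_2-\alpha\beta)=\alpha^2\beta^2-\alpha\beta\wp_{1,1}-\wp_{1,3},\qquad x_1(x_2-\alpha\beta)^2-x_2(x_1-\alpha\beta)^2=(x_1-x_2)(\wp_{1,3}+\alpha^2\beta^2),
\]
account for the denominator $(\alpha\beta-1)^2(\wp_{1,3}+\alpha^2\beta^2)^2$. Expanding $(Y_1-Y_2)^2=Y_1^2+Y_2^2-2Y_1Y_2$ together with $Y_j^2=X_j(X_j-1)(X_j-m_1)$ reduces everything to symmetric polynomials in $x_1,x_2$ and the single asymmetric quantity $y_1y_2$, which is expressed rationally in $\wp_{1,1},\wp_{1,3},\wp_{3,3}$ by combining the formula above for $\wp_{1,1,1}$ with~(\ref{5.22.1}) and~(\ref{3.23.111}). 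I expect this final symbolic reduction to be the main obstacle: the manipulation is mechanical but involves many terms, and the particular coefficient $A_1$ in the statement emerges only after systematic use of the defining relation $y_j^2=x_j(x_j-1)(x_j-\alpha^2)(x_j-\beta^2)(x_j-\alpha^2\beta^2)$ to cancel spurious poles.
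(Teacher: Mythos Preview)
Your approach is essentially the paper's: represent a generic $\mathbf{u}$ by a pair $(P_1,P_2)$ via Jacobi inversion, use that $\varphi_1(\infty)=(0,0)$ is $2$-torsion so that $\psi_{1,*}(\mathbf{u})$ corresponds to $\varphi_1(P_1)+\varphi_1(P_2)$ on $E_1$, apply the addition law, and reduce the resulting rational expression in $x_j,y_j$ to the $\wp$-functions. The only procedural difference is in the last step: the paper uses the full Jacobi inversion identity $2y_i=-\wp_{1,1,1}x_i-\wp_{1,1,3}$ (not just $\wp_{1,1,1}=-2(y_1-y_2)/(x_1-x_2)$), so that $(Y_2-Y_1)/(X_2-X_1)$ becomes directly a rational function of $\wp_{1,1},\wp_{1,3},\wp_{1,1,1},\wp_{1,1,3}$, and then eliminates the quadratic products $\wp_{1,1,1}^2,\wp_{1,1,1}\wp_{1,1,3},\wp_{1,1,3}^2$ via \eqref{5.22.1}--\eqref{5.22.3}; this is a bit cleaner than your detour through $Y_1^2+Y_2^2-2Y_1Y_2$ and the recovery of $y_1y_2$ from $\wp_{1,1,1}^2$, though both routes lead to the same simplification.
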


\begin{proof}
Let $U$ be the subset of $\mbox{Sym}^2(V)$ consisting of elements $(P,Q)$ such that $P,Q\in V\backslash\{\infty\}$, $x_1,x_2\neq\alpha\beta$, $x_1\neq x_2$, and $x_1x_2\neq\alpha^2\beta^2$,
where $P=(x_1,y_1)$ and $Q=(x_2,y_2)$. Then $U$ is an open set in $\mbox{Sym}^2(V)$. Let $U'=I(U)$.
Since the restriction of $I$ to $U$ is injective, $U'$ is an open set in $\operatorname{Jac}(V)$. We~take a point ${\bf u}\in U'$. Then there exists $(P,Q)\in U$ such that ${\bf u}=I\left((P,Q)\right)$. We~have
\begin{gather*}
{\bf u}=\int_{\infty}^{P}{\boldsymbol \omega}+\int_{\infty}^{Q}{\boldsymbol \omega}=\int_{O_1}^{P}{\boldsymbol \omega}+\int_{O_1}^{Q}{\boldsymbol \omega}-2\int_{O_1}^{\infty}{\boldsymbol \omega}.
\end{gather*}
Let $P=(x_1,y_1)$ and $Q=(x_2,y_2)$. We~have $\varphi_1(\infty)=(0,0)$, which is a branch point of $E_1$.
Since the map $\varphi_{1,*}$ is the group homomorphism, we have
\begin{align*}
\varphi_{1,*}({\bf u})&=\varphi_{1,*}\bigg(\int_{O_1}^{P}{\boldsymbol \omega}\bigg) +\varphi_{1,*}\bigg(\int_{O_1}^{Q}{\boldsymbol \omega}\bigg) -2\varphi_{1,*}\bigg(\int_{O_1}^{\infty}{\boldsymbol \omega}\bigg)
\\
&=\varphi_1(P)+\varphi_1(Q)-2\varphi_1(\infty)=\varphi_1(P)+\varphi_1(Q).
\end{align*}
Let $\varphi_1(P)=(X_1,Y_1)$ and $\varphi_1(Q)=(X_2,Y_2)$.
Then we have
\begin{gather}
X_i=\frac{(\alpha-\beta)^2x_i}{(x_i-\alpha\beta)^2},\qquad Y_i=\frac{(\alpha-\beta)^2y_i}{(\alpha\beta-1)(x_i-\alpha\beta)^3},\qquad i=1,2.\label{5.13.1}
\end{gather}
From $x_1\neq x_2$ and $x_1x_2\neq\alpha^2\beta^2$, we have $X_1\neq X_2$.
Let $\varphi_1(P)+\varphi_1(Q)=(X_3,Y_3)$. Then, from the definition of the addition defined in $E_1$ (cf.~\cite[Chapter~III, Section~2]{silverman}), we have
\begin{gather}
X_3=\left(\frac{Y_2-Y_1}{X_2-X_1}\right)^2-X_1-X_2+1+\frac{(\alpha-\beta)^2}{(\alpha\beta-1)^2}.\label{4.13.5}
\end{gather}
From the well-known solution of the Jacobi inversion problem, we have
\begin{gather}
x_1+x_2=\wp_{1,1}({\bf u}),\qquad x_1x_2=-\wp_{1,3}({\bf u}),\label{5.13.2}
\\
2y_i=-\wp_{1,1,1}({\bf u})x_i-\wp_{1,1,3}({\bf u}),\qquad i=1,2.\label{5.13.3}
\end{gather}
From (\ref{5.13.1}), (\ref{5.13.2}), and (\ref{5.13.3}), we have
\begin{gather}
\frac{Y_2-Y_1}{X_2-X_1}=B({\bf u}),\qquad X_1+X_2=C({\bf u}),\label{4.13.3}
\end{gather}
where
\begin{gather*}
B({\bf u})=\frac{\big(\wp_{1,1}\wp_{1,3}-3\alpha\beta\wp_{1,3}-\alpha^3\beta^3\big)\wp_{1,1,1}
-\big(\wp_{1,1}^2+\wp_{1,3}-3\alpha\beta\wp_{1,1}+3\alpha^2\beta^2\big)
\wp_{1,1,3}}{2(\alpha\beta-1)\big({-}\alpha^2\beta^2+\alpha\beta\wp_{1,1}
+\wp_{1,3}\big)\big(\alpha^2\beta^2+\wp_{1,3}\big)},
\\
C({\bf u})=\frac{(\alpha-\beta)^2\big(\alpha^2\beta^2\wp_{1,1}+4\alpha\beta\wp_{1,3} -\wp_{1,1}\wp_{1,3}\big)}{\big(\alpha^2\beta^2-\alpha\beta\wp_{1,1}-\wp_{1,3}\big)^2},
\end{gather*}
where for simplicity we denote $\wp_{j,k}({\bf u})$ and $\wp_{j,k,\ell}({\bf u})$ by $\wp_{j,k}$ and $\wp_{j,k,\ell}$, respectively.
From (\ref{4.13.5}) and (\ref{4.13.3}), for ${\bf u}\in U'$, we obtain
\begin{gather*}
f_1({\bf u})=\wp_{E_1}(\psi_{1,*}({\bf u}))=X_3=B({\bf u})^2-C({\bf u})+1+\frac{(\alpha-\beta)^2}{(\alpha\beta-1)^2}.
\end{gather*}
We replace $\wp_{1,1,1}^2$, $\wp_{1,1,1}\wp_{1,1,3}$, and $\wp_{1,1,3}^2$ in the above expression of $f_1({\bf u})$ with the right hand sides of (\ref{5.22.1}), (\ref{5.22.2}), and (\ref{5.22.3}), respectively.
Then we find that the relation (\ref{4.13.112}) holds on $U'$.
When we regard $f_1$ as a meromorphic function on $\mathbb{C}^2$, the relation (\ref{4.13.112}) holds on $\pi^{-1}(U')$.
Since~$\pi^{-1}(U')$ is an open set in~$\mathbb{C}^2$, the relation (\ref{4.13.112}) holds on $\mathbb{C}^2$.
\end{proof}

\begin{Corollary}\label{3.12.111}
The hyperelliptic function $f_2$ is expressed in terms of $\wp_{1,1}$, $\wp_{1,3}$, and $\wp_{3,3}$ as follows:
\begin{gather}
f_2({\bf u})=\frac{\alpha\beta\wp_{1,3}^2-A_2\wp_{1,3}+\alpha\beta(\wp_{1,1}+\alpha\beta)\big(\wp_{3,3}+\alpha^3\beta^3\big)} {(\alpha\beta+1)^2\big(\wp_{1,3}+\alpha^2\beta^2\big)^2},\label{4.13.333}
\end{gather}
where ${\bf u}\in\mathbb{C}^2$ and
\begin{gather*}
A_2=\wp_{3,3}+\alpha^2\beta^2\wp_{1,1}-\alpha\beta(\alpha\beta+1)^2(\alpha+\beta)^2+2\alpha^3\beta^3.
\end{gather*}
Here for simplicity we denote $\wp_{j,k}({\bf u})$ by $\wp_{j,k}$.
\end{Corollary}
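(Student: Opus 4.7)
The plan is to deduce Corollary \ref{3.12.111} from Theorem \ref{3.6.2} by exploiting the symmetry $\beta \mapsto -\beta$, which swaps the quadruple $(E_1, \varphi_1, O_1, \psi_{1,*})$ with $(E_2, \varphi_2, O_2, \psi_{2,*})$ while fixing the curve $V$ and its sigma function. This is far shorter than repeating the entire Jacobi-inversion-plus-group-law computation of Theorem \ref{3.6.2} with $\varphi_2$ in place of $\varphi_1$.

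For the invariance of the $\wp_{j,k}$, I would observe that the coefficients $\lambda_2$, $\lambda_4$, $\lambda_6$, $\lambda_8$ of the defining polynomial of $V$ written out in Section \ref{2021.11.22.1111} are polynomials in $\alpha^2$, $\beta^2$, and $\alpha^2\beta^2$, hence manifestly invariant under $\beta \mapsto -\beta$. Since the two-dimensional sigma function is uniquely determined by these coefficients via the normalization \eqref{4.27.1}, the hyperelliptic functions $\wp_{j,k}({\bf u}) = -\partial_{u_j}\partial_{u_k}\log\sigma({\bf u})$ on $\mathbb{C}^2$ are likewise unchanged. For the effect on $f_1$, a direct inspection of \eqref{4.6.1a}--\eqref{4.6.2a}, \eqref{3.16.11}--\eqref{3.16.222}, the definitions of $O_1$, $O_2$, and the formulas \eqref{3.8.123} for $a_i$, $b_i$ (this is exactly the observation already used at the end of the proof of Proposition \ref{3.6.1}) shows that each of these objects is carried to its counterpart with subscript $2$ by $\beta \mapsto -\beta$, so $f_2({\bf u})$ is obtained from $f_1({\bf u})$ by the same substitution.

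It then remains to apply $\beta \mapsto -\beta$ to the explicit identity \eqref{4.13.112}. Using the elementary substitutions $\alpha\beta \mapsto -\alpha\beta$, $\alpha^3\beta^3 \mapsto -\alpha^3\beta^3$, $(\alpha\beta-1)^2 \mapsto (\alpha\beta+1)^2$, $(\alpha-\beta)^2 \mapsto (\alpha+\beta)^2$, together with the fact that $\alpha^2\beta^2$ and the $\wp_{j,k}$ are fixed, the right-hand side of \eqref{4.13.112} transforms into the right-hand side of \eqref{4.13.333}, with $A_1$ becoming the claimed $A_2$. I do not expect any serious obstacle; the only subtle point is to confirm that the auxiliary choices (the basepoints $O_i$, whose $y$-coordinates involve $\sqrt{\alpha\beta}$ and $\sqrt{-\alpha\beta}$, the homology basis, and the period lattice) transform consistently under $\beta \mapsto -\beta$. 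Should this bookkeeping look awkward, one can simply rerun the proof of Theorem \ref{3.6.2} verbatim with $O_2$, $\varphi_2$ from \eqref{3.16.222}, and the addition law on $E_2$; invoking the same relations \eqref{5.22.1}--\eqref{5.22.3} and \eqref{5.13.2}--\eqref{5.13.3} would then yield \eqref{4.13.333} directly.
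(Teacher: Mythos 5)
Your proposal is correct and follows essentially the same route as the paper: the paper's proof of Corollary \ref{3.12.111} is precisely the observation that, as in the proof of Proposition \ref{3.6.1}, replacing $\beta$ with $-\beta$ leaves the curve $V$ (hence $\sigma$ and the $\wp_{j,k}$) unchanged while exchanging $(E_1,\varphi_1,f_1)$ with $(E_2,\varphi_2,f_2)$, so \eqref{4.13.333} follows from \eqref{4.13.112} by that substitution. Your additional justification of the invariance of the $\lambda_{2i}$ and the consistency of the auxiliary data merely makes explicit what the paper leaves implicit.
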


\begin{proof}In the same way as the proof of Proposition~\ref{3.6.1}, we can obtain the expression (\ref{4.13.333}) of~$f_2$ by changing $\beta$ with $-\beta$ in the expression of $f_1$ given in (\ref{4.13.112}).
\end{proof}

\section[The map from Jac(E\_i) to Jac(V)]{The map from $\boldsymbol{\operatorname{Jac}(E_i)}$ to $\boldsymbol{\operatorname{Jac}(V)}$}

Since the degree of $\varphi_i$ is 2 for $i=1,2$, the preimage $\varphi_i^{-1}(S)$ consists of two elements with multiplicity for any $S\in E_i$. We~consider the morphisms
\begin{gather*}
\chi\colon\quad V\to V,\qquad (x,y)\mapsto(x,-y),
\\
\chi_i\colon\quad E_i\to E_i,\qquad (X,Y)\mapsto(X,-Y),\qquad i=1,2.
\end{gather*}
We consider the maps
\begin{gather*}
\varphi_1^*\colon\quad E_1\to\operatorname{Jac}(V),\qquad
S\to \int_{O_1}^{S_1}{\boldsymbol \omega}+\int_{\chi(O_1)}^{S_2}{\boldsymbol \omega},
\end{gather*}
where $\varphi_1^{-1}(S)=\{S_1,S_2\}$, and
\begin{gather*}
\varphi_2^*\colon\quad E_2\to\operatorname{Jac}(V),\qquad
T\to \int_{O_2}^{T_1}{\boldsymbol \omega}+\int_{\chi(O_2)}^{T_2}{\boldsymbol \omega},
\end{gather*}
where $\varphi_2^{-1}(T)=\{T_1,T_2\}$.

\begin{Lemma}\label{3.8.111}
We have $\varphi_1^{-1}(\infty)=\{O_1, \chi(O_1)\}$ and $\varphi_1^{-1}((0,0))=\{(0,0), \infty\}$.
For $S=(X,Y)\in E_1\backslash\{\infty, (0,0)\}$, let $\varphi_1^{-1}(S)=\{S_1,S_2\}$ and $S_i=(x_i,y_i)$ for $i=1,2$.
Then we have
\begin{gather*}
x_1=\alpha\beta+\frac{(\alpha-\beta)\big(\alpha-\beta+\sqrt{4\alpha\beta X+(\alpha-\beta)^2}\big)}{2X},
\\
y_1=\frac{(\alpha\beta-1)(\alpha-\beta)Y\big(\alpha-\beta+\sqrt{4\alpha\beta X+(\alpha-\beta)^2}\big)^3}{8X^3},
\\
x_2=\alpha\beta+\frac{(\alpha-\beta)\big(\alpha-\beta-\sqrt{4\alpha\beta X+(\alpha-\beta)^2}\big)}{2X},
\\
y_2=\frac{(\alpha\beta-1)(\alpha-\beta)Y\big(\alpha-\beta-\sqrt{4\alpha\beta X+(\alpha-\beta)^2}\big)^3}{8X^3}.
\end{gather*}
\end{Lemma}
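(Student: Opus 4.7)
The argument splits according to the three cases stated. Reading off the first coordinate of $\varphi_1$, namely $X=(\alpha-\beta)^2 x/(x-\alpha\beta)^2$, on the affine part of $V$ the equation $X=\infty$ forces $x=\alpha\beta$, while $X=0$ forces $x=0$; the remark preceding the lemma records that $\varphi_1(\infty)=(0,0)$. My plan is to handle these two special fibers separately and then to invert $\varphi_1$ on a generic fiber by solving a single quadratic.

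For the fiber over $\infty\in E_1$, I substitute $x=\alpha\beta$ into the defining polynomial of $V$ and collect the resulting factors $\alpha\beta$, $(\alpha\beta-1)$, $-\alpha(\alpha-\beta)$, $\beta(\alpha-\beta)$, and $-\alpha\beta(\alpha\beta-1)$; the product collapses to
\begin{equation*}
y^2 = \alpha^3\beta^3(\alpha\beta-1)^2(\alpha-\beta)^2,
\end{equation*}
so the two preimages are exactly $O_1$ and its hyperelliptic involute $\chi(O_1)$. For the fiber over $(0,0)\in E_1$, the affine part of $\varphi_1^{-1}$ forces $x=0$, whence the defining equation of $V$ gives $y=0$; combined with $\varphi_1(\infty)=(0,0)$, the fiber is $\{(0,0),\infty\}$.

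For a generic point $S=(X,Y)\in E_1\setminus\{\infty,(0,0)\}$, clearing denominators in the first coordinate of $\varphi_1$ yields the quadratic
\begin{equation*}
X x^2 - \bigl(2\alpha\beta X+(\alpha-\beta)^2\bigr)x + \alpha^2\beta^2 X = 0
\end{equation*}
in $x$. A short expansion shows that its discriminant reduces to $(\alpha-\beta)^2\bigl(4\alpha\beta X+(\alpha-\beta)^2\bigr)$, and rewriting the two roots in the form $\alpha\beta + (\alpha-\beta)\bigl((\alpha-\beta)\pm\sqrt{4\alpha\beta X+(\alpha-\beta)^2}\bigr)/(2X)$ reproduces the stated expressions for $x_1$ and $x_2$. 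The corresponding $y$-coordinates are then read off from $y=(\alpha\beta-1)(x-\alpha\beta)^3 Y/(\alpha-\beta)^2$: cubing the expression for $x_i-\alpha\beta$ and simplifying yields precisely the stated $y_1$ and $y_2$. That these pairs actually lie on $V$ is automatic, since $\varphi_1$ is a morphism of curves of degree $2$ and we have produced two preimages.

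I expect no serious obstacle. The only delicate point is the bookkeeping with square roots: a fixed choice of $\sqrt{\alpha\beta}$ pins down $O_1$, and flipping the sign of $\sqrt{4\alpha\beta X+(\alpha-\beta)^2}$ merely interchanges $S_1$ and $S_2$, so no genuine ambiguity arises. All remaining computations reduce to elementary polynomial manipulations verifiable by direct substitution.
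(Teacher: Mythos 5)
Your argument is essentially the paper's own proof, which consists precisely of "the definition of $\varphi_1$ and direct calculations": your treatment of the two special fibers, the quadratic $Xx^2-\big(2\alpha\beta X+(\alpha-\beta)^2\big)x+\alpha^2\beta^2X=0$ with discriminant $(\alpha-\beta)^2\big(4\alpha\beta X+(\alpha-\beta)^2\big)$, and the recovery of $y$ from $y=(\alpha\beta-1)(x-\alpha\beta)^3Y/(\alpha-\beta)^2$ are exactly that computation made explicit. The one sentence to tighten is the claim that the candidate pairs lie on $V$ "automatically" because $\varphi_1$ has degree $2$ --- as phrased this is circular, since calling them preimages presupposes they lie on $V$; but the point is settled by the same direct substitution you invoke elsewhere (using $Y^2=X(X-1)\big(X-(\alpha-\beta)^2/(\alpha\beta-1)^2\big)$ and the factorizations $X-1=-(x-\alpha^2)(x-\beta^2)/(x-\alpha\beta)^2$, etc., one checks $y_i^2$ equals the defining quintic of $V$ at $x_i$), so nothing essential is missing.
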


\begin{proof}
From the definition of $\varphi_1$ and the direct calculations, we obtain the statement of the lemma.
\end{proof}

\begin{Lemma}\label{5.14.1}
We have $\varphi_2^{-1}(\infty)=\{O_2, \chi(O_2)\}$ and $\varphi_2^{-1}((0,0))=\{(0,0), \infty\}$.
For $T=(X,Y)\in E_2\backslash\{\infty, (0,0)\}$, let $\varphi_2^{-1}(T)=\{T_1,T_2\}$ and $T_i=(x_i,y_i)$ for $i=1,2$.
Then we have
\begin{gather*}
x_1=-\alpha\beta+\frac{(\alpha+\beta)\big(\alpha+\beta+\sqrt{-4\alpha\beta X+(\alpha+\beta)^2}\big)}{2X},
\\
y_1=\frac{-(\alpha\beta+1)(\alpha+\beta)Y\big(\alpha+\beta+\sqrt{-4\alpha\beta X+(\alpha+\beta)^2}\big)^3}{8X^3},
\\
x_2=-\alpha\beta+\frac{(\alpha+\beta)\big(\alpha+\beta-\sqrt{-4\alpha\beta X+(\alpha+\beta)^2}\big)}{2X},
\\
y_2=\frac{-(\alpha\beta+1)(\alpha+\beta)Y\big(\alpha+\beta-\sqrt{-4\alpha\beta X+(\alpha+\beta)^2}\big)^3}{8X^3}.
\end{gather*}
\end{Lemma}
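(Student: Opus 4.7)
The plan is to deduce Lemma~\ref{5.14.1} from Lemma~\ref{3.8.111} by exploiting the symmetry $\beta \mapsto -\beta$, in the same spirit as the proof of Proposition~\ref{3.6.1} and Corollary~\ref{3.12.111}. The defining equation (\ref{4.7.1}) of $V$ is invariant under this substitution, while comparing (\ref{3.16.11}) and (\ref{3.16.222}) shows that $\varphi_2$ is obtained from $\varphi_1$ by replacing $\beta$ with $-\beta$. Consequently, every fiber-theoretic statement for $\varphi_1$ transports verbatim to $\varphi_2$ under this substitution: $E_1$ becomes $E_2$, the point $O_1$ becomes $O_2$, and the radicand $4\alpha\beta X+(\alpha-\beta)^2$ becomes $-4\alpha\beta X+(\alpha+\beta)^2$, exactly matching the assertion.

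For completeness, I would also sketch the direct computation from (\ref{3.16.222}), which is how Lemma~\ref{3.8.111} is proved. First, to identify $\varphi_2^{-1}(\infty)$, I would note that $(X,Y)=\infty$ requires either $x=-\alpha\beta$ with $x$ finite, or the limit coming from the point at infinity of $V$. The latter in fact maps to $(0,0)\in E_2$, since $X\sim (\alpha+\beta)^2/x\to 0$ and $Y\sim y/x^3\to 0$ as $x\to\infty$ on $V$. Substituting $x=-\alpha\beta$ into the defining equation of $V$ gives $y^2=-\alpha^3\beta^3(\alpha\beta+1)^2(\alpha+\beta)^2$, and the two square roots reproduce $O_2$ and $\chi(O_2)$. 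For $\varphi_2^{-1}((0,0))$ I would observe that $X=0$ forces $x=0$, hence $y=0$, producing $(0,0)\in V$, while the second preimage is $\infty\in V$ by the limit above.

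For generic $(X,Y)\in E_2\setminus\{\infty,(0,0)\}$, I would solve the defining relation $X(x+\alpha\beta)^2=(\alpha+\beta)^2x$, which rearranges to the quadratic $Xx^2+\bigl(2\alpha\beta X-(\alpha+\beta)^2\bigr)x+\alpha^2\beta^2 X=0$, with discriminant $(\alpha+\beta)^2\bigl((\alpha+\beta)^2-4\alpha\beta X\bigr)$. The quadratic formula, rewritten as $x=-\alpha\beta+(\,\cdots\,)/(2X)$, yields precisely the two expressions for $x_1$ and $x_2$ stated in the lemma. The corresponding $y$-values come from inverting the second component of (\ref{3.16.222}), namely $y=-(\alpha\beta+1)Y(x+\alpha\beta)^3/(\alpha+\beta)^2$, into which one substitutes $x+\alpha\beta=(\alpha+\beta)\bigl((\alpha+\beta)\pm\sqrt{-4\alpha\beta X+(\alpha+\beta)^2}\bigr)/(2X)$ and cubes; the factor $(\alpha+\beta)^3$ partially cancels the denominator to give the stated formula.

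There is no conceptual obstacle here; the only care needed is in tracking the signs in the discriminant and ensuring that the choice of square-root branch is coherent between the $x$-formula and the $y$-formula (the same $\pm$ must be used, which is automatic since $y$ is computed from $x$ by substitution). Because both derivations reduce to the same elementary calculation already carried out for Lemma~\ref{3.8.111}, I would present the argument succinctly as ``direct calculation from the definition of $\varphi_2$, or equivalently by applying $\beta\mapsto-\beta$ in Lemma~\ref{3.8.111}.''
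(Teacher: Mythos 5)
Your proposal is correct and follows essentially the same route as the paper: the paper's proof of this lemma is exactly the ``direct calculation from the definition of $\varphi_2$'' that you sketch (solving $X(x+\alpha\beta)^2=(\alpha+\beta)^2x$ and inverting the $Y$-component), and your checks of the fibers over $\infty$ and $(0,0)$ and of the discriminant are accurate. The additional observation that the statement also follows from Lemma~\ref{3.8.111} by the substitution $\beta\mapsto-\beta$ is sound and matches the symmetry argument the paper itself uses in nearby proofs (e.g., Corollary~\ref{3.12.111} and Lemma~\ref{3.10.1}).
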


\begin{proof}
From the definition of $\varphi_2$ and the direct calculations, we obtain the statement of the lemma.
\end{proof}

\begin{Lemma}\label{3.10.1}
For $i=1,2$, the map $\varphi_i^*$ is the holomorphic map between the complex manifolds~$E_i$ and $\operatorname{Jac}(V)$, and the map $\varphi_i^*$ is the homomorphism
between the groups $E_i$ and $\operatorname{Jac}(V)$.
\end{Lemma}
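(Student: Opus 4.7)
The plan is to identify $\varphi_i^*$ with the classical divisor-pullback map from $\operatorname{Pic}^0(E_i) \cong \operatorname{Jac}(E_i)$ to $\operatorname{Jac}(V)$, precomposed with the Abel--Jacobi isomorphism $J_i \colon E_i \to \operatorname{Jac}(E_i)$, and then to invoke the standard facts that this pullback is holomorphic and respects the group structure.

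First I would reformulate the definition of $\varphi_i^*$ using Lemmas~\ref{3.8.111} and~\ref{5.14.1}, which give $\varphi_i^{-1}(\infty) = \{O_i, \chi(O_i)\}$. Writing $\Phi_i^*$ for the divisor pullback along $\varphi_i$, this lets us identify
\begin{gather*}
\varphi_i^*(S) = I\bigl(\Phi_i^*(S) - \Phi_i^*(\infty)\bigr) \in \operatorname{Jac}(V),
\end{gather*}
where $I$ is the Abel--Jacobi map extended to degree-zero divisors. This rewriting makes well-definedness immediate, since the right-hand side does not depend on any labeling of the two preimages $\{S_1, S_2\}=\varphi_i^{-1}(S)$.

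For holomorphicity, I would exploit the symmetric-function description coming from Lemmas~\ref{3.8.111} and~\ref{5.14.1}: the quantities $x_1 + x_2$, $x_1 x_2$, $y_1 + y_2$, and $y_1 y_2$ involve the square root only through even powers and are therefore rational in the coordinates $(X,Y)$ on $E_i$. Hence $S \mapsto S_1 + S_2$ defines a morphism of complex manifolds from $E_i$ into $\operatorname{Sym}^2(V)$, where at the finitely many exceptional points (namely the branch points of $\varphi_i$ and the point $(0,0) \in E_i$ whose preimage contains $\infty \in V$) one interprets the image as an effective divisor on the smooth projective model. Composing with the holomorphic Abel--Jacobi map $\operatorname{Sym}^2(V) \to \operatorname{Jac}(V)$ gives the holomorphicity of $\varphi_i^*$.

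For the group homomorphism property, I would use that under the isomorphism $J_i$, the group law on $E_i$ (with identity $\infty$) coincides with addition of divisor classes in $\operatorname{Pic}^0(E_i)$. Because the divisor pullback $\Phi_i^*$ sends principal divisors to principal divisors (since $\Phi_i^*\operatorname{div}(f) = \operatorname{div}(f\circ\varphi_i)$), it descends to a group homomorphism $\operatorname{Pic}^0(E_i) \to \operatorname{Pic}^0(V) \cong \operatorname{Jac}(V)$. Transporting via $J_i$ and matching the base points $\infty \in E_i$ and $O_i + \chi(O_i) = \Phi_i^*(\infty)$ on $V$, the induced map is precisely $\varphi_i^*$, which is therefore a group homomorphism. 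The main obstacle I anticipate is the clean verification that the map $S \mapsto S_1 + S_2$ extends holomorphically across the exceptional points of $E_i$; the algebraic homomorphism property is then a formal consequence of the pullback of principal divisors, once the base-point accounting is carefully aligned with the choices $O_i$ and $\chi(O_i)$.
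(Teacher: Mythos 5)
Your proposal is correct, but it follows a genuinely different route from the paper. You identify $\varphi_i^*$ with the pullback of degree-zero divisor classes, $S\mapsto I\bigl(\Phi_i^*(S)-\Phi_i^*(\infty)\bigr)$, get holomorphy from the fiber-divisor map $E_i\to\operatorname{Sym}^2(V)$ (via the rational symmetric functions of the preimage coordinates) composed with the Abel--Jacobi map, and get the homomorphism property purely algebraically from the fact that $\Phi_i^*$ sends principal divisors to principal divisors, so it descends to $\operatorname{Pic}^0(E_i)\to\operatorname{Pic}^0(V)$. The paper instead argues analytically: the explicit formulas of Lemma~\ref{3.8.111} show $\varphi_1^*$ is holomorphic on $E_1'=E_1\setminus\{(0,0),D,\chi_1(D),\infty\}$ (the deleted points being exactly where a preimage hits $\infty\in V$ or the two preimages collide), the map is bounded near these four points and hence extends holomorphically by the removable-singularity theorem, and the homomorphism property then follows from the rigidity of holomorphic maps between complex tori once one checks $\varphi_1^*(\infty)={\bf 0}$, which is immediate from $\varphi_1^{-1}(\infty)=\{O_1,\chi(O_1)\}$; the case $i=2$ is obtained by the substitution $\beta\mapsto-\beta$. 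Your approach buys a more conceptual proof in which the group-homomorphism property needs no torus rigidity and both $i=1,2$ are treated uniformly; the paper's approach is more elementary and self-contained given the closed formulas already at hand. Note that the one step you flag as the main obstacle --- the holomorphic extension of $S\mapsto S_1+S_2$ across the exceptional points --- is precisely what the paper's boundedness argument disposes of, so if you prefer not to invoke the general fact that a finite morphism of smooth curves induces a morphism to the symmetric product, you can close your gap by the same local boundedness plus removable-singularity argument applied directly to $\varphi_i^*$.
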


\begin{proof}
We consider the following point on $E_1$
\begin{gather*}
D=\bigg({-}\frac{(\alpha-\beta)^2}{4\alpha\beta},
\frac{(\alpha-\beta)^2(\alpha+\beta)(\alpha\beta+1)}{8\alpha\beta\sqrt{-\alpha\beta}(\alpha\beta-1)}\bigg).
\end{gather*}
Let $E_1'=E_1\backslash\{(0,0), D, \chi_1(D), \infty\}$.
From the definition of $\varphi_1^*$ and Lemma \ref{3.8.111}, the map $\varphi_1^*$ is holomorphic on $E_1'$.
Since the map $\varphi_1^*$ is bounded around the points $(0,0), D, \chi_1(D)$, and $\infty$, the map $\varphi_1^*$ is holomorphic on $E_1$.
Since $\varphi_1^{-1}(\infty)=\{O_1, \chi(O_1)\}$, we have $\varphi_1^*(\infty)={\bf 0}$, where ${\bf 0}$ is the unit element of $\operatorname{Jac}(V)$.
Therefore the map $\varphi_1^*$ is the homomorphism between the groups~$E_1$ and $\operatorname{Jac}(V)$.
By changing $\beta$ with $-\beta$, we can obtain the statement of the lemma for $\varphi_2^*$.
\end{proof}

We consider the maps
\begin{gather*}
\varphi^*\colon\quad E_1\times E_2\to\operatorname{Jac}(V),\qquad (S,T)\mapsto \varphi_1^*(S)+\varphi_2^*(T),
\\
\varphi_*\colon\quad \operatorname{Jac}(V)\to E_1\times E_2,\qquad {\bf u}\mapsto(\varphi_{1,*}({\bf u}), \varphi_{2,*}({\bf u})).
\end{gather*}

\begin{Remark}
It is known that the map $\varphi_*$ is a $(2,2)$-isogeny and $\varphi^*$ is its dual isogeny (cf.~\cite[Proposition~4.8]{B-W-2003}, \cite[Section~1.3.5, Theorem~2.40]{BCMS2021}, \cite[Lemma~2.6]{BD}, \cite[Section~4]{F2}, \cite[Section~1]{FK}, \cite[Section~2]{Ku},
\cite[Theorem~4.2]{W}).
\end{Remark}

\begin{Lemma} [{\cite[Theorem~2.40]{BCMS2021}, \cite[Lemma~2.6]{BD}, \cite[Section~4]{F2}, \cite[Theorem~4.2]{W}}]\label{3.9.1}
We have
\begin{gather*}
\varphi_*\circ\varphi^*(S,T)=(2S, 2T).
\end{gather*}
\end{Lemma}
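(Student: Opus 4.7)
The plan is to exploit that $\varphi_*=(\varphi_{1,*},\varphi_{2,*})$ is componentwise a group homomorphism, so that
\begin{gather*}
\varphi_*\circ\varphi^*(S,T)=\bigl(\varphi_{1,*}\varphi_1^*(S)+\varphi_{1,*}\varphi_2^*(T),\ \varphi_{2,*}\varphi_1^*(S)+\varphi_{2,*}\varphi_2^*(T)\bigr).
\end{gather*}
It therefore suffices to prove $\varphi_{i,*}\varphi_i^*(P)=2P$ on the diagonal and $\varphi_{i,*}\varphi_j^*(P)=0$ off the diagonal.

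For the diagonal term $\varphi_{1,*}\varphi_1^*(S)$, I would rewrite $\varphi_1^*(S)=I_1(S_1)+I_1(S_2)-I_1(\chi(O_1))$ using the identity $\int_Q^P\boldsymbol{\omega}=I_1(P)-I_1(Q)$ on the Jacobian, then apply $\varphi_{1,*}$ via the defining relation $\varphi_1=\varphi_{1,*}\circ I_1$. This produces $\varphi_1(S_1)+\varphi_1(S_2)-\varphi_1(\chi(O_1))$. Now $S_1,S_2\in\varphi_1^{-1}(S)$, and $\chi(O_1)$ has $x$-coordinate $\alpha\beta$ so that $\varphi_1(\chi(O_1))=\infty=0_{E_1}$ by Lemma~\ref{3.8.111}; hence the sum equals $2S$. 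The identity $\varphi_{2,*}\varphi_2^*(T)=2T$ follows by the same argument (or by the $\beta\mapsto-\beta$ symmetry relating the two morphisms).

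For the off-diagonal term $\varphi_{2,*}\varphi_1^*(S)$, an analogous unpacking using $\varphi_2=\varphi_{2,*}\circ I_2$ yields
\begin{gather*}
\varphi_{2,*}\varphi_1^*(S)=\bigl(\varphi_2(S_1)+\varphi_2(S_2)\bigr)-\bigl(\varphi_2(O_1)+\varphi_2(\chi(O_1))\bigr).
\end{gather*}
The second bracket vanishes because the hyperelliptic involution $\chi\colon(x,y)\mapsto(x,-y)$ satisfies $\varphi_2\circ\chi=\chi_2\circ\varphi_2$, which is visible directly from~(\ref{3.16.222}). For the first bracket, I would identify the deck involution $\tau_1$ of the degree-$2$ cover $\varphi_1$: equating the $X$-coordinates in $\varphi_1(x,y)=\varphi_1(x',y')$ forces $xx'=\alpha^2\beta^2$, and the $Y$-coordinates then give $y'=-\alpha^3\beta^3y/x^3$, so $\tau_1(x,y)=(\alpha^2\beta^2/x,\,-\alpha^3\beta^3y/x^3)$ and $S_2=\tau_1(S_1)$. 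Direct substitution of $\tau_1$ into~(\ref{3.16.222}) establishes the intertwining $\varphi_2\circ\tau_1=\chi_2\circ\varphi_2$, whence $\varphi_2(S_2)=-\varphi_2(S_1)$. The symmetric argument handles $\varphi_{1,*}\varphi_2^*(T)=0$, and the four pieces combine to $(2S,2T)$. The only non-formal step is the intertwining identity $\varphi_2\circ\tau_1=\chi_2\circ\varphi_2$; everything else is bookkeeping on Abel--Jacobi integrals.
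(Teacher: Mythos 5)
Your proof is correct and follows essentially the same route as the paper's: unpack $\varphi^*$ into Abel--Jacobi integrals based at $O_i$, apply $\varphi_{i,*}\circ I_i=\varphi_i$ together with the group-homomorphism property, and reduce everything to the computational fact that the two points in a fiber of one covering map under the other covering to a point and its inverse (and that $\varphi_i\circ\chi=\chi_i\circ\varphi_i$). The paper checks this by substituting the explicit preimage formulas of Lemma~\ref{5.14.1} into (\ref{3.16.11}), whereas you check the equivalent intertwining $\varphi_2\circ\tau_1=\chi_2\circ\varphi_2$ with the explicit deck involution $\tau_1(x,y)=\big(\alpha^2\beta^2/x,\,-\alpha^3\beta^3y/x^3\big)$ --- the same calculation in a slightly different packaging.
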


\begin{proof}
For the sake to be complete and self-contained, we give a proof of this lemma.
For $(S,T)\in E_1\times E_2$, we have
\begin{align*}
\varphi_{1,*}\circ\varphi^*(S,T)&=\varphi_{1,*}\bigg(\int_{O_1}^{S_1}{\boldsymbol \omega}+\int_{\chi(O_1)}^{S_2}{\boldsymbol \omega}+\int_{O_2}^{T_1}{\boldsymbol \omega}+\int_{\chi(O_2)}^{T_2}{\boldsymbol \omega}\bigg)
\\
&=\varphi_{1,*}\bigg(\int_{O_1}^{S_1}\!{\boldsymbol \omega}+\!\int_{O_1}^{S_2}\!{\boldsymbol \omega}-\!\int_{O_1}^{\chi(O_1)}\!{\boldsymbol \omega}+\!\int_{O_1}^{T_1}\!{\boldsymbol \omega}-\!\int_{O_1}^{O_2}\!{\boldsymbol \omega}+\!\int_{O_1}^{T_2}\!{\boldsymbol \omega}
-\!\int_{O_1}^{\chi(O_2)}\!{\boldsymbol \omega}\bigg)
\\
&=\varphi_1(S_1)+\varphi_1(S_2)-\varphi_1(\chi(O_1))+\varphi_1(T_1)-\varphi_1(O_2) +\varphi_1(T_2)-\varphi_1(\chi(O_2))
\\
&=2S+\varphi_1(T_1)+\varphi_1(T_2)-\varphi_1(O_2)-\varphi_1(\chi(O_2)),
\end{align*}
where $\varphi_1^{-1}(S)=\{S_1,S_2\}$ and $\varphi_2^{-1}(T)=\{T_1,T_2\}$. From (\ref{3.16.11}), Lemma \ref{5.14.1}, and the direct calculations, we can check $\varphi_1(T_2)=\chi_1(\varphi_1(T_1))$ and $\varphi_1(\chi(O_2))=\chi_1(\varphi_1(O_2))$.
Therefore we obtain
\begin{gather*}
\varphi_{1,*}\circ\varphi^*(S,T)=2S.
\end{gather*}
By changing $\beta$ with $-\beta$, we can find
\begin{gather*}
\varphi_{2,*}\circ\varphi^*(S,T)=2T.
\end{gather*}
Therefore we obtain the statement of the lemma.
\end{proof}

We consider the maps
\begin{gather*}
\psi_i^*=\varphi_i^*\circ J_i^{-1}\colon\quad \operatorname{Jac}(E_i)\to\operatorname{Jac}(V),\qquad i=1,2.
\end{gather*}
For $i=1,2$, the map $\psi_i^*$ is the holomorphic map between the complex manifolds $\operatorname{Jac}(E_i)$ and $\operatorname{Jac}(V)$, and the map $\psi_i^*$ is the homomorphism between the groups $\operatorname{Jac}(E_i)$ and $\operatorname{Jac}(V)$.
We~consider the maps
\begin{gather*}
\psi^*\colon\quad \operatorname{Jac}(E_1)\times\operatorname{Jac}(E_2)\to\operatorname{Jac}(V),\qquad {}^t(v_1,v_2)\mapsto \psi_1^*(v_1)+\psi_2^*(v_2),
\\
\psi_*\colon\quad \operatorname{Jac}(V)\to\operatorname{Jac}(E_1)\times\operatorname{Jac}(E_2),\qquad {\bf u}\mapsto {}^t(\psi_{1,*}({\bf u}), \psi_{2,*}({\bf u})).
\end{gather*}

\begin{Proposition}\label{2.15.1}
For $i=1,2$, we consider the maps
\begin{gather*}
\widetilde{\psi}_i^*\colon\quad \mathbb{C}\to\mathbb{C}^2,\qquad v\mapsto {}^t\left(c_iv, d_iv\right),
\end{gather*}
where
\begin{gather}
c_1=\frac{1}{1-\alpha\beta},\qquad d_1=\frac{1}{\alpha\beta(1-\alpha\beta)},\qquad c_2=\frac{1}{1+\alpha\beta},\qquad d_2=-\frac{1}{\alpha\beta(1+\alpha\beta)}.\label{3.9.2}
\end{gather}
Then we have $\psi_i^*\circ\pi_i=\pi\circ\widetilde{\psi}_i^*$, i.e., we have the following commutative diagram:
\begin{gather*}
 \begin{CD}
 \mathbb{C} @>{\widetilde{\psi}_i^*}>> \mathbb{C}^2 \\
 @V{\pi_i}VV @V{\pi}VV \\
 \operatorname{Jac}(E_i) @>{\psi_i^*} >> \operatorname{Jac}(V).
 \end{CD}
\end{gather*}
\end{Proposition}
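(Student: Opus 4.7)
The plan is to exploit that $\psi_i^* = \varphi_i^* \circ J_i^{-1}$ is a holomorphic group homomorphism between the complex tori $\operatorname{Jac}(E_i) = \mathbb{C}/\Lambda_i$ and $\operatorname{Jac}(V) = \mathbb{C}^2/\Lambda$ (by Lemma~\ref{3.10.1} and the fact that $J_i$ is an analytic isomorphism), so it lifts uniquely to a $\mathbb{C}$-linear map $\widetilde{\psi}_i^*\colon \mathbb{C} \to \mathbb{C}^2$ making the asserted diagram commute, by the same general principle invoked at the start of the proof of Proposition~\ref{3.6.1}. Any such lift automatically takes the form $v \mapsto {}^t(c_i v, d_i v)$ for some $c_i, d_i \in \mathbb{C}$, and the remaining task is to identify these constants.

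To pin down $c_i, d_i$, I would exploit Lemma~\ref{3.9.1}, which asserts $\varphi_* \circ \varphi^*(S,T) = (2S, 2T)$ for all $(S,T) \in E_1 \times E_2$. Specializing $T = \infty$ and using Lemma~\ref{5.14.1} to see that $\varphi_2^*(\infty) = {\bf 0}$ (since $\varphi_2^{-1}(\infty) = \{O_2, \chi(O_2)\}$, so both path integrals defining $\varphi_2^*$ vanish), we obtain $\varphi_{1,*} \circ \varphi_1^*(S) = 2S$ and $\varphi_{2,*} \circ \varphi_1^*(S) = \infty$; the symmetric argument with $S = \infty$ (using Lemma~\ref{3.8.111}) gives the analogues with indices swapped. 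Translating to the Jacobians via $\psi_i^* = \varphi_i^* \circ J_i^{-1}$ and $\psi_{i,*} = J_i \circ \varphi_{i,*}$ yields the clean relations
\begin{gather*}
\psi_{i,*} \circ \psi_i^* = 2 \cdot \operatorname{id}_{\operatorname{Jac}(E_i)},\qquad \psi_{j,*} \circ \psi_i^* = 0 \quad (j \neq i).
\end{gather*}

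Lifting these to the universal covers using the explicit linear form $\widetilde{\psi}_{j,*}(u_1,u_3) = a_j u_1 + b_j u_3$ supplied by Proposition~\ref{3.6.1}, the composition $\widetilde{\psi}_{j,*} \circ \widetilde{\psi}_i^*$ is multiplication by $a_j c_i + b_j d_i$; descending to $\operatorname{Jac}(E_j)$ it must agree with $2\delta_{ij}\cdot \operatorname{id}$. Since any $\mathbb{C}$-linear map $\mathbb{C} \to \mathbb{C}$ whose image is contained in the discrete set $\Lambda_j$ is identically zero, we obtain the exact scalar equations
\begin{gather*}
a_i c_i + b_i d_i = 2,\qquad a_j c_i + b_j d_i = 0 \quad (j \neq i).
\end{gather*}
Substituting the values from~(\ref{3.8.123}) and solving the resulting $2 \times 2$ system, whose determinant $a_1 b_2 - a_2 b_1 = -2\alpha\beta(1 - \alpha^2\beta^2)$ is nonzero by~(\ref{2021.11.12.2}), gives the values of $c_i, d_i$ claimed in~(\ref{3.9.2}) after a brief computation.

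The main obstacle I anticipate is cleanly deriving the off-diagonal identity $\psi_{j,*} \circ \psi_i^* = 0$ from Lemma~\ref{3.9.1}, which is stated only for the combined map $\varphi_* \circ \varphi^*$; the required separation hinges on the observation $\varphi_i^*(\infty) = {\bf 0}$, which in turn rests on the explicit preimage descriptions of Lemmas~\ref{3.8.111} and~\ref{5.14.1}. Once these identities are established, what remains is routine linear algebra.
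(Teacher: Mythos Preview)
Your argument is correct and is essentially the paper's own proof: both invoke the linear lift of a holomorphic homomorphism between tori, then use Lemma~\ref{3.9.1} together with the values $(a_i,b_i)$ from Proposition~\ref{3.6.1} to pin down $(c_i,d_i)$ via the matrix identity
\[
\begin{pmatrix}a_1&b_1\\a_2&b_2\end{pmatrix}\begin{pmatrix}c_1&c_2\\d_1&d_2\end{pmatrix}=\begin{pmatrix}2&0\\0&2\end{pmatrix}.
\]
The only cosmetic difference is that the paper works directly with the combined maps $\psi^*,\psi_*$ and the full $2\times2$ matrix equation, so the off-diagonal vanishing you worried about comes for free rather than via the specialization $T=\infty$.
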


\begin{proof}
For $i=1,2$, since $\psi_i^*$ is holomorphic and the group homomorphism, it is well known that there exist complex numbers $c_i$ and $d_i$ such that
$\psi_i^*\circ\pi_i=\pi\circ\widetilde{\psi}_i^*$, where
\begin{gather*}
\widetilde{\psi}_i^*\colon\quad \mathbb{C}\to\mathbb{C}^2,\qquad v\mapsto {}^t\left(c_iv, d_iv\right),
\end{gather*}
(e.g., \cite[Proposition~1.2.1]{B-H-2004}). We~consider the maps
\begin{gather*}
\widetilde{\psi}^*\colon\quad \mathbb{C}^2\to\mathbb{C}^2,\qquad \begin{pmatrix}v_1\\v_2\end{pmatrix}\mapsto\begin{pmatrix}c_1&c_2\\d_1&d_2\end{pmatrix}
\begin{pmatrix}v_1\\v_2\end{pmatrix}\!,
\\
\widetilde{\psi}_*\colon\quad \mathbb{C}^2\to\mathbb{C}^2,\qquad \begin{pmatrix}u_1\\u_3\end{pmatrix}\mapsto\begin{pmatrix}a_1&b_1\\a_2&b_2\end{pmatrix}
\begin{pmatrix}u_1\\u_3\end{pmatrix}\!,
\end{gather*}
where $a_1$, $b_1$, $a_2$, and $b_2$ are defined in Proposition~\ref{3.6.1}. We~consider the map
\begin{gather*}
\pi_{1,2}\colon\quad \mathbb{C}^2\to\operatorname{Jac}(E_1)\times\operatorname{Jac}(E_2),\qquad {}^t(v_1,v_2)\mapsto{}^t(\pi_1(v_1), \pi_2(v_2)).
\end{gather*}
Then we have the following commutative diagrams:
\begin{gather*}
 \begin{CD}
 \mathbb{C}^2 @>{\widetilde{\psi}^*}>> \mathbb{C}^2 \\
 @V{\pi_{1,2}}VV @V{\pi}VV \\
 \operatorname{Jac}(E_1)\times\operatorname{Jac}(E_2) @>{\psi^*} >> \operatorname{Jac}(V),
 \end{CD}\qquad
 \begin{CD}
 \mathbb{C}^2 @>{\widetilde{\psi}_*}>> \mathbb{C}^2 \\
 @V{\pi}VV @V{\pi_{1,2}}VV \\
 \operatorname{Jac}(V) @>{\psi_*} >>\operatorname{Jac}(E_1)\times\operatorname{Jac}(E_2).
 \end{CD}
\end{gather*}
Therefore we obtain the following commutative diagram:
\begin{gather*}
 \begin{CD}
 \mathbb{C}^2 @>{\widetilde{\psi}_*\circ\widetilde{\psi}^*}>> \mathbb{C}^2 \\
 @V{\pi_{1,2}}VV @V{\pi_{1,2}}VV \\
 \operatorname{Jac}(E_1)\times\operatorname{Jac}(E_2) @>{\psi_*\circ\psi^*} >> \operatorname{Jac}(E_1)\times\operatorname{Jac}(E_2).
 \end{CD}
\end{gather*}
We have
\begin{gather*}
\widetilde{\psi}_*\circ\widetilde{\psi}^*\colon\quad \mathbb{C}^2\to\mathbb{C}^2,\qquad
\begin{pmatrix}v_1\\v_2\end{pmatrix}
\mapsto\begin{pmatrix}a_1&b_1\\a_2&b_2\end{pmatrix}
\begin{pmatrix}c_1&c_2\\d_1&d_2\end{pmatrix}\begin{pmatrix}v_1\\v_2\end{pmatrix}\!.
\end{gather*}
From Lemma \ref{3.9.1}, we obtain
\begin{gather*}
\begin{pmatrix}a_1&b_1\\a_2&b_2\end{pmatrix}
\begin{pmatrix}c_1&c_2\\d_1&d_2\end{pmatrix}=\begin{pmatrix}2&0\\0&2\end{pmatrix}\!.
\end{gather*}
From (\ref{3.8.123}), we obtain (\ref{3.9.2}).
\end{proof}

For $i=1,2$, let ${\bf k}_i={}^t(c_i, d_i)$ and $K=({\bf k}_1, {\bf k}_2)=\left(\begin{smallmatrix}c_1&c_2\\d_1&d_2\end{smallmatrix}\right)$, where $c_1$, $d _1$, $c_2$, and $d_2$ are given in (\ref{3.9.2}).

\begin{Theorem}\label{2.17.1}
For $v\in\mathbb{C}$, the following relations hold:
\begin{gather}
\wp_{1,1}({\bf k}_1v)=2\alpha\beta+\frac{(\alpha-\beta)^2}{\wp_{E_1}(v)},\qquad \wp_{1,3}({\bf k}_1v)=-\alpha^2\beta^2,
\label{3.23.1}
\\
\wp_{3,3}({\bf k}_1v)=\alpha^2\beta^2\big\{(\alpha\beta-1)^2\wp_{E_1}(v)+2\alpha\beta\big\},
\label{3.23.2}
\\
\wp_{1,1,1}({\bf k}_1v)=\frac{(\alpha\beta-1)\wp_{E_1}'(v) \big\{(\alpha-\beta)^2+\alpha\beta\wp_{E_1}(v)\big\}} {\wp_{E_1}(v)^2},
\label{3.23.3}
\\
\wp_{1,1,3}({\bf k}_1v)=-\frac{\alpha^2\beta^2(\alpha\beta-1)\wp_{E_1}'(v)}{\wp_{E_1}(v)},
\label{3.23.4}
\\
\wp_{1,3,3}({\bf k}_1v)=\frac{\alpha^3\beta^3(\alpha\beta-1)\wp_{E_1}'(v)}{\wp_{E_1}(v)},
\label{3.23.5}
\\
\wp_{3,3,3}({\bf k}_1v)=-\frac{\alpha^3\beta^3(\alpha\beta-1) \big\{(\alpha\beta-1)^2\wp_{E_1}(v)+\alpha\beta\big\}\wp_{E_1}'(v)}{\wp_{E_1}(v)}.
\label{3.23.6}
\end{gather}
\end{Theorem}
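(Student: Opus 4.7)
The plan is to pull back the point ${\bf k}_1 v$ along the Abel map, so that the six identities become consequences of the classical solution of the Jacobi inversion problem applied to the divisor $\{S_1,S_2\}=\varphi_1^{-1}(S)$, where $S=J_1^{-1}(\pi_1(v))\in E_1$. By Proposition~\ref{2.15.1} together with $\psi_1^*=\varphi_1^*\circ J_1^{-1}$, the point $\pi({\bf k}_1 v)\in\operatorname{Jac}(V)$ equals $\varphi_1^*(S)$. Rewriting the base-point integrals,
\[
\varphi_1^*(S)=\int_{O_1}^{S_1}{\boldsymbol \omega}+\int_{\chi(O_1)}^{S_2}{\boldsymbol \omega}=I(S_1,S_2)-\left(\int_\infty^{O_1}{\boldsymbol \omega}+\int_\infty^{\chi(O_1)}{\boldsymbol \omega}\right),
\]
and the second parenthesis vanishes modulo the period lattice $\Lambda$ because $(O_1)+(\chi(O_1))-2(\infty)$ is the divisor of the rational function $x-\alpha\beta$ on $V$. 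Hence $\pi({\bf k}_1 v)=I(S_1,S_2)$ in $\operatorname{Jac}(V)$.

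On the open dense subset where $I(S_1,S_2)$ is a generic divisor, the Jacobi inversion formulas~(\ref{5.13.2}) and~(\ref{5.13.3}) used already in the proof of Theorem~\ref{3.6.2} give
\[
\wp_{1,1}({\bf k}_1 v)=x_1+x_2,\qquad \wp_{1,3}({\bf k}_1 v)=-x_1x_2,\qquad 2y_i=-\wp_{1,1,1}({\bf k}_1 v)\,x_i-\wp_{1,1,3}({\bf k}_1 v).
\]
Substituting the explicit expressions for $(x_i,y_i)$ in terms of $(X,Y)$ from Lemma~\ref{3.8.111}, together with the Weierstrass parametrisation $X=\wp_{E_1}(v)$, $Y=-\wp_{E_1}'(v)/2$ for $E_1$ in Legendre form arising from the Abel map $J_1$, the irrational quantity $\sqrt{4\alpha\beta X+(\alpha-\beta)^2}$ cancels in each of the symmetric combinations $x_1+x_2$, $x_1x_2$, $(y_2-y_1)/(x_1-x_2)$, and $(y_1x_2-y_2x_1)/(x_1-x_2)$. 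This produces the identities~(\ref{3.23.1}), (\ref{3.23.3}), (\ref{3.23.4}) directly. The three remaining identities (\ref{3.23.2}), (\ref{3.23.5}), (\ref{3.23.6}) then follow by inserting these four values into the universal relations (\ref{3.23.111}), (\ref{3.23.112}), (\ref{3.23.113}) and simplifying.

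The identities have been established on an open subset of $\operatorname{Jac}(E_1)$; since both sides of each identity are meromorphic functions of $v\in\mathbb{C}$, they extend to all of $\mathbb{C}$ by analytic continuation, in the same way the proof of Theorem~\ref{3.6.2} extended its identity from $U'$ to all of $\operatorname{Jac}(V)$. The main obstacle is not conceptual but computational: the formulas of Lemma~\ref{3.8.111} carry the branch $\sqrt{4\alpha\beta X+(\alpha-\beta)^2}$, and one must check that this ambiguity cancels in every symmetric function entering the Jacobi inversion, so that the resulting expressions are intrinsic functions of $\wp_{E_1}(v)$ and $\wp_{E_1}'(v)$. This bookkeeping, followed by the substitution into~(\ref{3.23.111})--(\ref{3.23.113}), is where the whole arithmetic content of the six formulas is produced.
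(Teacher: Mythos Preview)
Your proposal is correct and follows essentially the same route as the paper: identify $\pi({\bf k}_1 v)=I(S_1,S_2)$ via Proposition~\ref{2.15.1} and the vanishing of $\int_\infty^{O_1}{\boldsymbol\omega}+\int_\infty^{\chi(O_1)}{\boldsymbol\omega}$, apply Jacobi inversion together with Lemma~\ref{3.8.111} and the parametrisation $X=\wp_{E_1}(v)$, $Y=-\wp_{E_1}'(v)/2$ to obtain (\ref{3.23.1}), (\ref{3.23.3}), (\ref{3.23.4}), and then derive (\ref{3.23.2}), (\ref{3.23.5}), (\ref{3.23.6}) from the relations (\ref{3.23.111})--(\ref{3.23.113}) before extending by analytic continuation. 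The only cosmetic difference is that the paper restricts explicitly to the open set $E_1'$ from Lemma~\ref{3.10.1} to ensure $x_1\neq x_2$, where your argument speaks of a ``generic divisor''; the content is the same.
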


\begin{proof}We consider the set $E_1'$ defined in the proof of Lemma \ref{3.10.1} and $U''=J_1(E_1')$. Then $U''$ is an open set in $\operatorname{Jac}(E_1)$. We~take a point $S=(X,Y)\in E_1'$ and set $\overline{v}=J_1(S)$. We~have
\begin{gather*}
\varphi_1^*(S)=\int_{O_1}^{S_1}{\boldsymbol \omega}+\int_{\chi(O_1)}^{S_2}{\boldsymbol \omega}=\int_{\infty}^{S_1}{\boldsymbol \omega}+\int_{\infty}^{S_2}{\boldsymbol \omega},
\end{gather*}
where $\varphi_1^{-1}(S)=\{S_1,S_2\}$.
From Proposition \ref{2.15.1}, we have $\varphi_1^*(S)=\pi({\bf k}_1v)$,
where $v\in\mathbb{C}$ such that $\pi_1(v)=\overline{v}$.
From Lemma \ref{3.8.111} and $S\neq(0,0)$, we have $S_1, S_2\neq\infty$.
Let $S_i=(x_i,y_i)$ for $i=1,2$.
From Lemma \ref{3.8.111} and $S\in E_1'$, we have $x_1\neq x_2$, i.e., $S_2\neq\chi(S_1)$.
Therefore, from the well-known solution of the Jacobi inversion problem, we have
\begin{gather*}
\wp_{1,1}({\bf k}_1v)=x_1+x_2,\qquad \wp_{1,3}({\bf k}_1v)=-x_1x_2,
\\
2y_i=-\wp_{1,1,1}({\bf k}_1v)x_i-\wp_{1,1,3}({\bf k}_1v),\qquad i=1,2.
\end{gather*}
Thus we have
\begin{gather*}
\wp_{1,1,1}({\bf k}_1v)=-2\frac{y_1-y_2}{x_1-x_2},\qquad \wp_{1,1,3}({\bf k}_1v)=2\frac{x_2y_1-x_1y_2}{x_1-x_2}.
\end{gather*}
We have $X=\wp_{E_1}(v)$ and $Y=-\wp_{E_1}'(v)/2$.
Therefore, from Lemma \ref{3.8.111} and the direct calculations, we find that (\ref{3.23.1}), (\ref{3.23.3}), and (\ref{3.23.4}) hold for $v\in\pi_1^{-1}(U'')$.
Since $\pi_1^{-1}(U'')$ is an open set in $\mathbb{C}$, (\ref{3.23.1}), (\ref{3.23.3}), and (\ref{3.23.4}) hold on $\mathbb{C}$.
From (\ref{3.23.111}), (\ref{3.23.112}), (\ref{3.23.113}), and the direct calculations, we obtain (\ref{3.23.2}), (\ref{3.23.5}), and (\ref{3.23.6}).
\end{proof}

\begin{Corollary}\label{3.12.1111}
For $v\in\mathbb{C}$, the following relations hold:
\begin{gather}
\wp_{1,1}({\bf k}_2v)=-2\alpha\beta+\frac{(\alpha+\beta)^2}{\wp_{E_2}(v)},\qquad
\wp_{1,3}({\bf k}_2v)=-\alpha^2\beta^2,
\label{3.24.1}
\\
\wp_{3,3}({\bf k}_2v)=\alpha^2\beta^2\big\{(\alpha\beta+1)^2\wp_{E_2}(v)-2\alpha\beta\big\},
\label{3.24.2}
\\
\wp_{1,1,1}({\bf k}_2v)=-\frac{(\alpha\beta+1)\wp_{E_2}'(v)\big\{(\alpha+\beta)^2-\alpha\beta\wp_{E_2}(v)\big\}} {\wp_{E_2}(v)^2},
\label{3.24.3}
\\
\wp_{1,1,3}({\bf k}_2v)=\frac{\alpha^2\beta^2(\alpha\beta+1)\wp_{E_2}'(v)}{\wp_{E_2}(v)},
\label{3.24.4}
\\
\wp_{1,3,3}({\bf k}_2v)=\frac{\alpha^3\beta^3(\alpha\beta+1)\wp_{E_2}'(v)}{\wp_{E_2}(v)},
\label{3.24.5}
\\
\wp_{3,3,3}({\bf k}_2v)=-\frac{\alpha^3\beta^3(\alpha\beta+1) \big\{(\alpha\beta+1)^2\wp_{E_2}(v)-\alpha\beta\big\}\wp_{E_2}'(v)}{\wp_{E_2}(v)}.
\label{3.24.6}
\end{gather}
\end{Corollary}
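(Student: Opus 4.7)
The plan is to deduce Corollary \ref{3.12.1111} from Theorem \ref{2.17.1} by the same $\beta \mapsto -\beta$ substitution argument that was used to derive Corollary \ref{3.12.111} from Theorem \ref{3.6.2}, and to derive the formulas for $a_2,b_2,c_2,d_2$ from those for $a_1,b_1,c_1,d_1$ in Propositions \ref{3.6.1} and \ref{2.15.1}.

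First I would observe that the defining polynomial of $V$ in (\ref{4.7.1}) depends only on $\alpha^2$, $\beta^2$, and $\alpha^2\beta^2$, so the coefficients $\lambda_2,\lambda_4,\lambda_6,\lambda_8$ are invariant under $\beta \mapsto -\beta$. Since the sigma function $\sigma({\bf u})$ associated with $V$ is determined by these $\lambda_{2i}$ (by the normalization fixed via the expansion (\ref{4.27.1})), the functions $\wp_{j,k}$ and $\wp_{j,k,\ell}$ are also invariant under $\beta\mapsto-\beta$. On the other hand, comparing (\ref{4.6.1a}) with (\ref{4.6.2a}), and (\ref{3.16.11}) with (\ref{3.16.222}), the elliptic curve $E_2$, the morphism $\varphi_2$, and the point $O_2$ are obtained from $E_1$, $\varphi_1$, and $O_1$ by replacing $\beta$ with $-\beta$ throughout. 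Consequently $\wp_{E_2}$ and $\wp_{E_2}'$ are obtained from $\wp_{E_1}$ and $\wp_{E_1}'$ by the same substitution. Finally, from the explicit formulas (\ref{3.9.2}) for $(c_1,d_1)$ and $(c_2,d_2)$, we see that ${\bf k}_2$ is obtained from ${\bf k}_1$ by replacing $\beta$ with $-\beta$.

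Putting these observations together, each identity in Theorem \ref{2.17.1} transforms under $\beta\mapsto-\beta$ into an identity between meromorphic functions on $\mathbb{C}$, whose left-hand side is $\wp_{j,k}({\bf k}_2v)$ or $\wp_{j,k,\ell}({\bf k}_2v)$ (since the hyperelliptic functions are invariant), and whose right-hand side is the expression obtained from Theorem \ref{2.17.1} by replacing $\beta$ with $-\beta$ and $\wp_{E_1}$ with $\wp_{E_2}$. A direct check then shows that these transformed right-hand sides are exactly the right-hand sides of (\ref{3.24.1})--(\ref{3.24.6}); for instance, $2\alpha\beta+(\alpha-\beta)^2/\wp_{E_1}(v)$ becomes $-2\alpha\beta+(\alpha+\beta)^2/\wp_{E_2}(v)$, and $(\alpha\beta-1)^2$ becomes $(\alpha\beta+1)^2$, matching (\ref{3.24.1}) and the coefficient in (\ref{3.24.2}).

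If one preferred a self-contained derivation not invoking the symmetry, the alternative would be to repeat the argument used for Theorem \ref{2.17.1}: use Lemma \ref{5.14.1} (which plays for $\varphi_2$ the role Lemma \ref{3.8.111} plays for $\varphi_1$) to write down the two preimages of a generic $T=(X,Y)\in E_2$ under $\varphi_2$, apply Proposition \ref{2.15.1} to identify $\varphi_2^*(T) = \pi({\bf k}_2 v)$ for $v$ with $\pi_2(v)=J_2(T)$, invoke the Jacobi inversion theorem to express $\wp_{1,1}({\bf k}_2v)$, $\wp_{1,3}({\bf k}_2v)$, $\wp_{1,1,1}({\bf k}_2v)$, $\wp_{1,1,3}({\bf k}_2v)$ as elementary symmetric functions of the preimage coordinates, substitute $X=\wp_{E_2}(v)$, $Y=-\wp_{E_2}'(v)/2$, and finally derive (\ref{3.24.2}), (\ref{3.24.5}), (\ref{3.24.6}) from the $\lambda$-relations (\ref{3.23.111})--(\ref{3.23.113}). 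The main (but routine) obstacle in either approach is the algebraic bookkeeping in applying (\ref{3.23.111})--(\ref{3.23.113}); the symmetry argument avoids repeating this work by appealing to the invariance of $V$ under $\beta\mapsto-\beta$.
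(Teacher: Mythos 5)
Your proposal is correct and follows essentially the same route as the paper, which likewise obtains (\ref{3.24.1})--(\ref{3.24.6}) from (\ref{3.23.1})--(\ref{3.23.6}) by the substitution $\beta\mapsto-\beta$, invoking the invariance of $V$ (hence of $\wp_{j,k}$, $\wp_{j,k,\ell}$) and the fact that $E_2$, $\varphi_2$, ${\bf k}_2$ arise from $E_1$, $\varphi_1$, ${\bf k}_1$ under this change, exactly as in the proof of Proposition~\ref{3.6.1}. Your write-up simply makes these invariance checks more explicit than the paper's one-line argument.
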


\begin{proof}
In the same way as the proof of Proposition \ref{3.6.1}, we can obtain the expressions (\ref{3.24.1})--(\ref{3.24.6}) by changing $\beta$ with $-\beta$ in the expressions (\ref{3.23.1})--(\ref{3.23.6}), respectively.
\end{proof}

\begin{Theorem}\label{3.25.1}For ${\bf v}={}^t(v_1,v_2)\in\mathbb{C}^2$, we have
\begin{gather*}
\wp_{1,1}(K{\bf v})=-\frac{p_2 \big\{(\alpha\beta-1)^2(\alpha+\beta)^2\wp_{E_1}^2 +(\alpha\beta+1)^2(\alpha-\beta)^2\wp_{E_2}^2+p_3\big\}}{2p_1^2},
\\
\wp_{1,3}(K{\bf v})=\alpha^2\beta^2+\frac{\alpha^2\beta^2 p_2 \big\{\big(\alpha^2\beta^2-1\big)\wp_{E_1}'\wp_{E_2}'-p_2\big\}}{2p_1^2},
\\
\wp_{3,3}(K{\bf v})=-\frac{\alpha^2\beta^2 p_2 \big\{\big(\alpha^2\beta^2-1\big)^2\wp_{E_1}^2\wp_{E_2}^2+\big(\alpha^2-\beta^2\big)^2+p_3\big\}}{2p_1^2},
\end{gather*}
where
\begin{gather*}
p_1=\big\{(\alpha\beta-1)^2\wp_{E_1}-(\alpha\beta+1)^2\wp_{E_2}+8\alpha\beta\big\} \wp_{E_1}\wp_{E_2}-(\alpha+\beta)^2\wp_{E_1}+(\alpha-\beta)^2\wp_{E_2},
\\
p_2=\big(\alpha^2\beta^2-1\big)\wp_{E_1}'\wp_{E_2}'-2\big\{(\alpha\beta-1)^2 \wp_{E_1}+(\alpha\beta+1)^2\wp_{E_2}-2\big(\alpha^2+1\big)\big(\beta^2+1\big)\big\}\wp_{E_1}\wp_{E_2}
\\ \hphantom{p_2=}
{} -2\big\{(\alpha+\beta)^2\wp_{E_1}+(\alpha-\beta)^2\wp_{E_2}\big\},
\\
p_3=-2\alpha\beta p_1-2\big(\alpha^2\beta^4+\alpha^4\beta^2-4\alpha^2\beta^2+\alpha^2+\beta^2\big) \wp_{E_1}\wp_{E_2}.
\end{gather*}
Here for simplicity we denote $\wp_{E_1}(v_1)$ and $\wp_{E_2}(v_2)$ by $\wp_{E_1}$ and $\wp_{E_2}$.
\end{Theorem}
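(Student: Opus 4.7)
The plan is to reduce the identity to a special case of the addition theorem for the genus-2 hyperelliptic functions $\wp_{j,k}$. By the definition of $K = ({\bf k}_1, {\bf k}_2)$, one has $K{\bf v} = {\bf k}_1 v_1 + {\bf k}_2 v_2$ for ${\bf v} = {}^t(v_1, v_2)$, so for $(j,k) \in \{(1,1),(1,3),(3,3)\}$ the quantity to compute is
\[
\wp_{j,k}(K{\bf v}) = \wp_{j,k}\bigl({\bf k}_1 v_1 + {\bf k}_2 v_2\bigr).
\]
The addition formulae cited in the paper as \cite[Theorem 3.3]{G} express $\wp_{j,k}({\bf u}+{\bf w})$ as an explicit rational expression in the values $\wp_{i,j}({\bf u})$, $\wp_{i,j,\ell}({\bf u})$, $\wp_{i,j}({\bf w})$, $\wp_{i,j,\ell}({\bf w})$, and I would apply these with ${\bf u} = {\bf k}_1 v_1$, ${\bf w} = {\bf k}_2 v_2$.

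The values at these two arguments are precisely those furnished by Theorem~\ref{2.17.1} and Corollary~\ref{3.12.1111}: they are rational in $\wp_{E_1}(v_1), \wp_{E_1}'(v_1)$ and $\wp_{E_2}(v_2), \wp_{E_2}'(v_2)$, respectively. A particularly useful feature is the identity
\[
\wp_{1,3}({\bf k}_1 v_1) = \wp_{1,3}({\bf k}_2 v_2) = -\alpha^2 \beta^2,
\]
which is a $v_i$-independent constant and should produce substantial cancellation inside the addition formula. Direct substitution then yields a rational expression for $\wp_{j,k}(K{\bf v})$ in the four variables $\wp_{E_1}, \wp_{E_1}', \wp_{E_2}, \wp_{E_2}'$.

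To reach the stated form, in which only the product $\wp_{E_1}'(v_1)\wp_{E_2}'(v_2)$ appears linearly (inside $p_2$) and no higher powers of the derivatives occur, I would invoke the defining equations of $E_1$ and $E_2$, namely
\[
\wp_{E_i}'(v_i)^2 = 4\,\wp_{E_i}(v_i)\bigl(\wp_{E_i}(v_i) - 1\bigr)\bigl(\wp_{E_i}(v_i) - k_i^2\bigr),
\]
with $k_1 = (\alpha-\beta)/(\alpha\beta-1)$ and $k_2 = (\alpha+\beta)/(\alpha\beta+1)$, to eliminate all squared derivatives. The resulting expressions will then be rational functions of $\wp_{E_1}(v_1), \wp_{E_2}(v_2)$ and one instance of $\wp_{E_1}'(v_1)\wp_{E_2}'(v_2)$, which must be matched against the claimed compact form $(\text{numerator})/2p_1^2$.

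The main obstacle is not conceptual but computational: the raw expressions produced by the addition formula are large, and repackaging them into the stated form with the specific polynomials $p_1, p_2, p_3$ (in particular extracting the structure $p_3 = -2\alpha\beta\,p_1 - 2(\alpha^2\beta^4 + \alpha^4\beta^2 - 4\alpha^2\beta^2 + \alpha^2 + \beta^2)\wp_{E_1}\wp_{E_2}$) is delicate. I would carry out the simplification using a computer algebra system, and certify the identity by clearing denominators and verifying that the difference of the two sides vanishes modulo the two curve equations above. The identity holds on an open dense subset of $\mathbb{C}^2$ where all the intermediate objects (in particular the preimages $\varphi_i^{-1}$) are nonsingular, and extends to all of $\mathbb{C}^2$ by analytic continuation since both sides are meromorphic.
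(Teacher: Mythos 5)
Your proposal is correct and follows essentially the same route as the paper: write $K{\bf v}={\bf k}_1v_1+{\bf k}_2v_2$, apply the addition formulae of \cite[Theorem~3.3]{G} to the pair $({\bf k}_1v_1,{\bf k}_2v_2)$, and substitute the values of $\wp_{j,k}$ and $\wp_{j,k,\ell}$ at these arguments from Theorem~\ref{2.17.1} and Corollary~\ref{3.12.1111}, simplifying directly (the elimination of $\wp_{E_i}'^2$ via the Legendre curve equations, which you make explicit, is implicit in the paper's ``direct calculations'').
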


\begin{proof}
In~\cite[Theorem~3.3]{G}, for ${\bf u}, {\bf v}\in\mathbb{C}^2$, the following functions are introduced:
\begin{gather*}
q({\bf u},{\bf v})=\wp_{3,3}({\bf u})-\wp_{3,3}({\bf v})+\wp_{1,3}({\bf u})\wp_{1,1}({\bf v}) -\wp_{1,3}({\bf v})\wp_{1,1}({\bf u}),
\\
q_1({\bf u},{\bf v})=\wp_{1,3,3}({\bf u})-\wp_{1,3,3}({\bf v})+\wp_{1,1,3}({\bf u})\wp_{1,1}({\bf v}) -\wp_{1,1,3}({\bf v})\wp_{1,1}({\bf u})+\wp_{1,1,1}({\bf v})\wp_{1,3}({\bf u})
\\
\hphantom{q_1({\bf u},{\bf v})=}
{}-\wp_{1,1,1}({\bf u})\wp_{1,3}({\bf v}),
\\
q_3({\bf u},{\bf v})=\wp_{3,3,3}({\bf u})-\wp_{3,3,3}({\bf v})+\wp_{1,3,3}({\bf u})\wp_{1,1}({\bf v}) -\wp_{1,3,3}({\bf v})\wp_{1,1}({\bf u})+\wp_{1,1,3}({\bf v})\wp_{1,3}({\bf u})
\\ \hphantom{q_3({\bf u},{\bf v})=}
{}-\wp_{1,1,3}({\bf u})\wp_{1,3}({\bf v}),
\\
q_{1,1}({\bf u},{\bf v})=8\lambda_2(\wp_{1,3}({\bf u})\wp_{1,1}({\bf v})-\wp_{1,3}({\bf v})\wp_{1,1}({\bf u}))+4\lambda_4(\wp_{1,3}({\bf u})-\wp_{1,3}({\bf v}))
\\ \hphantom{q_{1,1}({\bf u},{\bf v})=}
{}-4(\wp({\bf u})-\wp({\bf v}))-8(\wp_{3,3}({\bf u})\wp_{1,1}({\bf v})
-\wp_{3,3}({\bf v})\wp_{1,1}({\bf u}))+2(\wp_{1,1,3}({\bf u})\wp_{1,1,1}({\bf v})
\\ \hphantom{q_{1,1}({\bf u},{\bf v})=}
{}-\wp_{1,1,3}({\bf v})\wp_{1,1,1}({\bf u})),
\\
q_{1,3}({\bf u},{\bf v})=4\lambda_6(\wp_{1,3}({\bf u})-\wp_{1,3}({\bf v}))+2\lambda_4(\wp_{1,3}({\bf u})\wp_{1,1}({\bf v})-\wp_{1,3}({\bf v})\wp_{1,1}({\bf u}))
\\ \hphantom{q_{1,3}({\bf u},{\bf v})=}
{}-4(\wp_{3,3}({\bf u})\wp_{1,3}({\bf v})-\wp_{3,3}({\bf v})\wp_{1,3}({\bf u}))+2(\wp({\bf u})\wp_{1,1}({\bf v})-\wp({\bf v})\wp_{1,1}({\bf u}))
\\ \hphantom{q_{1,3}({\bf u},{\bf v})=}
{}-2\lambda_8(\wp_{1,1}({\bf u})-\wp_{1,1}({\bf v}))+\wp_{1,1,1}({\bf v})\wp_{1,3,3}({\bf u})-\wp_{1,1,1}({\bf u})\wp_{1,3,3}({\bf v}),
\\
q_{3,3}({\bf u},{\bf v})=4\lambda_6q({\bf u},{\bf v})+4\lambda_8(\wp_{1,3}({\bf u})-\wp_{1,3}({\bf v}))+4(\wp({\bf u})\wp_{1,3}({\bf v})-\wp({\bf v})\wp_{1,3}({\bf u}))
\\ \hphantom{q_{3,3}({\bf u},{\bf v})=}
{}+2(\wp_{1,3,3}({\bf u})\wp_{1,1,3}({\bf v})-\wp_{1,3,3}({\bf v})\wp_{1,1,3}({\bf u})),
\end{gather*}
where we set $\wp=\wp_{1,1}\wp_{3,3}-\wp_{1,3}^2$.
From $K{\bf v}={\bf k}_1v_1+{\bf k}_2v_2$ and the addition formulae for $\wp_{j,k}$, which are given in \cite[Theorem 3.3]{G}, we obtain
\begin{gather*}
\wp_{1,1}(K{\bf v})=-\wp_{1,1}({\bf k}_1v_1)-\wp_{1,1}({\bf k}_2v_2)+\frac{1}{4}\bigg(\frac{q_1({\bf k}_1v_1, {\bf k}_2v_2)}{q({\bf k}_1v_1, {\bf k}_2v_2)}\bigg)^2-\frac{q_{1,1}({\bf k}_1v_1, {\bf k}_2v_2)}{4q({\bf k}_1v_1, {\bf k}_2v_2)},
\\
\wp_{1,3}(K{\bf v})=-\wp_{1,3}({\bf k}_1v_1)-\wp_{1,3}({\bf k}_2v_2)+\frac{q_1({\bf k}_1v_1, {\bf k}_2v_2)q_3({\bf k}_1v_1, {\bf k}_2v_2)}{4q({\bf k}_1v_1, {\bf k}_2v_2)^2}-\frac{q_{1,3}({\bf k}_1v_1, {\bf k}_2v_2)}{4q({\bf k}_1v_1, {\bf k}_2v_2)},
\\
\wp_{3,3}(K{\bf v})=-\wp_{3,3}({\bf k}_1v_1)-\wp_{3,3}({\bf k}_2v_2)+\frac{1}{4}\bigg(\frac{q_3({\bf k}_1v_1, {\bf k}_2v_2)}{q({\bf k}_1v_1, {\bf k}_2v_2)}\bigg)^2-\frac{q_{3,3}({\bf k}_1v_1, {\bf k}_2v_2)}{4q({\bf k}_1v_1, {\bf k}_2v_2)}.
\end{gather*}
From Theorem \ref{2.17.1} and Corollary \ref{3.12.1111}, we obtain the expressions of the theorem.
\end{proof}

{\sloppy\begin{Remark}\label{5.8.111}
From Theorem \ref{3.25.1}, we obtain the decompositions of the functions $\wp_{1,1}$, \mbox{$\wp_{1,3}-\alpha^2\beta^2$}, and $\wp_{3,3}$ into the products of the meromorphic functions.
\end{Remark}

}

\begin{Remark}\label{3.28}
For ${\bf u}={}^t(u_1,u_3)\in\mathbb{C}^2$, we set $F({\bf u})=2\wp_{1,1}({\bf u})+\frac{2}{3}\lambda_2$ and $G({\bf u})=2\wp_{1,3}({\bf u})$.
It~is known that the function $F$ is a solution of the KdV-hierarchy (see \cite[Theorem~2.25]{BE} and \cite[Theorem~3.6]{BEL-2012})
\begin{gather*}
\frac{\partial F}{\partial u_3}=\frac{1}{4}\frac{\partial^3F}{\partial u_1^3}-\frac{3}{2}F\frac{\partial F}{\partial u_1},
\\
0=\frac{1}{4}\frac{\partial^3F}{\partial u_1^2\partial u_3} -\bigg(F+\frac{\lambda_2}{3}\bigg)\frac{\partial F}{\partial u_3}
-\frac{1}{2}\frac{\partial F}{\partial u_1}G,\qquad
\frac{\partial F}{\partial u_3}=\frac{\partial G}{\partial u_1}.
\end{gather*}
For ${\bf v}={}^t(v_1,v_2)\in\mathbb{C}^2$, we set $\widetilde{F}({\bf v})=\wp_{1,1}(K{\bf v})$ and $\widetilde{G}({\bf v})=\wp_{1,3}(K{\bf v})$.
From Theorem \ref{3.25.1}, we can express $\widetilde{F}({\bf v})$ and $\widetilde{G}({\bf v})$ in terms of $\wp_{E_1}(v_1)$ and $\wp_{E_2}(v_2)$. We~have $F({\bf u})=2\widetilde{F}\big(K^{-1}{\bf u}\big)+\frac{2}{3}\lambda_2$ and $G({\bf u})=2\widetilde{G}\big(K^{-1}{\bf u}\big)$.
Therefore, we can obtain a solution of the KdV-hierarchy in terms of the functions constructed by the Weierstrass elliptic functions $\wp_{E_1}$ and $\wp_{E_2}$.
\end{Remark}

\begin{Remark}\label{2021.11.22.2222}
It is well known that the functions $\wp_{1,1}$, $\wp_{1,3}$, and $\wp_{3,3}$ are coordinates of the Kummer surface (e.g., \cite{BEL, BEL-2012, Hudson-1990}).
From Theorem~\ref{3.25.1}, we obtain the expressions of the coordinates of the Kummer surface in terms of the Weierstrass elliptic functions $\wp_{E_1}$ and $\wp_{E_2}$.
\end{Remark}

\section{Comparison of our results and the results of \cite{BE, BEL-2012, Enolskii-Salerno-1996}}\label{5.8.222}

Let us consider the curve~$V$ defined by (\ref{4.7.1}).
Let $\widetilde{E}_1$ and $\widetilde{E}_2$ be the elliptic curves defined by
\begin{gather*}
\widetilde{E}_1=\big\{\big(\widetilde{X},\widetilde{Y}\big)\in\mathbb{C}^2\mid \widetilde{Y}^2=\widetilde{X}\big(\widetilde{X}-1\big)\big(\widetilde{X}-1/\kappa_1^2\big)\big\},
\\
\widetilde{E}_2=\big\{\big(\widetilde{X},\widetilde{Y}\big)\in\mathbb{C}^2\mid \widetilde{Y}^2=\widetilde{X}\big(\widetilde{X}-1\big)\big(\widetilde{X}-1/\kappa_2^2\big)\big\}.
\end{gather*}
For $i=1,2,$ let $\{\widetilde{\mathfrak{a}}^{(i)},\widetilde{\mathfrak{b}}^{(i)}\}$ be a canonical basis in the one-dimensional homology group of the curve~$\widetilde{E}_i$.
For $i=1,2$, we define the periods
\begin{gather*}
2\widetilde{\Omega}_i'=\int_{\widetilde{\mathfrak{a}}^{(i)}}-\frac{{\rm d}\widetilde{X}}{2\widetilde{Y}},
\qquad 2\widetilde{\Omega}_i''=\int_{\widetilde{\mathfrak{b}}^{(i)}}-\frac{{\rm d}\widetilde{X}}{2\widetilde{Y}}.
\end{gather*}
We define the period lattice $\widetilde{\Lambda}_i=\big\{2\widetilde{\Omega}_i'm_1+2\widetilde{\Omega}_i''m_2\mid m_1,m_2\in\mathbb{Z}\big\}$ and consider the Jacobian variety $\operatorname{Jac}\big(\widetilde{E}_i\big)=\mathbb{C}/\widetilde{\Lambda}_i$.
For $i=1,2$, we take the canonical basis $\big\{\widetilde{\mathfrak{a}}^{(i)},\widetilde{\mathfrak{b}}^{(i)}\big\}$ in the one-dimensional homology group of the curve $\widetilde{E}_i$ such that the following relations hold in $\operatorname{Jac}\big(\widetilde{E}_i\big)$
\begin{gather*}
\int_{\infty}^{(1/\kappa_i^2,0)}\bigg({-}\frac{{\rm d}\widetilde{X}}{2\widetilde{Y}}\bigg) =\widetilde{\Omega}_i',\qquad
\int_{\infty}^{(0,0)}\bigg({-}\frac{{\rm d}\widetilde{X}}{2\widetilde{Y}}\bigg) =\widetilde{\Omega}_i'',\qquad
\int_{\infty}^{(1,0)}\bigg({-}\frac{{\rm d}\widetilde{X}}{2\widetilde{Y}}\bigg) =\widetilde{\Omega}_i'+\widetilde{\Omega}_i''.
\end{gather*}
For $i=1,2$, let $\widetilde{\tau}_i=\big(\widetilde{\Omega}_i'\big)^{-1}\widetilde{\Omega}_i''$. We~consider the Jacobi elliptic functions $\mbox{sn}(u,\widetilde{\tau}_i)$, $\mbox{cn}(u,\widetilde{\tau}_i)$, and $\mbox{dn}(u,\widetilde{\tau}_i)$ associated with $\widetilde{\tau}_i$, where the definitions of the Jacobi elliptic functions are written in~Section~\ref{4.27.11}.
For $i=1,2$, let $m_i$ be the modulus of the Jacobi elliptic functions associated with~$\widetilde{\tau}_i$. Then we have $m_i^2=\kappa_i^2$ (cf.~\cite{H}).
In~\cite{BE, BEL-2012, Enolskii-Salerno-1996}, the Jacobi elliptic functions with modulus $\kappa_i$ are considered.
Since the Jacobi elliptic functions are determined by the second power of the modulus, the Jacobi elliptic functions $\mbox{sn}(u,\widetilde{\tau}_i), \mbox{cn}(u,\widetilde{\tau}_i)$, and $\mbox{dn}(u,\widetilde{\tau}_i)$
are equal to the ones considered in \cite{BE, BEL-2012, Enolskii-Salerno-1996}.
For ${\bf u}={}^t(u_1,u_3)\in\mathbb{C}^2$, let
\begin{gather*}
w_1=\sqrt{\big(1-\alpha^2\big)\big(1-\beta^2\big)}(u_1+\alpha\beta u_3),\qquad w_2=\sqrt{\big(1-\alpha^2\big)\big(1-\beta^2\big)}(u_1-\alpha\beta u_3)
\end{gather*}
and
\begin{gather*}
Z_1=\mbox{sn}(w_1,\widetilde{\tau}_1)\mbox{sn}(w_2,\widetilde{\tau}_2),\qquad Z_2=\mbox{cn}(w_1,\widetilde{\tau}_1)\mbox{cn}(w_2,\widetilde{\tau}_2),\qquad Z_3=\mbox{dn}(w_1,\widetilde{\tau}_1)\mbox{dn}(w_2,\widetilde{\tau}_2).
\end{gather*}
Let $\wp_{j,k}$ be the hyperelliptic functions associated with the curve~$V$ defined by (\ref{4.7.1}).

\begin{Theorem}[{\cite[p.~3448]{BE}, \cite[p.~99]{BEL-2012}, \cite{Enolskii-Salerno-1996}}]\label{2021.11.20.AAA}
For ${\bf u}={}^t(u_1,u_3)\in\mathbb{C}^2$, the following relations hold:
\begin{gather}
Z_1=-\frac{\big(1-\alpha^2\big)\big(1-\beta^2\big)\big(\alpha^2\beta^2+\wp_{1,3}({\bf u})\big)}{\big(\alpha^2+\beta^2\big)\big(\wp_{1,3}({\bf u})-\alpha^2\beta^2\big)+\alpha^2\beta^2\wp_{1,1}({\bf u})+\wp_{3,3}({\bf u})},\label{4.29.111}\\
Z_2=-\frac{\big(1+\alpha^2\beta^2\big)\big(\alpha^2\beta^2-\wp_{1,3}({\bf u})\big)-\alpha^2\beta^2\wp_{1,1}({\bf u})-\wp_{3,3}({\bf u})}{\big(\alpha^2+\beta^2\big)\big(\wp_{1,3}({\bf u})-\alpha^2\beta^2\big)+\alpha^2\beta^2\wp_{1,1}({\bf u})+\wp_{3,3}({\bf u})},\label{4.29.112}\\
Z_3=-\frac{\alpha^2\beta^2\wp_{1,1}({\bf u})-\wp_{3,3}({\bf u})}{\big(\alpha^2+\beta^2\big)\big(\wp_{1,3}({\bf u})-\alpha^2\beta^2\big)+\alpha^2\beta^2\wp_{1,1}({\bf u})+\wp_{3,3}({\bf u})}. \label{4.29.113}
\end{gather}
\end{Theorem}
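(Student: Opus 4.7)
My plan is to verify the three identities by pulling both sides back along the linear change of variables ${\bf u}=K{\bf v}$ of Proposition~\ref{2.15.1}, so that each side becomes a rational expression in $\wp_{E_1}(v_1)$ and $\wp_{E_2}(v_2)$, and then to check that the two rational expressions coincide.

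The first step decouples the arguments. Writing $u_1=c_1v_1+c_2v_2$ and $u_3=d_1v_1+d_2v_2$ with the constants of~\eqref{3.9.2}, one computes
\[
u_1+\alpha\beta u_3=\frac{2v_1}{1-\alpha\beta},\qquad u_1-\alpha\beta u_3=\frac{2v_2}{1+\alpha\beta}.
\]
Hence $w_1=\frac{2\sqrt{(1-\alpha^2)(1-\beta^2)}}{1-\alpha\beta}\,v_1$ depends only on $v_1$ and $w_2=\frac{2\sqrt{(1-\alpha^2)(1-\beta^2)}}{1+\alpha\beta}\,v_2$ depends only on $v_2$, so each of $Z_1,Z_2,Z_3$ splits as a product of a function of $v_1$ and a function of $v_2$.

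Next I would convert each Jacobi factor to a function of $\wp_{E_i}(v_i)$. Through the explicit isomorphisms $\xi_i,\xi_\pm,\overline{\xi}_i$ recorded in Section~\ref{2021.11.22.1111}, the Legendre-form curve $E_i$ is identified with the Jacobi-form elliptic curve whose uniformization is $u\mapsto(\operatorname{sn}(u,\widetilde{\tau}_i),\operatorname{sn}'(u,\widetilde{\tau}_i))$. The al-function identities of Section~\ref{2021.11.20.1111}, together with $\operatorname{sn}^2+\operatorname{cn}^2=1$ and $\operatorname{dn}^2+\kappa_i^2\operatorname{sn}^2=1$, then express $\operatorname{sn}^2(w_i,\widetilde{\tau}_i)$, $\operatorname{cn}^2(w_i,\widetilde{\tau}_i)$, $\operatorname{dn}^2(w_i,\widetilde{\tau}_i)$ as rational functions of $\wp_{E_i}(v_i)$ once the rescaling of Step~1 has been absorbed into the arguments. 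Squaring both sides of the claimed identities~\eqref{4.29.111}--\eqref{4.29.113} eliminates the remaining sign ambiguity in the square roots on the left and yields purely rational identities in the two independent variables $\wp_{E_1}(v_1),\wp_{E_2}(v_2)$.

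Theorem~\ref{3.25.1} does the corresponding job for the right-hand sides: it rewrites each of $\wp_{1,1}(K{\bf v})$, $\wp_{1,3}(K{\bf v})$, $\wp_{3,3}(K{\bf v})$ as a rational function of $\wp_{E_1}(v_1)$ and $\wp_{E_2}(v_2)$ (the derivatives $\wp_{E_i}'$ enter only through $(\wp_{E_i}')^2$, which is a polynomial in $\wp_{E_i}$). Substituting these into the numerators and denominators of \eqref{4.29.111}--\eqref{4.29.113} turns the right-hand sides into rational functions of the same two variables. The identities are then closed by a direct algebraic check in $\mathbb{C}(\wp_{E_1},\wp_{E_2})$, followed by evaluating both sides at a single point where they are finite and nonzero to pin down the overall sign of each identity, which is lost when squaring.

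The main obstacle is the sign and branch bookkeeping in the middle step. The canonical bases $\{\widetilde{\mathfrak{a}}^{(i)},\widetilde{\mathfrak{b}}^{(i)}\}$ have been fixed so that the half-period values of $\wp_{\widetilde{E}_i}$ match the poles and zeros of $\operatorname{sn},\operatorname{cn},\operatorname{dn}$ at $\widetilde{\tau}_i$, and one has to track this choice consistently through the chain of isomorphisms $E_i\to\widetilde{E}_i$ and through the fixed branch of $\sqrt{(1-\alpha^2)(1-\beta^2)}$ appearing in $w_1,w_2$. A misidentification would only flip the sign of one of the three identities, and a single numerical evaluation dispenses of it, but keeping all of these conventions mutually compatible so that the algebraic check closes cleanly is the delicate part of the argument.
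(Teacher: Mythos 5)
The first thing to say is that the paper does not prove Theorem~\ref{2021.11.20.AAA} at all: it is quoted from \cite{BE}, \cite{BEL-2012}, \cite{Enolskii-Salerno-1996}, and in Section~\ref{5.8.222} the logic runs in the opposite direction to yours --- the authors square \eqref{4.29.111}--\eqref{4.29.113}, substitute \eqref{4.29.114}--\eqref{4.29.116} and Proposition~\ref{4.29.117}, and check that what comes out agrees with their independently proved formulas \eqref{4.13.112}, \eqref{4.13.333} and (after specializing ${\bf u}={\bf k}_1v$ and using the duplication formulae \eqref{4.29.333}, \eqref{4.29.444}) with \eqref{3.23.1}, \eqref{3.23.2}. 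Your plan inverts this: deduce the classical identities from Theorem~\ref{3.25.1}. That is legitimate and non-circular, since Theorem~\ref{3.25.1} rests only on Theorem~\ref{2.17.1}, Corollary~\ref{3.12.1111} and the addition formulae of \cite{G}; your decoupling computation $u_1+\alpha\beta u_3=2v_1/(1-\alpha\beta)$, $u_1-\alpha\beta u_3=2v_2/(1+\alpha\beta)$ is correct, and the ``square, then fix the sign at one point'' argument is sound because the meromorphic functions on $\mathbb{C}^2$ form an integral domain, so $F^2=G^2$ forces $F=\pm G$ globally.

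Two concrete points in the middle of your plan are, however, not right as stated, and the proof is not actually carried out. First, the parenthetical claim that in Theorem~\ref{3.25.1} the derivatives enter only through $(\wp_{E_i}')^2$ is false: the product $\wp_{E_1}'\wp_{E_2}'$ occurs to the first power in $p_2$, hence in all three of $\wp_{1,1}(K{\bf v})$, $\wp_{1,3}(K{\bf v})$, $\wp_{3,3}(K{\bf v})$. After substitution the right-hand sides of \eqref{4.29.111}--\eqref{4.29.113} therefore live in the quadratic extension of $\mathbb{C}(\wp_{E_1},\wp_{E_2})$ generated by $W=\wp_{E_1}'\wp_{E_2}'$ (with $W^2$ a polynomial in $\wp_{E_1},\wp_{E_2}$), and your ``direct algebraic check in $\mathbb{C}(\wp_{E_1},\wp_{E_2})$'' has to be performed in that extension, separating the $W$-free and $W$-linear parts. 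Second, the rescaling of the arguments is not merely ``absorbed'': with ${\bf u}=K{\bf v}$ one has $w_1=2h_1$, $w_2=2h_2$, and \eqref{4.29.114}--\eqref{4.29.116} together with Proposition~\ref{4.29.117} express the squared Jacobi factors through $\wp_{E_1}(2v_1)$ and $\wp_{E_2}(2v_2)$, whereas Theorem~\ref{3.25.1} is written in $\wp_{E_i}(v_i)$, $\wp_{E_i}'(v_i)$; a duplication step (for $\wp_{E_i}$, or for ${\rm cn}$, ${\rm dn}$ as in Section~\ref{5.8.222}) is an unavoidable extra ingredient before both sides lie over the same field. (Also, the relevant identification of $E_i$ with the curve carrying ${\rm sn}(\cdot,\widetilde{\tau}_i)$ is $\widetilde{\xi}_i\colon E_i\to\widetilde{E}_i$ of Section~\ref{5.8.222}, not the maps $\xi_\pm$, $\overline{\xi}_i$ of Section~\ref{2021.11.22.1111}.) A variant that avoids both $K$ and the doubling is to work directly with $f_1,f_2$ via Theorem~\ref{3.6.2} and Corollary~\ref{3.12.111} --- exactly the reverse of the paper's consistency check --- though there the verification is an identity modulo the relations among $\wp_{1,1},\wp_{1,3},\wp_{3,3}$, while your substitution has the advantage that $\wp_{E_1}(v_1)$ and $\wp_{E_2}(v_2)$ are algebraically independent. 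In any case, what you have is a viable programme; the rational-function identity that constitutes the theorem is never actually verified in your write-up.
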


\begin{Theorem}[{\cite[p.~3448]{BE}, \cite[p.~100]{BEL-2012}, \cite{Enolskii-Salerno-1996}}]\label{4.29.999}
For ${\bf u}={}^t(u_1,u_3)\in\mathbb{C}^2$, the following relations hold:
\begin{gather*}
\wp_{1,1}({\bf u})=\frac{\big(\alpha^2+\beta^2\big)(Z_2-Z_3)+\big(1+\alpha^2\beta^2\big)(Z_3-1)}{Z_1+Z_2-1},
\\
\wp_{1,3}({\bf u})=\alpha^2\beta^2\frac{1+Z_1-Z_2}{Z_1+Z_2-1},
\\
\wp_{3,3}({\bf u})=\alpha^2\beta^2\frac{\big(\alpha^2+\beta^2\big)(Z_2+Z_3)-\big(1+\alpha^2\beta^2\big)(Z_3+1)}{Z_1+Z_2-1}.
\end{gather*}
\end{Theorem}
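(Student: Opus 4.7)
The plan is to invert the three rational identities of Theorem~\ref{2021.11.20.AAA} by linear algebra, exploiting the fact that they all share a common denominator. Introduce
\begin{gather*}
D=\big(\alpha^2+\beta^2\big)\big(\wp_{1,3}({\bf u})-\alpha^2\beta^2\big)+\alpha^2\beta^2\wp_{1,1}({\bf u})+\wp_{3,3}({\bf u}),
\end{gather*}
so that clearing denominators in (\ref{4.29.111}), (\ref{4.29.112}), (\ref{4.29.113}) produces three linear relations
$Z_i D=L_i(\wp_{1,1},\wp_{1,3},\wp_{3,3})$ with explicit linear polynomials $L_1,L_2,L_3$ in the three hyperelliptic functions.

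The first (and only nontrivial) step is to compute the combination $(Z_1+Z_2-1)D=L_1+L_2-D$. On expanding $L_1=-(1-\alpha^2)(1-\beta^2)(\alpha^2\beta^2+\wp_{1,3})$ and $L_2=-(1+\alpha^2\beta^2)(\alpha^2\beta^2-\wp_{1,3})+\alpha^2\beta^2\wp_{1,1}+\wp_{3,3}$, the coefficients of $\wp_{1,1}$, $\wp_{1,3}$, and $\wp_{3,3}$ all cancel against the corresponding terms in $D$ (the cancellation for $\wp_{1,3}$ uses the identity $-(1-\alpha^2-\beta^2+\alpha^2\beta^2)+(1+\alpha^2\beta^2)=\alpha^2+\beta^2$). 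What remains is the constant
\begin{gather*}
(Z_1+Z_2-1)D=-2\alpha^2\beta^2\big(1-\alpha^2\big)\big(1-\beta^2\big),
\end{gather*}
giving an explicit expression for $D$ in terms of $Z_1+Z_2$ alone.

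With $D$ now known, the formula for $\wp_{1,3}$ follows at once by substitution into $Z_1D=L_1$, after which forming the sum and the difference of the relations $Z_2D=L_2$ and $Z_3D=L_3$ yields respectively $\wp_{3,3}$ and $\wp_{1,1}$, each expressed solely in terms of $Z_1,Z_2,Z_3$ and $\alpha,\beta$. After elementary regrouping (using the already-found expression for $\wp_{1,3}$ in the terms $(1+\alpha^2\beta^2)(\alpha^2\beta^2-\wp_{1,3})$ arising from $L_2$), the three expressions match exactly the ones in the statement. The main obstacle is purely computational: verifying the coefficient cancellation in the displayed identity for $(Z_1+Z_2-1)D$; everything else is mechanical rational-function arithmetic, and no analytic input beyond Theorem~\ref{2021.11.20.AAA} is required.
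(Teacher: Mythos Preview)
Your proposal is correct. The key computation $(Z_1+Z_2-1)D=-2\alpha^2\beta^2(1-\alpha^2)(1-\beta^2)$ checks out, and from there the extraction of $\wp_{1,3}$ from $Z_1D=L_1$ and of $\wp_{1,1},\wp_{3,3}$ from the sum and difference of $Z_2D=L_2$ and $Z_3D=L_3$ goes through exactly as you describe; the resulting numerators rearrange to the stated forms.

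As for comparison: the paper gives no proof of this theorem at all. Both Theorem~\ref{2021.11.20.AAA} and Theorem~\ref{4.29.999} are quoted from \cite{BE,BEL-2012,Enolskii-Salerno-1996} as a pair of mutually inverse formulae relating the coordinates $(Z_1,Z_2,Z_3)$ and $(\wp_{1,1},\wp_{1,3},\wp_{3,3})$ on the Kummer surface, and the paper uses them as known input in Section~\ref{5.8.222}. Your derivation of Theorem~\ref{4.29.999} by algebraically inverting Theorem~\ref{2021.11.20.AAA} is the natural way to pass from one to the other and supplies what the paper leaves to the references.
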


\begin{Remark}
In~\cite[p.~3448]{BE}, \cite[p.~366]{BE2}, \cite[pp.~79,~80]{BEL}, \cite[pp.~99,~175]{BEL-2012}, and \cite{Enolskii-Salerno-1996}, it is mentioned that $(Z_1,Z_2,Z_3)$ and $(\wp_{1,1},\wp_{1,3},\wp_{3,3})$ are coordinates on the Kummer surfaces.
In~\cite{BCMS2021}, algebraic correspondences between Kummer surfaces associated with the Jacobian variety of a~hyperelliptic curve of genus 2 admitting a morphism of degree~2 to an elliptic curve and
Kummer surfaces associated with the product of two non-isogenous elliptic curves are considered.
In \cite[equations~(2.56) and~(2.57)]{BCMS2021}, the formulae in Theorems~\ref{2021.11.20.AAA} and~\ref{4.29.999} in the present paper are described in terms of coordinates on the Kummer surfaces.
\end{Remark}

In this section, we give relationships between the above theorems (Theorems~\ref{2021.11.20.AAA} and~\ref{4.29.999}) and our results.
For $i=1,2$, let $\wp_{\widetilde{E}_i}$ be the Weierstrass elliptic function associated with $\widetilde{E}_i$.
Then, for $u\in\mathbb{C}$ and $i=1,2$, we have (cf.~\cite{H})
\begin{gather*}
\wp_{\widetilde{E}_i}(u)-\frac{1}{\kappa_i^2}=\frac{\mbox{cn}(u/\kappa_i, \widetilde{\tau}_i)^2}{\kappa_i^2\mbox{sn}(u/\kappa_i, \widetilde{\tau}_i)^2},
\\
\wp_{\widetilde{E}_i}(u)-1=\frac{\mbox{dn}(u/\kappa_i, \widetilde{\tau}_i)^2}{\kappa_i^2\mbox{sn}(u/\kappa_i, \widetilde{\tau}_i)^2},
\\
\wp_{\widetilde{E}_i}(u)=\frac{1}{\kappa_i^2\mbox{sn}(u/\kappa_i, \widetilde{\tau}_i)^2}.
\end{gather*}
From the above equations, for $u\in\mathbb{C}$ and $i=1,2$, we have
\begin{gather}
\mbox{sn}(u, \widetilde{\tau}_i)^2=\frac{1}{\kappa_i^2\wp_{\widetilde{E}_i}(\kappa_iu)},\label{4.29.114}
\\
\mbox{cn}(u, \widetilde{\tau}_i)^2 =\frac{\wp_{\widetilde{E}_i}(\kappa_iu)-1/\kappa_i^2}{\wp_{\widetilde{E}_i}(\kappa_iu)},
\label{4.29.115}
\\
\mbox{dn}(u, \widetilde{\tau}_i)^2 =\frac{\wp_{\widetilde{E}_i}(\kappa_iu)-1}{\wp_{\widetilde{E}_i}(\kappa_iu)}. \label{4.29.116}
\end{gather}

For $i=1,2$, our elliptic curves $E_i$ are isomorphic to the elliptic curves $\widetilde{E}_i$ by the following morphisms
\begin{gather}
\widetilde{\xi}_1\colon\quad E_1\to\widetilde{E}_1,\qquad (X,Y)\mapsto\big(\widetilde{X},\widetilde{Y}\big)=\bigg({-}\frac{(\alpha\beta-1)^2}{(\alpha-\beta)^2}X+1, \frac{\sqrt{-1}(\alpha\beta-1)^3Y}{(\alpha-\beta)^3}\bigg),\label{4.28.12345}\\
\widetilde{\xi}_2\colon\quad E_2\to\widetilde{E}_2,\qquad (X,Y)\mapsto\big(\widetilde{X},\widetilde{Y}\big)=\bigg({-}\frac{(\alpha\beta+1)^2}{(\alpha+\beta)^2}X+1, \frac{-\sqrt{-1}(\alpha\beta+1)^3Y}{(\alpha+\beta)^3}\bigg).\nonumber
\end{gather}

\begin{Proposition}\label{4.29.117}
For $u\in\mathbb{C}$, the following relations hold:
\begin{gather}
\wp_{E_1}\bigg({-}\sqrt{-1} \frac{\alpha\beta-1}{\alpha-\beta}u\bigg) =\frac{(\alpha-\beta)^2}{(\alpha\beta-1)^2}\big(1-\wp_{\widetilde{E}_1}(u)\big),\label{4.28.b}\\
\wp_{E_2}\bigg(\sqrt{-1} \frac{\alpha\beta+1}{\alpha+\beta}u\bigg) =\frac{(\alpha+\beta)^2}{(\alpha\beta+1)^2}\big(1-\wp_{\widetilde{E}_2}(u)\big). \label{4.28.c}
\end{gather}
\end{Proposition}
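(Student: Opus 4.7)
The plan is to mimic the strategy used earlier in the paper for the Jacobi and Legendre forms (Propositions preceding this one): exploit the isomorphism $\widetilde{\xi}_i\colon E_i\to\widetilde{E}_i$ to relate the Abel--Jacobi coordinates on the two Jacobians, and then read off the relation between $\wp_{E_i}$ and $\wp_{\widetilde{E}_i}$ from the explicit formula for $\widetilde{\xi}_i$ on the $X$-coordinate.

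First I would compute the pullback of the holomorphic one form $-{\rm d}\widetilde{X}/(2\widetilde{Y})$ on $\widetilde{E}_1$ under $\widetilde{\xi}_1$. Using the defining equations
\begin{gather*}
\widetilde{X}=-\frac{(\alpha\beta-1)^2}{(\alpha-\beta)^2}X+1,\qquad
\widetilde{Y}=\frac{\sqrt{-1}(\alpha\beta-1)^3Y}{(\alpha-\beta)^3},
\end{gather*}
a direct calculation shows
\begin{gather*}
\widetilde{\xi}_1^{*}\bigg({-}\frac{{\rm d}\widetilde{X}}{2\widetilde{Y}}\bigg) =\frac{\sqrt{-1}(\alpha-\beta)}{\alpha\beta-1}\bigg({-}\frac{{\rm d}X}{2Y}\bigg).
\end{gather*}
Let $J_1\colon E_1\to\operatorname{Jac}(E_1)$ be the Abel--Jacobi map defined by $P\mapsto\int_\infty^P(-{\rm d}X/(2Y))$ and let $\widetilde{J}_1\colon \widetilde{E}_1\to\operatorname{Jac}(\widetilde{E}_1)$ be the analogous map for $\widetilde{E}_1$. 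Since $\widetilde{\xi}_1$ sends $\infty$ to $\infty$, the pullback identity above implies that for $P\in E_1$ with $\widetilde{P}=\widetilde{\xi}_1(P)$,
\begin{gather*}
\widetilde{J}_1(\widetilde{P}) =\frac{\sqrt{-1}(\alpha-\beta)}{\alpha\beta-1}\, J_1(P),
\end{gather*}
so if we denote $u=\widetilde{J}_1(\widetilde{P})$ and $v=J_1(P)$ then $v=-\sqrt{-1}\frac{\alpha\beta-1}{\alpha-\beta}u$.

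Next I would use the well-known identities $\wp_{E_1}(v)=X$ and $\wp_{\widetilde{E}_1}(u)=\widetilde{X}$ (coming from the standard inversion of the elliptic integral). Solving the first component of $\widetilde{\xi}_1$ for $X$ in terms of $\widetilde{X}$ gives
\begin{gather*}
X=\frac{(\alpha-\beta)^2}{(\alpha\beta-1)^2}\bigl(1-\widetilde{X}\bigr),
\end{gather*}
and substituting the relations above yields exactly (\ref{4.28.b}) as an equality of meromorphic functions at the point $v=-\sqrt{-1}(\alpha\beta-1)u/(\alpha-\beta)$; since both sides are meromorphic functions of $u\in\mathbb{C}$ agreeing on a nonempty open set, the identity holds on all of $\mathbb{C}$. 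The proof of (\ref{4.28.c}) is the same argument with $\widetilde{\xi}_2$ in place of $\widetilde{\xi}_1$, and indeed it can be obtained formally by the $\beta\mapsto-\beta$ substitution used repeatedly earlier in the paper.

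There is no real obstacle here; the only delicate point is keeping track of the sign coming from $1/\sqrt{-1}=-\sqrt{-1}$ when inverting the pullback factor, which is exactly what produces the opposite signs of $\sqrt{-1}$ in the arguments of $\wp_{E_1}$ and $\wp_{E_2}$ in the two statements.
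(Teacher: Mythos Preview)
Your proposal is correct and follows essentially the same route as the paper: compute the pullback of $-{\rm d}\widetilde{X}/(2\widetilde{Y})$ under $\widetilde{\xi}_1$, use that $\widetilde{\xi}_1(\infty)=\infty$ to relate the Abel--Jacobi parameters, then read off the relation between $X$ and $\widetilde{X}$ from the defining formula for $\widetilde{\xi}_1$, extend by meromorphic continuation, and obtain the second identity by the $\beta\mapsto-\beta$ substitution. The only cosmetic difference is that the paper phrases the argument in terms of a chosen path $\widetilde{\gamma}$ and its image under $\widetilde{\xi}_1^{-1}$, whereas you package the same computation as a commutation of Abel--Jacobi maps.
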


\begin{proof}We take a point $u\in\mathbb{C}\backslash\widetilde{\Lambda}_1$.
Then, there exist a~point $\widetilde{S}=\big(\widetilde{X},\widetilde{Y}\big)\in \widetilde{E}_1\backslash\{\infty\}$ and a~path~$\widetilde{\gamma}$ from $\infty$ to $\widetilde{S}$ on $\widetilde{E}_1$ such that
\begin{gather}
u=\int_{\infty}^{\widetilde{S}}\bigg({-}\frac{{\rm d}\widetilde{X}}{2\widetilde{Y}}\bigg),\label{4.28.1234}
\end{gather}
where we take $\widetilde{\gamma}$ as the path of integration in the right hand side of~(\ref{4.28.1234}).
Let $\gamma$ be the image of $\widetilde{\gamma}$ with respect to the map $\widetilde{\xi}_1^{-1}\colon \widetilde{E}_1\to E_1$, which is the inverse mapping of the map $\widetilde{\xi}_1$.
Let $S=\widetilde{\xi}_1^{-1}\big(\widetilde{S}\big)$ and $S=(X,Y)$.
The pullback of the holomorphic one form $-{\rm d}\widetilde{X}/\big(2\widetilde{Y}\big)$ on $\widetilde{E}_1$ with respect to the map $\widetilde{\xi}_1$ is
\begin{gather*}
\sqrt{-1} \frac{\alpha-\beta}{\alpha\beta-1}\bigg({-}\frac{{\rm d}X}{2Y}\bigg).
\end{gather*}
Then we have
\begin{gather}
u=\sqrt{-1} \frac{\alpha-\beta}{\alpha\beta-1}\int_{\infty}^S\bigg({-}\frac{{\rm d}X}{2Y}\bigg),\label{4.28.123}
\end{gather}
where we take $\gamma$ as the path of integration in the right hand side of (\ref{4.28.123}).
From (\ref{4.28.1234}), we have
\begin{gather}
\widetilde{X}=\wp_{\widetilde{E}_1}(u).\label{6.6.1}
\end{gather}
From (\ref{4.28.123}), we have
\begin{gather}
X=\wp_{E_1}\bigg({-}\sqrt{-1} \frac{\alpha\beta-1}{\alpha-\beta}u\bigg).\label{4.28.a}
\end{gather}
From (\ref{4.28.12345}), we have
\begin{gather}
X=\frac{(\alpha-\beta)^2}{(\alpha\beta-1)^2}\big(1-\widetilde{X}\big).\label{6.6.2}
\end{gather}
From (\ref{6.6.1}), (\ref{4.28.a}), and (\ref{6.6.2}), we find that the relation (\ref{4.28.b}) holds for $u\in\mathbb{C}\backslash\widetilde{\Lambda}_1$.
Since $\mathbb{C}\backslash\widetilde{\Lambda}_1$ is an open set in $\mathbb{C}$, the relation (\ref{4.28.b}) holds for $u\in\mathbb{C}$.
By changing $\beta$ with $-\beta$ in (\ref{4.28.b}), we obtain (\ref{4.28.c}).
\end{proof}

{\sloppy
By raising the both sides of (\ref{4.29.111}), (\ref{4.29.112}), and (\ref{4.29.113}) to the second power and substitu\-ting~(\ref{4.29.114}), (\ref{4.29.115}), and (\ref{4.29.116}), we can express
$\wp_{\widetilde{E}_1}(\kappa_1w_1)$ and $\wp_{\widetilde{E}_2}(\kappa_2w_2)$ in terms of $\wp_{1,1}({\bf u})$, $\wp_{1,3}({\bf u})$, and~$\wp_{3,3}({\bf u})$. We~have
\begin{gather*}
\kappa_1w_1=\sqrt{-1}(\alpha-\beta)(u_1+\alpha\beta u_3),\qquad
\kappa_2w_2=\sqrt{-1}(\alpha+\beta)(u_1-\alpha\beta u_3).
\end{gather*}}
From Proposition \ref{4.29.117} and the fact that the Weierstrass elliptic functions $\wp_{E_1}$ and $\wp_{E_2}$ are the even functions, we have
\begin{gather*}
\wp_{E_1}\big((1-\alpha\beta)(u_1+\alpha\beta u_3)\big) =\frac{(\alpha-\beta)^2}{(\alpha\beta-1)^2}\big(1-\wp_{\widetilde{E}_1}(\kappa_1w_1)\big),
\\
\wp_{E_2}\big((\alpha\beta+1)(u_1-\alpha\beta u_3)\big) =\frac{(\alpha+\beta)^2}{(\alpha\beta+1)^2}\big(1-\wp_{\widetilde{E}_2}(\kappa_2w_2)\big).
\end{gather*}
Therefore, we can express $\wp_{E_1}\big((1-\alpha\beta)(u_1+\alpha\beta u_3)\big)$ and $\wp_{E_2}\big((\alpha\beta+1)(u_1-\alpha\beta u_3)\big)$ in terms of~$\wp_{1,1}({\bf u})$, $\wp_{1,3}({\bf u})$, and $\wp_{3,3}({\bf u})$.
These expressions are equal to (\ref{4.13.112}) and (\ref{4.13.333}). We~set ${\bf u}={\bf k}_1v$. Then we have $w_1=2h$ and $w_2=0$, where
\begin{gather*}
h=\frac{\sqrt{\big(1-\alpha^2\big)\big(1-\beta^2\big)}}{1-\alpha\beta}v.
\end{gather*}
Since $\mbox{sn}(0,\widetilde{\tau}_2)=0$ and $\mbox{cn}(0,\widetilde{\tau}_2)=\mbox{dn}(0,\widetilde{\tau}_2)=1$, we have
\begin{gather}
Z_1=0, \qquad
Z_2=\mbox{cn}(2h,\widetilde{\tau}_1),\qquad
Z_3=\mbox{dn}(2h,\widetilde{\tau}_1).
\label{4.29.222}
\end{gather}
By the addition formulae for the Jacobi elliptic functions, we have
\begin{gather}
\mbox{cn}(2h,\widetilde{\tau}_1) =\frac{\mbox{cn}(h,\widetilde{\tau}_1)^2-\mbox{sn}(h,\widetilde{\tau}_1)^2 \mbox{dn}(h,\widetilde{\tau}_1)^2}{1-\kappa_1^2\mbox{sn}(h,\widetilde{\tau}_1)^4},\label{4.29.333}
\\
\mbox{dn}(2h,\widetilde{\tau}_1) =\frac{\mbox{dn}(h,\widetilde{\tau}_1)^2-\kappa_1^2\mbox{sn}(h,\widetilde{\tau}_1)^2 \mbox{cn}(h,\widetilde{\tau}_1)^2}{1-\kappa_1^2\mbox{sn}(h,\widetilde{\tau}_1)^4}.\label{4.29.444}
\end{gather}
From Theorem \ref{4.29.999}, (\ref{4.29.222}), (\ref{4.29.333}), (\ref{4.29.444}), (\ref{4.29.114}), (\ref{4.29.115}), and (\ref{4.29.116}), we obtain
\begin{gather*}
\wp_{1,1}({\bf k}_1v)=\frac{\alpha^2\beta^2-2\alpha\beta\wp_{\widetilde{E}_1} (\kappa_1h)+1}{1-\wp_{\widetilde{E}_1}(\kappa_1h)},
\\
\wp_{1,3}({\bf k}_1v)=-\alpha^2\beta^2,
\\
\wp_{3,3}({\bf k}_1v)=\alpha^2\beta^2\big\{\alpha^2+\beta^2-(\alpha-\beta)^2 \wp_{\widetilde{E}_1}(\kappa_1h)\big\}.
\end{gather*}
From (\ref{4.28.b}), we obtain (\ref{3.23.1}) and (\ref{3.23.2}).
In the same way as $\wp_{j,k}({\bf k}_1v)$, we can obtain (\ref{3.24.1}) and~(\ref{3.24.2}) from Theorem \ref{4.29.999}.

\begin{Remark}
For $i=1,2$ and $j=1,2,3$, let ${\rm al}^{(i)}_j(u)$ be the Weierstrass elliptic al functions associated with $\widetilde{E}_i$, where the definitions of the Weierstrass elliptic al functions are written in Section~\ref{2021.11.20.1111}.
From (\ref{4.29.114}), (\ref{4.29.115}), and (\ref{4.29.116}), for $i=1,2$, we obtain
\begin{gather*}
{\rm sn}(w_i,\widetilde{\tau}_i)=\frac{1}{\kappa_i {\rm al}^{(i)}_3(\kappa_iw_i)},\qquad
{\rm cn}(w_i,\widetilde{\tau}_i)=\frac{{\rm al}^{(i)}_1(\kappa_iw_i)}{{\rm al}^{(i)}_3(\kappa_iw_i)},\qquad
{\rm dn}(w_i,\widetilde{\tau}_i)=\frac{{\rm al}^{(i)}_2(\kappa_iw_i)}{{\rm al}^{(i)}_3(\kappa_iw_i)}.
\end{gather*}
Let
\begin{gather*}
\mathcal{Z}_j={\rm al}^{(1)}_j(\kappa_1w_1){\rm al}^{(2)}_j(\kappa_2w_2),\qquad j=1,2,3.
\end{gather*}
From Theorem \ref{2021.11.20.AAA}, for ${\bf u}={}^t(u_1,u_3)\in\mathbb{C}^2$, the following relations hold:
\begin{gather*}
\mathcal{Z}_1=\frac{\alpha^2-\beta^2}{\big(1-\alpha^2\big)^2\big(1-\beta^2\big)^2} \frac{\big(1+\alpha^2\beta^2\big)\big(\wp_{1,3}({\bf u})-\alpha^2\beta^2\big)+\alpha^2\beta^2\wp_{1,1}({\bf u})+\wp_{3,3}({\bf u})}{\alpha^2\beta^2+\wp_{1,3}({\bf u})},
\\
\mathcal{Z}_2=\frac{\alpha^2-\beta^2}{\big(1-\alpha^2\big)^2\big(1-\beta^2\big)^2} \frac{\wp_{3,3}({\bf u})-\alpha^2\beta^2\wp_{1,1}({\bf u})}{\alpha^2\beta^2+\wp_{1,3}({\bf u})},
\\
\mathcal{Z}_3=\frac{\alpha^2-\beta^2}{\big(1-\alpha^2\big)^2\big(1-\beta^2\big)^2} \frac{\big(\alpha^2+\beta^2\big)\big(\wp_{1,3}({\bf u})-\alpha^2\beta^2\big)+\alpha^2\beta^2\wp_{1,1}({\bf u})+\wp_{3,3}({\bf u})}{\alpha^2\beta^2+\wp_{1,3}({\bf u})}.
\end{gather*}
Furthermore, from Theorem \ref{4.29.999}, for ${\bf u}={}^t(u_1,u_3)\in\mathbb{C}^2$, the following relations hold:
\begin{gather*}
\wp_{1,1}({\bf u})=\big(1-\alpha^2\big)\big(1-\beta^2\big)\frac{\big(1+\alpha^2\beta^2\big)\mathcal{Z}_3 -\big(1-\alpha^2\big)\big(1-\beta^2\big)\mathcal{Z}_2-\big(\alpha^2+\beta^2\big)\mathcal{Z}_1}{\big(1-\alpha^2\big)\big(1-\beta^2\big) (\mathcal{Z}_3-\mathcal{Z}_1)+\alpha^2-\beta^2},
\\
\wp_{1,3}({\bf u})=\alpha^2\beta^2\frac{\big(1-\alpha^2\big)\big(1-\beta^2\big)(\mathcal{Z}_1-\mathcal{Z}_3) +\alpha^2-\beta^2}{\big(1-\alpha^2\big)\big(1-\beta^2\big)(\mathcal{Z}_3-\mathcal{Z}_1)+\alpha^2-\beta^2},
\\
\wp_{3,3}({\bf u})=\alpha^2\beta^2\big(1-\alpha^2\big)\big(1-\beta^2\big)\frac{\big(1+\alpha^2\beta^2\big)\mathcal{Z}_3 +\big(1-\alpha^2\big)\big(1-\beta^2\big)\mathcal{Z}_2-\big(\alpha^2+\beta^2\big)\mathcal{Z}_1}{\big(1-\alpha^2\big)\big(1-\beta^2\big) (\mathcal{Z}_3-\mathcal{Z}_1)+\alpha^2-\beta^2}.
\end{gather*}
\end{Remark}

\appendix

\section{Proof of Theorem \ref{3.16}}

For the details of the theory of algebraic function fields, see \cite{St}.
Let $H$ be a hyperelliptic curve of genus 2, $W$ be an elliptic curve, and $\rho\colon H\to W$ be a morphism of degree 2.
Let $\mathcal{F}(H)$ and $\mathcal{F}(W)$ be the function fields of $H$ and $W$, respectively. The map $\rho$ induces the
injective homomorphism of fields $\rho^*\colon \mathcal{F}(W)\to\mathcal{F}(H)$.
Since the degree of the map $\rho$ is 2, the degree of the extension $[\mathcal{F}(H): \mathcal{F}(W)]$ is 2.
Therefore the extension $\mathcal{F}(W)\subset\mathcal{F}(H)$ is the Galois extension.
Let $g$ be the generator of the Galois group $\mbox{Gal}(\mathcal{F}(H)/\mathcal{F}(W))$.
The field $\mathcal{F}(H)$ has the unique subfield $\mathbb{C}(x)$ of genus 0 such that $[\mathcal{F}(H): \mathbb{C}(x)]=2$, where $x\in\mathcal{F}(H)$ is transcendental over $\mathbb{C}$ (cf.~\cite[Proposition~6.2.4]{St}).
Since the genus of $g(\mathbb{C}(x))$ is 0, we have $g(\mathbb{C}(x))=\mathbb{C}(x)$.
Therefore the map $g$ induces the automorphism of $\mathbb{C}(x)$ fixing $\mathbb{C}$.

\begin{Lemma}
We can choose $X\in\mathbb{C}(x)$ such that $\mathbb{C}(X)=\mathbb{C}(x)$ and $g(X)=-X$.
\end{Lemma}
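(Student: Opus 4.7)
The plan is to show that $g$ acts nontrivially on $\mathbb{C}(x)$, invoke the classification of automorphisms of $\mathbb{C}(x)/\mathbb{C}$ as Möbius transformations, and then normalize the fixed points of this involution to $0$ and $\infty$.

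First I would rule out that $g$ acts trivially on $\mathbb{C}(x)$. If it did, then $\mathbb{C}(x)$ would be contained in the fixed field $\mathcal{F}(W)$ of $g$. Since $[\mathcal{F}(H):\mathbb{C}(x)]=2$ and $[\mathcal{F}(H):\mathcal{F}(W)]=2$, multiplicativity of degrees in the tower $\mathbb{C}(x)\subset\mathcal{F}(W)\subset\mathcal{F}(H)$ would force $\mathcal{F}(W)=\mathbb{C}(x)$, but the former has genus $1$ and the latter has genus $0$, a contradiction. Hence $g|_{\mathbb{C}(x)}$ is a nontrivial automorphism, and since $g^2=\mathrm{id}$ it is an involution.

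Next, every $\mathbb{C}$-automorphism of $\mathbb{C}(x)$ is given by a Möbius transformation $x\mapsto\frac{ax+b}{cx+d}$ with $ad-bc\neq 0$. A non-identity Möbius transformation of finite order on $\mathbb{P}^1$ corresponds to a diagonalizable $\mathrm{PGL}_2(\mathbb{C})$ matrix and therefore has exactly two distinct fixed points $p,q\in\mathbb{P}^1$ (a parabolic transformation has infinite order). Let $p,q$ be the two fixed points of $g|_{\mathbb{C}(x)}$, and choose a Möbius transformation $\mu$ that sends $p$ to $0$ and $q$ to $\infty$; concretely, if $p,q$ are both finite, take
\[
X=\mu(x)=\frac{x-p}{x-q},
\]
while if, say, $q=\infty$, take $X=x-p$. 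Then $X\in\mathbb{C}(x)$ and $\mathbb{C}(X)=\mathbb{C}(x)$, since Möbius substitutions generate the full automorphism group of $\mathbb{C}(x)/\mathbb{C}$.

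Finally, the conjugated involution $X\mapsto \mu\circ g\circ \mu^{-1}(X)$ is an order-$2$ automorphism of $\mathbb{P}^1$ fixing both $0$ and $\infty$, hence of the form $X\mapsto cX$ with $c^2=1$; since $c\neq 1$ (nontriviality), we get $c=-1$, so $g(X)=-X$. The only delicate point in the argument is the non-triviality step above, because it is tempting to take for granted that $g$ moves $\mathbb{C}(x)$; once that is secured, the Möbius/fixed-point normalization is routine.
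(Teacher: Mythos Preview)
Your proof is correct and follows essentially the same approach as the paper: both rule out $g|_{\mathbb{C}(x)}=\mathrm{id}$ by the genus/degree argument, and both then diagonalize the involutive M\"obius transformation. The paper does this by explicitly computing the eigenvalues $\pm\sqrt{-1}$ of the matrix $A\in\mathrm{SL}_2(\mathbb{C})$ and conjugating by an eigenvector matrix $R$, whereas you phrase the same operation geometrically as sending the two fixed points of $g$ on $\mathbb{P}^1$ to $0$ and $\infty$; these are identical arguments in different clothing.
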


\begin{proof}
There exist $a,b,c,d\in\mathbb{C}$ such that $ad-bc=1$ and
\begin{gather*}
g(x)=\frac{ax+b}{cx+d}
\end{gather*}
(cf.~\cite[Theorem~1.1.1]{Sc}).
Let $A=\left(\begin{smallmatrix}a&b\\c&d\end{smallmatrix}\right)$.
Since $g^2$ is the identity mapping, we have $A^2=E$ or $A^2=-E$, where $E$ is the identity matrix.
By the Cayley--Hamilton theorem, we have
\begin{gather}
A^2=(a+d)A-E.\label{5.3.1}
\end{gather}
We assume $A^2=E$.
From (\ref{5.3.1}), we have $(a+d)A=2E$.
Thus we have $a+d\neq0$ and $A=\frac{2}{a+d}E$.
Therefore we have $g(x)=x$. Then we have $x\in\mathcal{F}(W)$, i.e., $\mathbb{C}(x)\subset\mathcal{F}(W)$.
Since the genus of $\mathbb{C}(x)$ is 0 and the genus of $\mathcal{F}(W)$ is 1, we have $\mathbb{C}(x)\neq\mathcal{F}(W)$.
Since $[\mathcal{F}(H): \mathcal{F}(W)]=2$, we have $[\mathcal{F}(H): \mathbb{C}(x)]>2$.
This contradicts $[\mathcal{F}(H): \mathbb{C}(x)]=2$.
Therefore we have $A^2=-E$. From (\ref{5.3.1}), we have $a+d=0$.
From $a+d=0$ and $ad-bc=1$, we find that the eigenvalues of $A$ are $\pm\sqrt{-1}$.
Therefore there exists a regular matrix $R$ such that
\begin{gather}
RA=\begin{pmatrix}-\sqrt{-1}&0\\0&\sqrt{-1}\end{pmatrix}R.\label{5.4.1}
\end{gather}
Let $R=\left(\begin{smallmatrix}r_1&r_2\\r_3&r_4\end{smallmatrix}\right)$ and
\begin{gather*}
X=\frac{r_1x+r_2}{r_3x+r_4}.
\end{gather*}
Then we have $\mathbb{C}(X)=\mathbb{C}(x)$ (cf.~\cite[Example~1.1.3]{Sc}).
Let $R'=RA$ and $R'=\left(\begin{smallmatrix}r_1'&r_2'\\r_3'&r_4'\end{smallmatrix}\right)$.
From~(\ref{5.4.1}), we have
\begin{gather*}
g(X)=\frac{r_1'x+r_2'}{r_3'x+r_4'}=\frac{-\sqrt{-1}r_1x-\sqrt{-1}r_2}{\sqrt{-1}r_3x+\sqrt{-1}r_4}=-X. \tag*{\qed}
\end{gather*}
\renewcommand{\qed}{}
\end{proof}

Since $\mathcal{F}(H)$ is an algebraic function field of genus 2, there exists $Y\in\mathcal{F}(H)$ such that $\mathcal{F}(H)=\mathbb{C}(X,Y)$ and
$Y^2=f(X)$ with a square-free polynomial $f(X)$ of degree 5 or 6 (cf.~\cite[Example~3.7.6]{St}). We~assume that the degree of $f(X)$ is 5.
Let $f(X)=(X-\gamma_1)\cdots(X-\gamma_5)$, where $\gamma_i\in\mathbb{C}$ for $1\le i\le5$ and $\gamma_i\neq\gamma_j$ for $i\neq j$.
A Weierstrass point of~$\mathcal{F}(H)$ is a place of $\mathcal{F}(H)$ which includes $X-\gamma_i$ for some $1\le i\le 5$ or $1/X$ (cf.~\cite[Corollary~I.49]{NG}).
Let $P_i$ be the place which includes $X-\gamma_i$.
Then $g(P_i)$ includes $X+\gamma_i$.
Since $g(P_i)$ is also a Weierstrass point of $\mathcal{F}(H)$ (cf.~\cite[Proposition~I.51]{NG}) and $g(P_i)$ does not include $1/X$, $g(P_i)$ includes $X-\gamma_j$ for some $1\le j\le 5$.
Therefore we have $-\gamma_i=\gamma_j$.
Thus the polynomial $f(X)$ has the following form:
\begin{gather*}
f(X)=X\big(X^2-\widetilde{\gamma}_1^2\big)\big(X^2-\widetilde{\gamma}_2^2\big),
\end{gather*}
where $\widetilde{\gamma}_1, \widetilde{\gamma}_2\in\mathbb{C}$, $\widetilde{\gamma}_1\widetilde{\gamma}_2\neq0$, and $\widetilde{\gamma}_1^2\neq\widetilde{\gamma}_2^2$.
From $Y^2=f(X)$, we have $g\big(Y^2\big)=-Y^2$. Thus we have $g(Y)=\sqrt{-1} Y$ or $g(Y)=-\sqrt{-1} Y$.
Then we have $g^2(Y)=-Y$, which contradicts that $g^2$ is the identity mapping.
Therefore the degree of $f(X)$ is 6.
Let $f(X)=(X-\gamma_1)\cdots(X-\gamma_6)$, where $\gamma_i\in\mathbb{C}$ for $1\le i\le6$ and $\gamma_i\neq\gamma_j$ for $i\neq j$.
A Weierstrass point of $\mathcal{F}(H)$ is a place of~$\mathcal{F}(H)$ which includes $X-\gamma_i$ for some $1\le i\le 6$ (cf.~\cite[Corollary~I.49]{NG}).
Let $P_i$ be the place which includes $X-\gamma_i$.
Then $g(P_i)$ includes $X+\gamma_i$.
Since $g(P_i)$ is also a Weierstrass point of~$\mathcal{F}(H)$ (cf.~\cite[Proposition~I.51]{NG}), $g(P_i)$ includes $X-\gamma_j$ for some $1\le j\le 6$.
Therefore we have $-\gamma_i=\gamma_j$.
Thus the polynomial $f(X)$ has the following form:
\begin{gather*}
f(X)=\big(X^2-\widetilde{\gamma}_1^2\big)\big(X^2-\widetilde{\gamma}_2^2\big)\big(X^2-\widetilde{\gamma}_3^2\big),
\end{gather*}
where $\widetilde{\gamma}_1, \widetilde{\gamma}_2, \widetilde{\gamma}_3\in\mathbb{C}$, $\widetilde{\gamma}_1\widetilde{\gamma}_2\widetilde{\gamma}_3\neq0$, and $\widetilde{\gamma}_i^2\neq\widetilde{\gamma}_j^2$ for $i\neq j$.
Let $\widetilde{H}$ be the hyperelliptic curve of genus 2 defined by
\begin{gather*}
\widetilde{H}=\big\{(X,Y)\in\mathbb{C}^2\mid Y^2=f(X)\big\}.
\end{gather*}
Since the function field of $\widetilde{H}$ is $\mathcal{F}(H)$, the curve $H$ is birationally equivalent to the curve $\widetilde{H}$.
Since the curves $H$ and $\widetilde{H}$ are smooth, the curve $H$ is isomorphic to the curve $\widetilde{H}$.
The curve $\widetilde{H}$ is isomorphic to the curve
\begin{gather*}
H'=\big\{(s,t)\in\mathbb{C}^2\mid t^2=\big(s^2-1\big)\big(s^2-e_1^2\big)\big(s^2-e_2^2\big)\big\},
\end{gather*}
where $e_1=\widetilde{\gamma}_2/\widetilde{\gamma}_1$ and $e_2=\widetilde{\gamma}_3/\widetilde{\gamma}_1$, by the morphism
\begin{gather*}
\widetilde{\xi}\colon\quad \widetilde{H}\to H',\qquad (X,Y)\mapsto\big(X/\widetilde{\gamma}_1, Y/\widetilde{\gamma}_1^3\big).
\end{gather*}
Thus the curve $H$ is isomorphic to the curve $H'$. Next, we consider the curve $H'$ defined by~(\ref{3.12.2}).
Let $W$ be the elliptic curve defined by
\begin{gather*}
W=\big\{(s,t)\in\mathbb{C}^2\mid t^2=(s-1)\big(s-e_1^2\big)\big(s-e_2^2\big)\big\}.
\end{gather*}
Then we can define the morphism of degree 2
\begin{gather*}
H'\to W\colon\quad (s,t)\mapsto\big(s^2,t\big).
\end{gather*}
Therefore we obtain the statement of the theorem.

\subsection*{Acknowledgements}

The authors would like to thank the referees for reading our
manuscript carefully and giving the useful comments. The work of Takanori Ayano was supported by JSPS KAKENHI Grant Number JP21K03296 and was partly supported by Osaka City University Advanced Mathema\-ti\-cal Institute (MEXT Joint Usage/Research Center on Mathematics and Theoretical Physics JPMXP0619217849).

\pdfbookmark[1]{References}{ref}
\LastPageEnding

\end{document}